\newcommand{\beq}{\begin{equation}}
\newcommand{\eeq}{\end{equation}}
\newtheorem{theorem}{Theorem}[section]
\newtheorem{lemma}[theorem]{Lemma}
\newtheorem{corollary}[theorem]{Corollary}
\newtheorem{proposition}[theorem]{Proposition}
\newtheorem{remark}[theorem]{Remark}
\newtheorem{Assumptions}[theorem]{Assumption}
\newcommand{\RR}{ \mathbb{R}}
\newcommand{\PP}{ \mathbb{P}}
\def\theequation{\arabic{section}.\arabic{equation}}
\def \ep{\hbox{ }\hfill$\Box$}
\begin{document} 
\title{\textbf{On an Optimal Extraction Problem with Regime Switching}\footnote{Financial support by the German Research Foundation (DFG) through the Collaborative Research Centre 1283 ``Taming uncertainty and profiting from randomness and low regularity in analysis, stochastics and their applications'' is gratefully acknowledged by the first author. The secod named author thanks the financial support by the China Scholarship Council (CSC).}}
\author{Giorgio Ferrari\thanks{Corresponding author. Center for Mathematical Economics, Bielefeld University, Germany; \texttt{giorgio.ferrari@uni-bielefeld.de}}
\and Shuzhen Yang \thanks{Institution of Financial Studies, Shandong University, P.R.C.; \texttt{yangsz@sdu.edu.cn}}}
\date{\today}
\maketitle

\vspace{0.5cm}

{\textbf{Abstract.}} This paper studies a finite-fuel two-dimensional degenerate singular stochastic control problem under regime switching that is motivated by the optimal irreversible extraction problem of an exhaustible commodity. A company extracts a natural resource from a reserve with finite capacity, and sells it in the market at a spot price that evolves according to a Brownian motion with volatility modulated by a two-state Markov chain. In this setting, the company aims at finding the extraction rule that maximizes its expected discounted cash flow, net of the costs of extraction and maintenance of the reserve. We provide expressions both for the value function and for the optimal control. On the one hand, if the running cost for the maintenance of the reserve is a convex function of the reserve level, the optimal extraction rule prescribes a Skorokhod reflection of the (optimally) controlled state process at a certain state and price dependent threshold. On the other hand, in presence of a concave running cost function it is optimal to instantaneously deplete the reserve at the time at which the commodity's price exceeds an endogenously determined critical level. In both cases, the threshold triggering the optimal control is given in terms of the optimal stopping boundary of an auxiliary family of perpetual optimal selling problems with regime switching. 
\smallskip

{\textbf{Key words}}:
singular stochastic control, optimal stopping, regime switching, Hamilton-Jacobi-Bellman equation, free boundary, commodity extraction, optimal selling.

\smallskip

{\textbf{MSC2010 subsject classification}}: 93E20, 60G40, 49L20, 60J27, 91G80, 91B76

\smallskip

{\textbf{JEL classification}}: C61, Q32, G11

\section{Introduction}
\label{introduction}

Since the seminal work \cite{BrennanSchwartz}, both the literature in Applied Mathematics and that in Economics have seen numerous papers on optimal extraction problems of non-renewable resources under uncertainty. Some of these works formulate the extraction problem as an optimal timing problem (see, e.g., \cite{DixitPindyck}, \cite{Trigeorgis} and references therein); some as a combined absolutely continuous/impulse stochastic control problem (e.g., \cite{BrekkeOksendal} and \cite{LumleyZervos}); and some others as a stochastic optimal control problem only with classical absolutely continuous controls (cf.\ \cite{Insley1} and \cite{Feliz}, among many others), but with commodity price dynamics possibly described by a Markov regime switching model (cf., e.g., \cite{Insley2}). The latter kind of dynamics, firstly introduced in \cite{Hamilton}, may indeed help to explain boom and bust periods of commodity prices in terms of different regimes in a unique stochastic process.

In this paper we provide the solution to a stochastic irreversible extraction problem in presence of regime shifts in the underlying commodity spot price process. The problem we have in mind is that of a company extracting continuously in time a commodity from a reserve with finite capacity, and selling the natural resource in the spot market. The reserve level can be decreased at any time at a given proportional cost, following extraction policies which do not need to be rates. Moreover, the company faces a running cost (e.g.\ a cost for the reserve's maintenance) that is dependent on the reserve level. The company aims at finding the extraction rule that maximizes the expected discounted net cash flow in presence of market uncertainty and macroeconomic cycles. The latter are described through regime shifts in the volatility of the commodity spot price dynamics.

We set up the optimal extraction problem as a finite-fuel two-dimensional degenerate singular stochastic control problem under Markov regime switching. It is two-dimensional because for any regime $i$ the state variable consists of the value of the spot price, $x$, and the level of the reserve, $y$. It is a problem of singular stochastic control with finite fuel since extraction does not need to be performed at rates, and the commodity reserve has a finite capacity. Finally, it is degenerate since the state variable describing the level of the reserve is purely controlled, and does not have any diffusive component. 

While the literature on optimal stopping problems under regime switching is relatively rich (see, e.g., \cite{Bensoussanetal}, \cite{Buffington}, \cite{Guo01}, \cite{GuoZhang}, \cite{ZhangZhu}, among others), that on singular stochastic control problems with regime switching is still limited. We refer, e.g., to \cite{Pistorius}, \cite{Jiang}, \cite{CadSot} and \cite{ZhuYang} where the optimal dividend problem of actuarial science is formulated as a one-dimensional problem under Markov regime switching. If we then further restrict our attention to singular stochastic control problems with a two-dimensional state space and regime shifts, to the best of our knowledge \cite{Guoetal} is the only other paper available in the literature. That work addresses an optimal irreversible investment problem in which the growth and the volatility of the decision variable jump between two states at independent exponentially distributed random times. However, although in \cite{Guoetal} the authors provide a detailed discussion on the structure of the candidate solution and on the economic implications of regime switching for capital accumulation and growth, they do not confirm their guess by a verification theorem.



In this paper, with the aim of a complete analytical study, we assume that the commodity spot price $X$ evolves according to a Bachelier model\footnote{The choice of an arithmetic dynamics might be justified also at the modeling stage. Indeed, it has been shown in \cite{Geman} that for certain commodities an arithmetic dynamics fits historical time series better than a mean-reverting one. Moreover, it has been recently observed that some commodities can be traded at negative prices (see \cite{Fenton}). This happened, e.g., to propane prices in Edmonton (Canada) in June 2015.} with regime switching between two states. We show that the optimal extraction rule is of threshold type, and we provide the expression of the value function. 

The Hamilton-Jacobi-Bellman (HJB) equation associated to the optimal extraction problem takes the form of a system of two coupled variational inequalities with state dependent gradient constraints. The coupling is through the transition rates of the underlying continuous-time Markov chain $\varepsilon$, and it makes the problem of finding an explicit solution much harder than in the standard case without regime switching. We associate to the singular control problem a family of auxiliary optimal stopping problems for the Markov process $(X,\varepsilon)$. Such family is parametrized through the initial reserve level $y$. We solve the related free-boundary problem, and we characterize the geometry of stopping and continuation regions. As it is usual in optimal stopping theory, we show that the first time at which the underlying process leaves the continuation region is an optimal stopping rule. For any given and fixed $y$, such time takes the form of the first hitting time of $X$ to a regime dependent boundary $x^*_{i}(y)$, $i=1,2$. These boundaries are the unique solutions to a system of nonlinear algebraic equations derived by imposing the smooth-fit principle. 

Under the assumption that the running cost function is either strictly convex or concave in the reserve level, we show that the value function of the optimal extraction problem is given in terms of the value function of the auxiliary (family of) optimal stopping problems. Moreover, we prove that the optimal extraction policy is triggered by the optimal stopping boundaries $x^*_{i}(y)$, $i=1,2$. However, the behavior of the optimal control, and the regularity of the value function, significantly change when passing from a strictly convex running cost to a concave one.

On the one hand, if the running cost is a strictly convex function of the reserve level, we show that the optimal extraction policy keeps at any time the optimally controlled reserve level below a certain critical value $b^*$ with minimal effort, i.e.\ according to a \emph{Skorokhod reflection}. Such threshold depends on the spot price and on the market regime, and it is the inverse of the optimal stopping boundary $x^*_{i}(\,\cdot\,)$ previously determined. Also, we prove that, for any regime $i=1,2$, the value function of the optimal extraction problem is a $C^{2,1}$-solution to the associated HJB equation, and it is given as the integral, with respect to the controlled state variable, of the value function of the auxiliary optimal stopping problem. 

On the other hand, if the running cost is a concave function of the reserve level, the optimal extraction rule prescribes the instantaneous depletion of the reserve at the time at which the commodity's price in regime $i=1,2$ exceeds the critical level $x^*_{i}(y)$. As a consequence of such \emph{bang-bang} nature of the optimal policy - not extract or extract all - for any regime $i=1,2$ the value function only belongs to the class $C^0(\mathbb{R} \times [0,1]) \cap C^{1,1}(\mathbb{R} \times (0,1])$, with second order derivative with respect to $x$ that is bounded on any compact subset of $\mathbb{R} \times (0,1]$.

Although optimal controls of reflecting and bang-bang type already appeared in the literature on two-dimensional degenerate singular stochastic control problems (see, e.g., the recent \cite{DeAFeMo15}, \cite{DeAFeMo15bis} and references therein), to the best of our knowledge this is the first paper in which these two different behaviors of the optimal control arise in a model with Markov regime switching.

The study of the auxiliary family of optimal stopping problems performed in this paper is of interest on its own as well. Each stopping problem takes indeed the form of a perpetual optimal selling problem under regime switching that we completely solve. It is worth noticing that most of the papers dealing with optimal stopping problems with regime switching, and following a guess and verify approach, assume existence of a solution to the smooth-fit equations and additional properties of the candidate value function in order to perform a verification theorem (see, e.g., Theorem 3.1 in \cite{GuoZhang}, and Theorems 3 and 5 in \cite{ZhangZhu}). An abstract and nonconstructive approach, based on a thorough analysis of the related variational inequality, is adopted in \cite{Bensoussanetal}. Here, instead, we construct a solution to the free-boundary problem, and we then prove all the properties needed to verify that such solution is actually the value function of our optimal stopping problem with regime switching (see our Theorems \ref{candidate-w} and \ref{thm:verifying} below). We believe that also such a result represents an interesting contribution to the literature.

Although not solvable in closed form, the system of nonlinear algebraic equations characterizing the optimal stopping boundaries - hence the optimal extraction policy - can be easily solved numerically. This fact allows us to compare the optimal extraction boundaries in the case with and without regime switching, and thus to draw interesting economic conclusions (see Section \ref{comparison}). 
In particular, we show that in presence of macroeconomic cycles, the company is more reluctant (resp.\ favourable) to extract and then sell the commodity, relative to the case in which the market were always in the good (resp.\ bad) regime with the lowest (resp.\ highest) volatility. 


The rest of the paper is organized as follows. In Section \ref{sec:problem} we formulate the optimal extraction problem, we introduce the associated HJB equation, and we discuss the solution approach. The family of optimal stopping problems is then solved in Section \ref{sec:OS}, whereas the optimal control is provided in Section \ref{sec:solSSC}. A comparison with the optimal extraction rule that one would find in the no-regime-switching case, as well as some economic conclusions, are contained in Section \ref{comparison}. Appendix \ref{someproofs} collects the proofs of some results of Section \ref{sec:OS}, whereas in Appendix \ref{app} one can find auxiliary results needed in the paper.


\section{Problem Formulation and Solution Approach}
\label{sec:problem}

\subsection{The Optimal Extraction Problem}

Let $(\Omega,\mathcal{F}, \mathbb{P})$ be a complete probability space, rich enough to accommodate a one-dimensional Brownian motion $\{W_t, t\geq 0\}$ and a continuous-time Markov chain $\{\varepsilon_t, t\geq 0\}$ with state space $E:=\{1,2\}$, and with irreducible generator matrix 
\begin{equation}
\label{Lambdamatrix} 
Q:=
\begin{pmatrix}
-\lambda_1 & \lambda_1 \\
\lambda_2 & -\lambda_2 
\end{pmatrix},
\end{equation}
for some $\lambda_1, \lambda_2 > 0$. The Markov chain $\varepsilon$ jumps between the two states at exponentially distributed random times, and the constant $\lambda_i$ gives the rate of leaving state $i=1,2$. We take $\varepsilon$ independent of $W$ and denote by $\mathbb{F}:=\{\mathcal{F}_t, t\geq 0\}$ the filtration jointly generated by $W$ and $\varepsilon$, as usual augmented by $\mathbb{P}$-null sets.

We assume that the spot price of the commodity evolves according to a Bachelier model \cite{Bachelier} with regime switching; i.e.\
\beq
\label{dyn:X}
dX_t = \sigma_{\varepsilon_t} dW_t, \quad t>0, \qquad X_0=x \in \mathbb{R},
\eeq
where for every state $i=1,2$ $\sigma_i > 0$ is a known finite constant. From the modeling point of view, the choice of an arithmetic dynamics might be justified by noticing that certain commodities can be traded at negative spot prices (see, e.g., \cite{Fenton}), and do not show a mean-reverting behavior (cf.\ \cite{Geman}, among others).

$(X,\varepsilon)$ is a strong Markov process (see \cite{ZhuYin}, Remark 3.11) and we set $\mathbb{P}_{(x,i)}(\,\cdot\,):=\mathbb{P}(\,\cdot\,| X_0=x, \varepsilon_0=i)$, and we denote by $\mathbb{E}_{(x,i)}$ the corresponding expectation operator. From Section $3.1$ in \cite{ZhuYin} we also know that $(X,\varepsilon)$ is regular, in the sense that the sequence of stopping times $\{\beta_n, n\in \mathbb{N}\}$, with $\beta_n:=\inf\{t\geq 0: |X_t| =n\}$, is such that $\lim_{n\uparrow \infty} \beta_n = +\infty$, $\mathbb{P}_{(x,i)}$-a.s.

The level of the commodity reserve satisfies
\beq
\label{dyn:Y}
dY^{\nu}_t = -d\nu_t, \quad t>0, \qquad Y^{\nu}_0 = y \in [0,1].
\eeq
Taking $y \leq 1$ we model the fact that the reserve has a finite capacity, normalized to $1$ without loss of generality. Here $\nu_t$ represents the cumulative amount of commodity extracted up to time $t\geq 0$. We say that an extraction policy is admissible if, given $y \in [0,1]$, it belongs to the nonempty convex set
\begin{eqnarray}
\label{admissiblecontrols}
\mathcal{A}_y \hspace{-0.2cm}&:=&\hspace{-0.2cm} \{\nu:\Omega \times \mathbb{R}_{+} \mapsto  \mathbb{R}_{+}, ({\nu_{t}(\omega) := \nu(\omega,t)})_{t \geq 0}\mbox{ is nondecreasing,\,\,left-continuous,} \nonumber \\
&& \hspace{3cm} \mathbb{F}-\mbox{adapted with} \,\,y- \nu_t\geq 0\,\,\,\forall \; t \geq 0,\,\,\nu_0=0\,\,\,\,\PP-\mbox{a.s.}\}. 
\end{eqnarray}
Moreover, we let $\mathbb{P}_{(x,y,i)}(\,\cdot\,):=\mathbb{P}(\,\cdot\,| X_0=x, Y_0=y, \varepsilon_0=i)$ and $\mathbb{E}_{(x,y,i)}$ the corresponding expectation operator.

While extracting, the company faces two types of costs: the first one is an extraction cost that we take proportional through a constant $c>0$ to the amount of commodity extracted; the second one is a running cost, e.g.\ an holding cost for the maintenance of the reserve. The latter is measured by a function $f$ of the reserve level satisfying the following assumption.
\begin{Assumptions}
\label{Asscost}
$f:\mathbb{R} \to \mathbb{R}_+$ is increasing, continuous on $[0,1]$ and such that $f(0)=0$. Moreover, one of the following two conditions is satisfied:
\begin{itemize}
\item[(I)] $y \mapsto f(y)$ is strictly convex and continuously differentiable on $[0,1]$; 
\item[(II)] $y \mapsto f(y)$ is concave on $[0,1]$ and continuously differentiable on $(0,1]$.
\end{itemize}
\end{Assumptions}
\noindent Assumption \ref{Asscost} will be standing throughout this paper.

\begin{remark}
\begin{enumerate}\hspace{10cm}
\item From an economic point of view, a running cost function that is concave on $[0,1]$ reflects \rm{economies of scale} in the size of the operation. On the other hand, a running cost function convex on $[0,1]$ seems to be more appropriate for a company facing \rm{diseconomies of scale}. 
\item The requirement $f(0)=0$ is without loss of generality, since if $f(0)=f_o>0$ then one can always set $\hat{f}(y):=f(y)-f_o$ and write $f(y)=\hat{f}(y)+f_o$, so that the firms's optimization problem (cf.\ \eqref{value} below) remains unchanged up to an additive constant.
\item Cost functions of the form $f(y)=\alpha_o y^2 + \beta_o y$ for some $\alpha_o,\beta_o>0$, $f(y)=y^{\gamma_o}$, for some $\gamma_o \in (0,1)$, or $f(y)=\alpha y$ for $\alpha > 0$, clearly meet Assumption \ref{Asscost}.
\end{enumerate}
\end{remark}

Following an extraction policy $\nu \in \mathcal{A}_y$ and selling the extracted amount in the spot market at price $X$, the expected discounted cash flow of the company, net of extraction and maintenance costs, is
\beq
\label{functional}
\mathcal{J}_{x,y,i}(\nu):=\mathbb{E}_{(x,y,i)}\bigg[\int_0^{\infty} e^{-\rho t} \big(X_t - c\big)d\nu_t - \int_0^{\infty} e^{-\rho t}f(Y^{\nu}_t)dt\bigg],\qquad (x,y,i) \in \mathcal{O},
\eeq
where $\rho > 0$ is a given discount factor and $\mathcal{O}:=\mathbb{R}\times [0,1] \times \{1,2\}$. Throughout this paper, for $t > 0$ and $\nu \in \mathcal{A}_y$ we will make use of the notation $\int_0^{t} e^{-\rho s}(X_s-c) d\nu_s$ to indicate the Stieltjes integral $\int_{[0,t)} e^{-\rho s} (X_s-c) d\nu_s$ with respect to $\nu$. As a byproduct of Lemma \ref{lemma:UI} in Appendix \ref{app}, the functional \eqref{functional} is well-defined and finite for any $\nu \in \mathcal{A}_y$.

The company aims at choosing an admissible extraction rule that maximizes \eqref{functional}; that is, it faces the optimization problem
\beq
\label{value}
V(x,y,i) := \sup_{\nu \in \mathcal{A}_y}\mathcal{J}_{x,y,i}(\nu), \qquad (x,y,i) \in \mathcal{O}.
\eeq

\noindent Notice that if $y=0$ then no control can be exerted, i.e.\ $\mathcal{A}_0 = \{\nu\equiv 0\}$, and therefore $V(x,0,i)= \mathcal{J}_{x,0,i}(0) =0$, for any $(x,i) \in \mathbb{R} \times \{1,2\}$.

Problem \eqref{value} falls into the class of singular stochastic control problems, i.e.\ problems in which admissible controls do not need to be absolutely continuous with respect to the Lebesgue measure, as functions of time (see \cite{Shreve88} and Chapter VIII in \cite{FlemingSoner} for an introduction). In particular, it is a finite-fuel two-dimensional degenerate singular stochastic control problem under Markov regime switching. It is degenerate because the state process $Y$ is purely controlled, and does not have a diffusive component. Moreover, it is of finite-fuel type since the controls stay bounded.

\begin{remark}
\label{rem:concaveconvex}\hspace{10cm}
\begin{enumerate}
\item In the literature on optimal extraction it is common to consider the problem of a company maximizing the total expected profits, net of the total expected costs of extraction (see \cite{Insley2} and \cite{Pindyck81}, among others); that is, (in our formulation) maximizing $\mathbb{E}[\int_0^{\infty} e^{-\rho t} (X_t - c)d\nu_t]$. In \eqref{functional} we have also the term $\mathbb{E}[\int_0^{\infty} e^{-\rho t}f(Y^{\nu}_t)dt]$ in order to account for the possible running costs incurred by the company, e.g., for the maintenance of the reserve. However, as it is discussed in Remark \ref{linearcase}, our results carry over to the case $f\equiv 0$ as well.
\item Due to the convexity of $\mathcal{A}_y$, and the linearity of $\nu \mapsto Y^{\nu}$, if $y \mapsto f(y)$ is strictly convex on $[0,1]$, then the functional $\mathcal{J}_{x,y,i}(\,\cdot\,)$ is strictly concave on $\mathcal{A}_y$, and \eqref{value} is a well-posed maximization problem of a concave functional. On the other hand, if $y \mapsto f(y)$ is concave on $[0,1]$, then $\mathcal{J}_{x,y,i}(\,\cdot\,)$ is convex on $\mathcal{A}_y$. We will see in Section \ref{sec:solSSC} how the convexity/concavity of $f$ will impact on the behavior of the optimal control, and on the regularity of the value function.
\end{enumerate}
\end{remark}

\begin{remark}
\label{rem:priceimpact}
Since the extraction rule adopted by the company does not affect the price of the commodity, our model takes into consideration a price-taker company. Allowing for a direct instantaneous effect of the extraction policy on the price dynamics, our problem would share a similar mathematical structure with the problem of optimal execution in algorithm trading, where an investor sells a large number of stock shares over a given time horizon and her actions have impact on the stock price (see, e.g., \cite{GuoZervos} for a recent formulation of the optimal execution problem involving singular controls).
We leave the analysis of the optimal extraction problem with price impact as an interesting future research topic.
\end{remark}


\subsection{The Hamilton-Jacobi-Bellman Equation and a First Verification Theorem}
\label{sec:HJB}

In light of classical results in stochastic control (see, e.g., Chapter VIII in \cite{FlemingSoner}), we expect that for any $i=1,2$ the value function $V(\cdot,\cdot,i)$ suitably satisfies the Hamilton-Jacobi-Bellman (HJB) equation
\beq
\label{HJB}
\max\Big\{\big(\mathcal{G} - \rho\big)U(x,y,i) - f(y), (x-c) - U_y(x,y,i)\Big\} =0,
\eeq
for $(x,y) \in \mathbb{R} \times (0,1]$ and with boundary condition $U(x,0,i)=0$. 
Here $\mathcal{G}$ is the infinitesimal generator of $(X,\varepsilon)$. It acts on functions $h: \mathbb{R} \times \{1,2\} \to \mathbb{R}$ with $h(\cdot, i) \in C^2(\mathbb{R})$ for any given and fixed $i=1,2$ as
\beq
\label{generator}
\mathcal{G} h(x,i):= \frac{1}{2}\sigma^2_i h_{xx}(x,i) + \lambda_i \big(h(x,3-i) - h(x,i)\big).
\eeq
It is worth noting that, due to \eqref{generator}, equation \eqref{HJB} is actually a system of two variational inequalities with state-dependent gradient constraints, coupled through the transition rates $\lambda_1,\lambda_2$. The next preliminary verification result shows that any suitable solution to \eqref{HJB} provides an upper bound for the value function $V$.

\begin{theorem}
\label{1stverification}
For $i=1,2$, let $U(\cdot, \cdot, i) \in C^{1,1}(\mathbb{R} \times (0,1))$ be such that $U_{xx}(\cdot,\cdot,i) \in L^{\infty}_{loc}(\mathbb{R} \times (0,1))$, $U(x,0,i)=0$, $x \in \mathbb{R}$, and $|U(x,y,i)| \leq K(1+|x|)$, for any $(x,y) \in \mathbb{R} \times [0,1]$ and for some $K>0$. Then if $U$ solves \eqref{HJB} in the a.e.\ sense, one has $U \geq V$ on $\mathcal{O}$. 
\end{theorem}
\begin{proof}
Fix $(x,y,i)\in \mathcal{O}$, and take arbitrary $R>0$ and $T>0$. Set $\tau_{R}:=\inf\big\{t\ge 0\,:\,X_t\notin(-R,R)\big\}$, and let $0 \leq \eta_1 < \eta_2 < ... < \eta_{N} \leq \tau_R \wedge T$ be the random times of jumps of $\varepsilon$ in the interval $[0,\tau_R \wedge T)$ (clearly, the number $N$ of those jumps is random as well). Notice that by the regularity of $U$ we can approximate $U$ (uniformly on compact subsets of $\mathbb{R}\times (0,1)$) by a sequence of functions $\{U^{(m)}\}_{m\geq 1}$ such that $U^{(m)}(\cdot,\cdot,i)\in C^{\infty,1}(\mathbb{R} \times (0,1))$ for any $i=1,2$ (see, e.g., part (a) of the proof of Theorem 4.1 in Ch.\ VIII of \cite{FlemingSoner}, or the proof of Theorem 2.7.9 in \cite{KS-MF} for this kind of procedure). Then pick an admissible control $\nu$ and apply It\^o-Meyer's formula for semimartingales (\cite{Meyer}, pp.\ 278--301) to the process $(e^{-\rho t} U^{(m)}(X_{t},{Y}_{t}^{\nu}, \varepsilon_{t}))_{t\geq0}$ on each of the intervals $[0, \eta_1)$, $(\eta_1,\eta_2)$,...,$(\eta_N,\tau_R \wedge T)$. Piecing together all the terms as in the proof of Lemma 3 at p.\ 104 of \cite{Sk} (see also Lemma 2.4 in \cite{YinXi} for a similar idea of proof), and finally taking limits as $m\uparrow \infty$ one finds

\begin{align*}
U(x,y,i) =& \mathbb{E}_{(x,y,i)}\bigg[e^{-\rho(\tau_{R}\wedge T)}U(X_{\tau_{R}\wedge T}, {Y}_{\tau_{R}\wedge T}^{\nu}, \varepsilon_{\tau_{R}\wedge T})-\int_0^{\tau_{R}\wedge T}e^{-\rho s}(\mathcal{G}-\rho)
U(X,{Y}^{\nu}_s, \varepsilon_s)ds\bigg] \\
&  + \mathbb{E}_{(x,y,i)}\bigg[\int_0^{\tau_{R}\wedge T}e^{-\rho s}U_y(X_s,{Y}^{\nu}_s, \varepsilon_s)d\nu_s\bigg]
\\ &  - \mathbb{E}_{(x,y,i)}\Big[\sum_{0\leq s < \tau_{R}\wedge T}e^{-\rho s}
\left(U(X_s,{Y}^{\nu}_{s+}, \varepsilon_{s})-U(X_s,{Y}^{\nu}_s,\varepsilon_{s})-U_y(X_s,{Y}^{\nu}_s, \varepsilon_s)\Delta Y_s\right)\Big],
\end{align*}
where $\Delta Y_s := Y_{s+} - Y_s = -\Delta \nu_s := -(\nu_{s+} - \nu_s)$, and the expectation of the stochastic integral vanishes since $U_x$ is bounded on $(x,y,i)\in[-R,R]\times[0,1] \times \{1,2\}$.

Now, noticing that any admissible control $\nu$ can be written as the sum of its continuous part and of its pure jump part, i.e.\ $d\nu=d\nu^{cont}+ \Delta \nu$, one has 
\begin{align*}
U(x,y,i) = & \mathbb{E}_{(x,y,i)}\bigg[e^{-\rho(\tau_{R}\wedge T)}U(X_{\tau_{R}\wedge T}, Y_{\tau_{R}\wedge T}^{\nu}, \varepsilon_{\tau_{R}\wedge T}) - \int_0^{\tau_{R}\wedge T}e^{-\rho s}(\mathcal{G}-\rho)U(X_s,Y^{\nu}_s, \varepsilon_s)ds\bigg] \\
&  + \mathbb{E}_{(x,y,i)}\bigg[\int_0^{\tau_{R}\wedge T}e^{-\rho s}U_y(X_s,Y^{\nu}_s, \varepsilon_s)d\nu_s^{cont}\bigg] \\
& - \mathbb{E}_{(x,y,i)}\Big[\sum_{0\le s < \tau_{R}\wedge T}e^{-\rho s}
\left(U(X_s,Y^{\nu}_{s+}, \varepsilon_{s})-U(X_s,Y^{\nu}_{s}, \varepsilon_{s})\right)\Big].
\end{align*}
Because
\begin{equation}
\label{jump}
U(X_s,Y^{\nu}_{s+},\varepsilon_{s})-U(X_s, Y^{\nu}_{s},\varepsilon_{s})=
-\int_0^{\Delta \nu_s} U_y(X_s,Y^{\nu}_{s} - z , \varepsilon_s)dz,
\end{equation}
and since $U$ satisfies the HJB equation \eqref{HJB}, one obtains
\begin{align}
\label{verif04}
U(x,y,i) \geq &\mathbb{E}_{(x,y,i)}\left[e^{-\rho(\tau_{R}\wedge T)}U(X_{\tau_{R}\wedge T}, Y_{\tau_{R}\wedge T}^{\nu}, \varepsilon_{\tau_{R}\wedge T})\right]-\mathbb{E}_{(x,y,i)}\bigg[\int_0^{\tau_{R}\wedge T}e^{-\rho s} f(Y^{\nu}_s)ds\bigg]\nonumber \\
& + \mathbb{E}_{(x,y,i)}\bigg[\int_0^{\tau_{R}\wedge T}e^{-\rho s} (X_s-c) d\nu_s^{cont}\bigg]
 + \mathbb{E}_{(x,y,i)}\Big[\sum_{0\leq s < \tau_{R}\wedge T}e^{-\rho s}
(X_s-c) \Delta \nu_s \Big] \nonumber  \\
 = &\mathbb{E}_{(x,y,i)}\bigg[e^{-\rho(\tau_{R}\wedge T)}U(X_{\tau_{R}\wedge T}, Y_{\tau_{R}\wedge T}^{\nu}, \varepsilon_{\tau_{R}\wedge T}) + \int_0^{\tau_{R}\wedge T}e^{-\rho s}(X_s-c) d\nu_s\bigg] \\
 & - \mathbb{E}_{(x,y,i)}\bigg[\int_0^{\tau_{R}\wedge T}e^{-\rho s}f(Y^{\nu}_s)ds \bigg].\nonumber
\end{align}

By H\"older's inequality, \eqref{dyn:X}, and It\^o's isometry we have
\begin{eqnarray*}
\mathbb{E}_{(x,y,i)}\Big[e^{-\rho(\tau_{R}\wedge T)}|X_{\tau_{R}\wedge T}|\Big] & \hspace{-0.25cm} \leq \hspace{-0.25cm} & \mathbb{E}_{(x,y,i)}\Big[e^{-2\rho(\tau_{R}\wedge T)}\Big]^{\frac{1}{2}}\mathbb{E}_{(x,y,i)}\Big[|X_{\tau_{R}\wedge T}|^2\Big]^{\frac{1}{2}} \nonumber \\
& \hspace{-0.25cm} \leq \hspace{-0.25cm} & \sqrt{2}\mathbb{E}_{(x,y,i)}\Big[e^{-2\rho(\tau_{R}\wedge T)}\Big]^{\frac{1}{2}}\bigg(|x|^2 + \mathbb{E}_{(x,y,i)}\bigg[\Big|\int_0^{\tau_{R}\wedge T} \sigma_{\varepsilon_u} dW_u\Big|^2\bigg]\bigg)^{\frac{1}{2}} \nonumber \\
& \hspace{-0.25cm} \leq \hspace{-0.25cm} &  \sqrt{2}\mathbb{E}_{(x,y,i)}\Big[e^{-2\rho(\tau_{R}\wedge T)}\Big]^{\frac{1}{2}}\Big(|x|^2 + (\sigma_1^2 \vee \sigma_2^2)T\Big)^{\frac{1}{2}}.\nonumber
\end{eqnarray*}
The previous estimate, together with the linear growth property of $U$, then imply
\begin{eqnarray*}
&& \mathbb{E}_{(x,y,i)}\Big[e^{-\rho(\tau_{R}\wedge T)}U(X_{\tau_{R}\wedge T}, Y_{\tau_{R}\wedge T}^{\nu}, \varepsilon_{\tau_{R}\wedge T})\Big] \geq  - C \mathbb{E}_{(x,y,i)}\Big[e^{-\rho(\tau_{R}\wedge T)}\Big] \nonumber \\
&& - \sqrt{2} C \mathbb{E}_{(x,y,i)}\Big[e^{-2\rho(\tau_{R}\wedge T)}\Big]^{\frac{1}{2}}\Big(|x|^2 + (\sigma_1^2 \vee \sigma_2^2)T\Big)^{\frac{1}{2}},
\end{eqnarray*}
for some constant $C>0$. 
Hence
\begin{eqnarray}
\label{verif04-bis}
&& U(x,y,i) \geq - C \mathbb{E}_{(x,y,i)}\Big[e^{-\rho(\tau_{R}\wedge T)}\Big] - \sqrt{2} C \mathbb{E}_{(x,y,i)}\Big[e^{-2\rho(\tau_{R}\wedge T)}\Big]^{\frac{1}{2}}\Big(|x|^2 + (\sigma^2_1 \vee \sigma^2_2)T\Big)^{\frac{1}{2}} \nonumber \\
&& \hspace{+0.5cm} + \mathbb{E}_{(x,y,i)}\bigg[ \int_0^{\tau_{R}\wedge T}e^{-\rho s}(X_s-c) d\nu_s\bigg] -\mathbb{E}_{(x,y,i)}\bigg[\int_0^{\tau_{R}\wedge T}e^{-\rho s}f(Y^{\nu}_s)ds \bigg].
\end{eqnarray}
When taking limits as $R\to\infty$ we have $\tau_{R}\wedge T \rightarrow T$, $\mathbb{P}_{(x,y,i)}$-a.s.\ by regularity of $(X,\varepsilon)$. By Lemma \ref{lemma:UI} in Appendix \ref{app}, the integrals on the right-hand side of \eqref{verif04-bis} are uniformly integrable. We can thus invoke Vitali's convergence theorem to take limits as $R\uparrow\infty$ in \eqref{verif04-bis}, and then as $T\uparrow \infty$, and obtain
\begin{align}
\label{final-part1}
U(x,y,i) \geq \mathbb{E}_{(x,y,i)}\bigg[\int_0^{\infty}e^{-\rho s} (X_s-c) d\nu_s - \int_0^{\infty}e^{-\rho s}f(Y^{\nu}_s)ds \bigg].
\end{align}
Since \eqref{final-part1} holds for any $\nu \in \mathcal{A}_y$, we have $U(x,y,i) \geq V(x,y,i)$. Hence $U \geq V$ on $\mathcal{O}$ by arbitrariness of $(x,y,i) \in \mathcal{O}$.
\end{proof}


\subsection{The Solution Approach}

In this paper we solve problem \eqref{value} in the following two cases (cf.\ Assumption \ref{Asscost} and Remark \ref{rem:concaveconvex}): 
\begin{itemize}
\item[(I)] $y \mapsto f(y)$ is strictly convex on $[0,1]$ (cf.\ Section \ref{subsec:fconvex}); 
\item[(II)] $y \mapsto f(y)$ is concave on $[0,1]$ (cf.\ Section \ref{subsec:fconcave}). 
\end{itemize}
The case of a running cost that is neither convex nor concave on $[0,1]$ needs a separate analysis, and it is left as an interesting open problem (see the recent \cite{DeAFeMo15} and \cite{DeAFeMo15bis} for singular stochastic control problems in which the running cost is neither convex nor concave).

We will follow a \emph{guess-and-verify} approach, by finding in each of the two previous cases a suitable solution to \eqref{HJB}, and then verifying its optimality through a verification theorem. As a byproduct, we will also obtain the optimal control rule. We will see that in both cases (I) and (II) the solution to \eqref{value} is given in terms of the solution to the parameter-dependent (as $y\in (0,1]$ enters only as a parameter) optimal stopping problem with regime switching 
\beq
\label{def-u}
u(x,i;y):=\sup_{\tau \geq 0}\mathbb{E}_{(x,i)}\Big[e^{-\rho \tau}(X_{\tau}-\theta(y))\Big].
\eeq

In \eqref{def-u} the optimization is taken over all $\mathbb{P}_{(x,i)}$-a.s.\ finite $\mathbb{F}$-stopping times; moreover, $\theta(y)$ is a given suitable real number that depends on the initial level of the reserve, $y$, through the running cost function $f$. In particular,
\begin{align}
\label{thetadef}
\theta(y):=\left\{
\begin{array}{ll}
\displaystyle c-\frac{f'(y)}{\rho}\,\,\quad \mbox{if Case (I) holds}\\[+14pt]
\displaystyle c - \frac{1}{\rho}\frac{f(y)}{y}\quad \mbox{if Case (II) holds}.
\end{array}
\right.
\end{align}

To obtain an heuristic justification of the relation between problems \eqref{value} and \eqref{def-u} one can argue as follows.
On the one hand, formally differentiating \eqref{HJB} with respect to $y$ inside the region where $(\mathcal{G} - \rho)V(x,y,i) - f(y) =0$, one sees that for any $i=1,2$ $V_y$ should identify with an appropriate solution to the variational inequality
\begin{equation}
\label{HJB-OS}
\max\Big\{\big(\mathcal{G} - \rho\big) \zeta(x,i;y) - f'(y), x-c - \zeta(x,i;y)\Big\} =0,
\end{equation}
for $x \in \mathbb{R}$ and any given $y\in [0,1]$. 

As well as \eqref{HJB}, notice that also \eqref{HJB-OS} is actually a system of variational inequalities. In fact, it is the variational inequality associated to the family of optimal stopping problem with regime switching 
\begin{eqnarray}
\label{value-OS-heur}
& \displaystyle \sup_{\tau \geq 0}\mathbb{E}_{(x,i)}\Big[e^{-\rho \tau}\big(X_{\tau}-c\big) - \int_{0}^{\tau} e^{-\rho s} f'(y) ds\Big].  
\end{eqnarray}
By evaluating the time integral in \eqref{value-OS-heur}, we easily see that \eqref{value-OS-heur} rewrites as
$$ \sup_{\tau \geq 0}\mathbb{E}_{(x,i)}\Big[e^{-\rho \tau}\Big(X_{\tau}-c + \frac{f'(y)}{\rho}\Big)\Big] - \frac{f'(y)}{\rho},$$
which is clearly equivalent to \eqref{def-u} when $\theta(y)=c-\frac{f'(y)}{\rho}$.

A differential connection between the value functions of a singular control problem and of an optimal stopping problem is commonly observed in singular control problems in which the payoff functional to be maximized is concave with respect to the control variable (see, e.g., \cite{KaratzasBaldursson} and references therein). In light of Remark \ref{rem:concaveconvex} we then expect that $V_y=u$ in Case (I); i.e.\ when $f$ is (strictly) convex.

On the other hand, optimal stopping problem \eqref{def-u} can also arise if we restrict the optimization in \eqref{value} to all the controls of the following purely discontinuous bang-bang type: for some $\mathbb{F}$-stopping time $\tau$ and for any given $y \in [0,1]$, $\nu_t=0$ for any $t\leq \tau$, and $\nu_t=y$ for any $t>\tau$. Indeed, following such a policy, and optimizing with respect to the time of reserve's depletion $\tau$, one ends up with the optimal stopping problem
\begin{eqnarray*}
&\displaystyle \sup_{\tau \geq 0}\mathbb{E}_{(x,i)}\Big[e^{-\rho \tau}\big(X_{\tau}-c\big)y - \int_{0}^{\tau} e^{-\rho s} f(y) ds\Big],
\end{eqnarray*}
which easily rewrites as
$$y\,\sup_{\tau \geq 0}\mathbb{E}_{(x,i)}\Big[e^{-\rho \tau}\Big(X_{\tau}- c + \frac{1}{\rho}\frac{f(y)}{y}\Big)\Big] - \frac{f(y)}{\rho}.$$
The latter is clearly related to \eqref{def-u} when $\theta(y)=c - \frac{1}{\rho}\frac{f(y)}{y}$. 

We expect that a similar connection to problem \eqref{value} (and therefore the optimality of a policy prescribing the instantaneous depletion of the reserve at a suitable stopping time) holds in Case (II). Indeed, in such a case $f$ is concave, and therefore the marginal holding cost of the reserve decreases.

Supported by the previous heuristic discussion, in the next section we will solve problem \eqref{def-u} when $\theta(y)$ is a given constant. In particular, we will show that the solution to \eqref{def-u} is triggered by suitable regime-dependent stopping boundaries $x^*_i(y)$, $y\in (0,1]$, that we will characterize as the unique solutions to a system of nonlinear algebraic equations. These boundaries will then play a crucial role in the construction of the optimal control in both Case (I) and Case (II) (see Sections \ref{subsec:fconvex} and \ref{subsec:fconcave}, respectively).

\section{The Associated Family of Optimal Selling Problems}
\label{sec:OS}

In this section we solve the parameter-dependent optimal stopping problem with regime switching \eqref{def-u}. This result is of interest on its own since problem \eqref{def-u} takes the form of an optimal selling problem in a Bachelier model with regime switching, and with a transaction cost $\theta(y)$ that parametrically depends on $y\in(0,1]$. In the rest of this section $y \in (0,1]$ is given and fixed.

Some preliminary properties of $u$ are stated in the next proposition, whose proof can be found in Appendix \ref{someproofs}. These properties of $u$ will be important in the following when constructing the solution to \eqref{def-u}. 

\begin{proposition}
\label{preliminary:OS}
Recall \eqref{def-u}. There exists a constant $K(y)>0$ such that for any $(x,i) \in \mathbb{R} \times \{1,2\}$
\begin{enumerate}
	\item $u(x,i;y) \geq x-\theta(y)$;
	\item $|u(x,i;y)| \leq K(y)(1+ |x|)$.
\end{enumerate}
\end{proposition}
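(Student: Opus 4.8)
The plan is to establish the two bounds by elementary stochastic-calculus arguments exploiting the structure of the optimal selling problem \eqref{def-u}.

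For part (1), the lower bound $u(x,i;y)\geq x-\hat c(y)$ is immediate: the constant stopping time $\tau\equiv 0$ is admissible, and plugging it into the supremum in \eqref{def-u} gives $u(x,i;y)\geq \mathbb{E}_{(x,i)}[X_0-\hat c(y)]=x-\hat c(y)$. No further work is needed here.

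For part (2), I would prove the two-sided estimate $|u(x,i;y)|\leq K(1+|x|)$ separately. The upper bound is the substantive direction. Fix any $\mathbb{P}_{(x,i)}$-a.s.\ finite stopping time $\tau$. Since $X$ is a (local, hence by the bounded-volatility assumption $\sigma_i<\infty$, in fact square-integrable on bounded time intervals) martingale, I would apply the optional sampling theorem to the process $\{X_{t\wedge\tau}\}_t$, combined with the bound $e^{-\rho\tau}\leq 1$, to control $\mathbb{E}_{(x,i)}[e^{-\rho\tau}X_\tau]$. The clean way is to use the localizing sequence $\beta_n:=\inf\{t\geq 0:|X_t|=n\}$ introduced in Section \ref{sec:problem} (which satisfies $\beta_n\uparrow\infty$ $\mathbb{P}_{(x,i)}$-a.s.\ by regularity): on $[0,\tau\wedge\beta_n]$ the process $e^{-\rho t}X_t$ satisfies, by It\^o's formula,
\beq
\label{eq:prop-ito}
e^{-\rho(\tau\wedge\beta_n)}X_{\tau\wedge\beta_n}=x-\rho\int_0^{\tau\wedge\beta_n}e^{-\rho s}X_s\,ds+\int_0^{\tau\wedge\beta_n}e^{-\rho s}\sigma_{\varepsilon_s}\,dW_s,
\eeq
so that, taking expectations and using that the stochastic integral is a true martingale up to the bounded stopping time $\tau\wedge\beta_n$,
\beq
\mathbb{E}_{(x,i)}\big[e^{-\rho(\tau\wedge\beta_n)}X_{\tau\wedge\beta_n}\big]=x-\rho\,\mathbb{E}_{(x,i)}\Big[\int_0^{\tau\wedge\beta_n}e^{-\rho s}X_s\,ds\Big].
\eeq
The remaining integral is bounded in absolute value by $\mathbb{E}_{(x,i)}[\int_0^\infty e^{-\rho s}|X_s|\,ds]$, and since $\mathbb{E}_{(x,i)}[|X_s|]\leq |x|+\mathbb{E}_{(x,i)}[|X_s-x|]\leq |x|+\sqrt{\mathbb{E}_{(x,i)}[(X_s-x)^2]}\leq |x|+\sqrt{(\sigma_1^2\vee\sigma_2^2)\,s}$, this is finite and grows at most linearly in $|x|$; hence $\mathbb{E}_{(x,i)}[e^{-\rho(\tau\wedge\beta_n)}X_{\tau\wedge\beta_n}]\leq K_1(1+|x|)$ uniformly in $n$ and $\tau$. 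Letting $n\uparrow\infty$ and invoking Fatou's lemma (or dominated convergence, using the integrable dominating function just produced) yields $\mathbb{E}_{(x,i)}[e^{-\rho\tau}X_\tau]\leq K_1(1+|x|)$; subtracting $\mathbb{E}_{(x,i)}[e^{-\rho\tau}\hat c(y)]\geq -|\hat c(y)|$ and taking the supremum over $\tau$ gives $u(x,i;y)\leq K(1+|x|)$ with $K$ absorbing the constant $|\hat c(y)|$ (note $\hat c(y)$ ranges over a bounded set as $y\in[0,1]$ and $f\in C^1$). The lower bound $u(x,i;y)\geq -K(1+|x|)$ follows at once from part (1), since $u(x,i;y)\geq x-\hat c(y)\geq -|x|-|\hat c(y)|$.

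The main obstacle is purely the justification of the martingale/optional-sampling step for a possibly unbounded stopping time $\tau$: one cannot apply optional sampling directly at $\tau$ without an integrability control, which is why the localization by $\{\beta_n\}$ together with the explicit linear-growth moment bound on $\mathbb{E}_{(x,i)}[|X_s|]$ is needed before passing to the limit. Once that is in place, everything else is routine.
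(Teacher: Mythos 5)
Your proposal is correct and follows essentially the same route as the paper: part (1) by $\tau\equiv 0$, and part (2) by It\^o's formula applied to $e^{-\rho t}X_t$, controlling the drift integral $\mathbb{E}_{(x,i)}[\int_0^\infty \rho e^{-\rho s}|X_s|\,ds]$ via H\"older/It\^o-isometry to get the linear-in-$|x|$ bound. The only cosmetic difference is that you localize with $\{\beta_n\}$ and pass to the limit by dominated convergence, whereas the paper notes that the stochastic integral is a uniformly integrable martingale (bounded in $L^2$) and applies optional stopping directly at $\tau$; the two justifications are interchangeable here.
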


In line with the standard theory of optimal stopping (see, e.g., \cite{PeskShir}) we expect $u$ of \eqref{def-u} to suitably satisfy the variational inequality 
\beq
\label{HJB-OS-u}
\max\Big\{\big(\mathcal{G} - \rho\big) w(x,i;y), x-\theta(y) - w(x,i;y)\Big\} =0, \qquad (x,i) \in \mathbb{R} \times \{1,2\},
\eeq
for any given $y\in (0,1]$, and where $\mathcal{G}$ has been defined in \eqref{generator}. Also, we define the continuation and stopping regions of \eqref{def-u} as
\begin{equation*}
\label{cont-stop}
\mathcal{C}:=\Big\{(x,i) \in \mathbb{R} \times \{1,2\}: u(x,i;y) > x-\theta(y)\Big\}, \quad \mathcal{S}:=\Big\{(x,i) \in \mathbb{R} \times \{1,2\}: u(x,i;y) = x-\theta(y)\Big\},
\end{equation*}
respectively. Given the structure of optimal stopping problem \eqref{def-u} we expect that 
\begin{equation}
\label{cont-b}
\mathcal{C}:=\Big\{(x,1): x < x^*_1(y)\Big\} \cup \Big\{(x,2): x < x^*_2(y)\Big\},
\end{equation}
for some thresholds, $x^*_i(y)$, $i=1,2$, such that $x^*_i(y) \geq \theta(y)$, $i=1,2$, and depending parametrically on $y\in (0,1]$.

According to this conjecture three configurations are possible: (A) $x^*_1(y) < x^*_2(y)$, (B) $x^*_1(y) = x^*_2(y)$, and (C) $x^*_1(y) > x^*_2(y)$. We now solve \eqref{HJB-OS-u} in cases (A) and (B). Case (C) is completely symmetric to case (A), and it can be treated with similar arguments. We therefore omit its discussion in this paper in the interest of length. In a second step, by a verification argument, we will show that the solution $w$ to \eqref{HJB-OS-u} satisfies $w\equiv u$. As a byproduct we will also provide the optimal stopping rule $\tau^*$. 

\subsection{Case (A):\, $x^*_1(y) < x^*_2(y)$}

Given our conjecture on the structure of continuation and stopping regions, we rewrite \eqref{HJB-OS-u} in the form of a free-boundary problem. That is, we aim at finding $(w(x,1;y),w(x,2;y), x^*_1(y), x^*_2(y))$ that satisfy the following relations:
\begin{align}
\label{FBP-1-a}
\left\{
\begin{array}{ll}
\tfrac{1}{2}\sigma_i^2 w_{xx}(x,i;y)-\rho w(x,i;y) + \lambda_i(w(x,3-i;y)-w(x,i;y)) =0 & \text{for $x<x^*_1(y)$ and $i=1,2$}\\[+4pt]
\tfrac{1}{2}\sigma_2^2 w_{xx}(x,2;y)-\rho w(x,2;y) + \lambda_2(w(x,1;y)-w(x,2;y)) =0 & \text{for $x^*_1(y)<x<x^*_2(y)$},
\end{array}
\right.
\end{align}
\begin{align}
\label{FBP-1-b}
\left\{
\begin{array}{ll}
w(x,1;y) = x-\theta(y) & \text{for $x^*_1(y)\leq x \leq x^*_2(y)$}\\[+4pt]
w(x,1;y) = x-\theta(y) = w(x,2;y) & \text{for $x \geq x^*_2(y)$}.
\end{array}
\right.
\end{align}
Moreover, from \eqref{HJB-OS-u} $w(\cdot,1;y)$ and $w(\cdot,2;y)$ should also satisfy
\begin{align}
\label{FBP-1-c}
\left\{
\begin{array}{ll}
\tfrac{1}{2}\sigma_i^2 w_{xx}(x,i;y)-\rho w(x,i;y) + \lambda_i(w(x,3-i;y)-w(x,i;y)) \leq 0 & \text{for a.e.\ $x \in \mathbb{R}$ and $i=1,2$}\\[+4pt]
w(x,i;y) \geq x -\theta(y), & \text{for $x \in \mathbb{R}$ and $i=1,2$.}
\end{array}
\right.
\end{align}

Recalling that $\sigma_i>0$ and $\lambda_i>0$, $i=1,2$, let $\alpha_1 < \alpha_2 < 0 < \alpha_3 < \alpha_4$ be the roots of the fourth-order equation $\Phi_1(\alpha)\Phi_2(\alpha) - \lambda_1\lambda_2=0$ (see Lemma \ref{4thordereq} in Appendix \ref{app}), with 
\begin{equation}
\label{def:Phiequal}
\Phi_i(\alpha):=-\frac{1}{2}\sigma_i^2\alpha^2+\rho+\lambda_i, \quad i=1,2.
\end{equation}
Then notice that the first equation of \eqref{FBP-1-a} is actually a system of two second-order ordinary differential equations (ODEs). Hence, transforming such a system into a system of four first-order ODEs, one finds that its general solution is given by 
\begin{align}
\label{eq:jointcont}
\left\{
\begin{array}{ll}
w(x,1;y)=A_1(y)e^{\alpha_1 x}+A_2(y)e^{\alpha_2 x}+A_3(y)e^{\alpha_3 x}+A_4(y)e^{\alpha_4 x}\\[+4pt]
w(x,2;y)=B_1(y)e^{\alpha_1 x}+B_2(y)e^{\alpha_2 x}+B_3(y)e^{\alpha_3 x}+B_4(y)e^{\alpha_4 x},
\end{array}
\right.
\end{align}
for any $x<x^*_1(y)$, $x^*_1(y)$ to be found, and where $B_j(y) :=\frac{\Phi_1(\alpha_j)}{\lambda_1}A_j(y) = \frac{\lambda_2}{\Phi_2(\alpha_j)}A_j(y)$, $j=1,2,3,4$, with $A_j(y)$ to be determined. Since the value function \eqref{def-u} diverges at most linearly (cf.\ Proposition \ref{preliminary:OS}) we set $A_1(y)=0=A_2(y)$, so that also $B_1(y)=0=B_2(y)$.
\smallskip

On the other hand, the solution to the second equation of \eqref{FBP-1-a} and the first equation of \eqref{FBP-1-b} is given on $(x^*_1(y),x^*_2(y))$ by
\begin{align}
\label{eq:cont-stop}
\left\{
\begin{array}{ll}
w(x,1;y)=x-\theta(y)\\[+4pt]
w(x,2;y)=B_5(y) e^{\alpha_5 x} + B_6(y) e^{-\alpha_5 x}+\lambda_2\left(\frac{x-\theta(y)}{\rho+\lambda_2}\right),
\end{array}
\right.
\end{align}
with $\alpha_5=\sqrt{\frac{2(\rho+\lambda_2)}{\sigma_2^2}}$, and for some $B_5(y)$ and $B_6(y)$ to be found.
\smallskip

Finally, for any $x\geq x^*_2(y)$ we have (cf.\ the second equation of \eqref{FBP-1-b})
\begin{equation}
\label{eq:stopstop}
w(x,1;y) = x - \theta(y) = w(x,2;y).
\end{equation}

It now remains to find the constants $A_3(y), A_4(y), B_5(y), B_6(y)$ and the two threshold values $x^*_1(y), x^*_2(y)$. To accomplish that we impose that $w(\cdot,1;y)$ is continuous with continuous first order derivative at $x=x^*_1(y)$, and that $w(\cdot,2;y)$ is continuous with continuous first order derivative at $x=x^*_1(y)$ and $x=x^*_2(y)$. In the optimal stopping literature these regularity requirements are the so-called \emph{continuous-fit} ($C^0$-regularity) and \emph{smooth-fit} ($C^1$-regularity) conditions. Then we find from \eqref{eq:jointcont}--\eqref{eq:stopstop} the nonlinear system
\begin{align}
\label{system}
\left\{
\begin{array}{ll}
A_3(y) e^{\alpha_3 x^{*}_1(y)}+A_4(y)e^{\alpha_4 x^*_1(y)}=x^*_1(y)-\theta(y)\\[+5pt]
\alpha_3 A_3(y) e^{\alpha_3 x^{*}_1(y)}+\alpha_4 A_4(y) e^{\alpha_4 x^*_1(y)}=1 \\[+5pt]
 B_3(y)e^{\alpha_3 x^*_1(y)}+B_4(y)e^{\alpha_4 x^*_1(y)} = B_5(y)e^{\alpha_5 x^*_1(y)}+B_6(y)e^{-\alpha_5 x^*_1(y)}+\lambda_2\left(\frac{x^*_1(y)-\theta(y)}{\rho+\lambda_2}\right) \\[+5pt]
\alpha_3 B_3(y)e^{\alpha_3 x^*_1(y)}+\alpha_4 B_4(y)e^{\alpha_4 x^*_1(y)} = \alpha_5 B_5(y) e^{\alpha_5 x^*_1(y)}-\alpha_5 B_6(y) e^{-\alpha_5 x^*_1(y)}+\frac{\lambda_2}{\rho+\lambda_2}\\ [+4pt] B_5(y) e^{\alpha_5 x^*_2(y)}+B_6(y) e^{-\alpha_5 x^*_2(y)}+\lambda_2\left(\frac{x^*_2(y)-\theta(y)}{\rho+\lambda_2}\right)=x^*_2(y)-\theta(y) \\[+5pt]
\alpha_5 B_5(y)e^{\alpha_5 x^*_2(y)}-\alpha_5 B_6(y) e^{-\alpha_5 x^*_2(y)}+\frac{\lambda_2}{\rho+\lambda_2}=1.
\end{array}
\right.
\end{align}

Solving the first two equations of \eqref{system} with respect to $A_3(y)$ and $A_4(y)$ we obtain after some simple algebra
\begin{equation}
\label{A3-A4}
A_3(y) = \Big[\frac{\alpha_4(x^*_1(y)-\theta(y))-1}{\alpha_4-\alpha_3}\Big]e^{-\alpha_3 x^*_1(y)}, \quad A_4(y) = \Big[\frac{1-\alpha_3(x^*_1(y)-\theta(y))}{\alpha_4-\alpha_3}\Big]e^{-\alpha_4 x^*_1(y)}.
\end{equation}
Analogously, the solution to the fifth and the sixth equations of \eqref{system} is given in terms of the unknown $x^*_2(y)$ as
\begin{align}
\label{B5-B6}
\left\{
\begin{array}{ll}
\displaystyle B_5(y)=\frac{\rho}{\rho+\lambda_2}\left[\frac{e^{-\alpha_5 x^*_2(y)}(1+\alpha_5(x^*_2(y)-\theta(y)))}{2\alpha_5}\right]\\[+4pt]
\\ [+4pt]
\displaystyle B_6(y)=\frac{\rho}{\rho+\lambda_2}\left[\frac{e^{\alpha_5 x^*_2(y)}(\alpha_5(x^*_2(y)-\theta(y))-1)}{2\alpha_5}\right].
\end{array}
\right.
\end{align}
Finally, plugging \eqref{A3-A4} and \eqref{B5-B6} into the third and the fourth equations of \eqref{system}, recalling that $B_3(y)=\frac{\Phi_1(\alpha_3)}{\lambda_1}A_3(y)$ and $B_4(y)=\frac{\Phi_1(\alpha_4)}{\lambda_1}A_4(y)$, we find after some algebra that $(x^*_1(y),x^*_2(y))$ should satisfy
\beq
\label{system-boundaries}
F_1(x^*_1(y),x^*_2(y);y)=0 \qquad \mbox{and} \qquad F_2(x^*_1(y),x^*_2(y);y)=0,
\eeq
where we have set
\begin{align}
\label{F1F2}
\left\{
\begin{array}{ll}
F_1(u,v;y):=\frac{\rho}{\rho+\lambda_2}\Big[(v-\theta(y))\cosh\Big(\alpha_5(v-u)\Big)-\frac{1}{\alpha_5}\sinh\Big(\alpha_5(v-u)\Big)\Big]+a_1(u-\theta(y))+a_2\\[+6pt]
\\ [+2pt]
F_2(u,v;y):=\frac{\rho}{\rho+\lambda_2}\Big[\cosh\Big(\alpha_5(v-u)\Big)-{\alpha_5}(v-\theta(y))\sinh\Big(\alpha_5(v-u)\Big)\Big]+a_3(u-\theta(y))+a_4,
\end{array}
\right.
\end{align}
with $a_i:=a_i(\rho, \lambda_1,\lambda_2,\sigma_1, \sigma_2)$, $i=1,2,3,4$, given by
\begin{align}
\label{a1-a4}
\left\{
\begin{array}{ll}
\displaystyle a_1:=-\frac{\alpha_4\Phi_1(\alpha_3)-\alpha_3\Phi_1(\alpha_4)}{\lambda_1(\alpha_4-\alpha_3)}+\frac{\lambda_2}{\rho+\lambda_2}, & \quad \displaystyle a_2:=\frac{\Phi_1(\alpha_3)-\Phi_1(\alpha_4)}{\lambda_1(\alpha_4-\alpha_3)},\\[+5pt]
\displaystyle a_3:=\frac{\alpha_3\alpha_4}{\lambda_1(\alpha_4-\alpha_3)}[\Phi_1(\alpha_4)-\Phi_1(\alpha_3)],& \quad
\displaystyle a_4:=\frac{\alpha_3\Phi_1(\alpha_3)-\alpha_4\Phi_1(\alpha_4)}{\lambda_1(\alpha_4-\alpha_3)} +\frac{\lambda_2}{\rho+\lambda_2}.
\end{array}
\right.
\end{align}
Notice that $a_1 <0$, $a_2>0$, $a_3 <0$ and $a_4>0$ by Lemma \ref{lemm:valuesai} in Appendix \ref{app}. 

Since we expect from \eqref{def-u} that $x^*_i(y)$, $i=1,2$, are such that $x^*_2(y) > x^*_1(y) \geq \theta(y)$, it is natural to check if \eqref{system-boundaries} admits a solution in $(\theta(y), \infty) \times (\theta(y), \infty)$.
So far we do not know about existence, and in case uniqueness, of such a solution. To investigate this fact we define $z^*_1(y):=x_1^*(y)-\theta(y)$ and $z^*_2(y):=x_2^*(y)-x_1^*(y)$, so that $x_2^*(y)-\theta(y)= z^*_1(y) + z^*_2(y)$, and we notice that with such a definition the explicit dependence with respect to $y$ disappears in \eqref{system-boundaries}. We can thus drop the $y$-dependence in $z^*_i(y)$, $i=1,2$, and set $(z^*_1,z^*_2)$ as the solution, if it does exist, of the equivalent system
\beq
\label{system-boundaries-2}
G_1(u,v)=0 \qquad \mbox{and} \qquad G_2(u,v)=0,
\eeq
with
\begin{align}
\label{G1G2}
\left\{
\begin{array}{ll}
G_1(u,v):=(a_1+\frac{\rho}{\rho+\lambda_2}\cosh(\alpha_5 v))u-\frac{\rho}{\rho+\lambda_2}[\frac{1}{\alpha_5}\sinh(\alpha_5 v)-v\cosh(\alpha_5 v)]+a_2\\[+4pt]
\\ [+2pt]
G_2(u,v):=(a_3-\frac{\rho\alpha_5}{\rho+\lambda_2}\sinh(\alpha_5v))u - \frac{\rho}{\rho+\lambda_2}[v\alpha_5\sinh(\alpha_5v)-\cosh(\alpha_5v)]+a_4,
\end{array}
\right.
\end{align}
for $u,v\geq 0$.

\begin{proposition}
\label{prop:existence}
Let $\hat{z}_2$ be the unique positive solution to the equation
$$a_1+\frac{\rho}{\rho+\lambda_2}\cosh(\alpha_5 v) =0, \quad v \geq 0,$$
with $a_1$ as in \eqref{a1-a4} and $\alpha_5=\sqrt{\frac{2(\rho+\lambda_2)}{\sigma_2^2}}$.
Then there exists a unique couple $(z^*_1,z^*_2)$ solving \eqref{system-boundaries-2} in $(0,\infty) \times (0, \hat{z}_2)$ if and only if $\sigma_1^2 < \sigma_2^2$. Moreover $z^*_1$ is such that 
$$-\frac{a_2}{a_1+\frac{\rho}{\rho+\lambda_2}} < z^*_1 < -\frac{\frac{\rho}{\rho+\lambda_2}+a_4}{a_3}.$$
\end{proposition}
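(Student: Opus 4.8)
The plan is to eliminate $u$ from the coupled system \eqref{system-boundaries-2} and reduce it to a single scalar root‑finding problem. First I would check that $\hat z_2$ is well defined: the map $v\mapsto a_1+\frac{\rho}{\alpha_5(\rho+\lambda_2)}\cosh(\alpha_5 v)$ is strictly increasing on $[0,\infty)$, it is negative at $v=0$ by Assumption \ref{assm-1}.2, and it is nonnegative at $v=1/\alpha_5$ (where $\alpha_5 v=1$) by Assumption \ref{assm-1}.3, hence it has a unique positive zero $\hat z_2\in(0,1/\alpha_5]$ and is strictly negative on $(0,\hat z_2)$. Therefore the coefficient of $u$ in $G_1$ is nonzero on $(0,\hat z_2)$ and $G_1(u,v)=0$ is uniquely solved there by $u=u_1(v)$, with
\[
u_1(v):=\frac{\frac{\rho}{\alpha_5(\rho+\lambda_2)}\big(\sinh(\alpha_5 v)-v\cosh(\alpha_5 v)\big)-a_2}{a_1+\frac{\rho}{\alpha_5(\rho+\lambda_2)}\cosh(\alpha_5 v)}.
\]
Likewise, since $a_3<0$ and $\sinh(\alpha_5 v)>0$, the coefficient of $u$ in $G_2$ is strictly negative for every $v\ge 0$, so $G_2(u,v)=0$ is uniquely solved by $u=u_2(v)$, with
\[
u_2(v):=\frac{\frac{\rho}{\rho+\lambda_2}\big(v\alpha_5\sinh(\alpha_5 v)-\cosh(\alpha_5 v)\big)-a_4}{a_3-\frac{\rho\alpha_5}{\rho+\lambda_2}\sinh(\alpha_5 v)}.
\]
Thus solving \eqref{system-boundaries-2} in $(0,\infty)\times(0,\hat z_2)$ is equivalent to finding the zeros in $(0,\hat z_2)$ of $\Delta:=u_1-u_2$; to each zero $z_2^*$ corresponds $z_1^*:=u_1(z_2^*)=u_2(z_2^*)$, which will be positive because $u_1>0$ on $(0,\hat z_2)$.

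The core of the argument is a monotonicity and sign analysis of $u_1$ and $u_2$. For $u_1$: writing $g(v):=\sinh(\alpha_5 v)-v\cosh(\alpha_5 v)$ one has $g(0)=0$ and $g'(v)=(\alpha_5-1)\cosh(\alpha_5 v)-v\alpha_5\sinh(\alpha_5 v)<0$ for $v>0$ \emph{precisely because} $\alpha_5\le 1$ (Assumption \ref{assm-1}.1); hence $g<0$ on $(0,\infty)$, so on $(0,\hat z_2)$ the numerator of $u_1$ is strictly negative and strictly decreasing while its denominator is strictly negative and strictly increasing. Consequently $u_1$ is strictly positive and strictly increasing on $(0,\hat z_2)$, with $u_1(0^+)=-a_2/(a_1+\frac{\rho}{\alpha_5(\rho+\lambda_2)})>0$ and $u_1(v)\uparrow+\infty$ as $v\uparrow\hat z_2$. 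For $u_2$: with $h(v):=v\alpha_5\sinh(\alpha_5 v)-\cosh(\alpha_5 v)$ one has $h(0)=-1$ and $h'(v)=v\alpha_5^2\cosh(\alpha_5 v)>0$ for $v>0$, so the numerator of $u_2$ is strictly increasing, negative at $0$ and tending to $+\infty$, hence it vanishes at a unique $\bar v>0$; the denominator of $u_2$ is strictly negative and strictly decreasing. Therefore $u_2$ is strictly positive and strictly decreasing on $(0,\bar v)$, with $u_2(0^+)=-(\frac{\rho}{\rho+\lambda_2}+a_4)/a_3>0$ and $u_2(\bar v)=0$, while $u_2<0$ on $(\bar v,\infty)$.

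I would then conclude by an intermediate‑value argument for $\Delta$. Since $u_1(0^+)=-a_2/(a_1+\frac{\rho}{\alpha_5(\rho+\lambda_2)})$ and $-u_2(0^+)=\frac{1}{a_3}(\frac{\rho}{\rho+\lambda_2}+a_4)$, the quantity $\Delta(0^+)=u_1(0^+)-u_2(0^+)$ is exactly the left‑hand side of Assumption \ref{assm-1}.4, hence $\Delta(0^+)<0$. On $(0,\min(\bar v,\hat z_2))$ the function $\Delta$ is continuous and strictly increasing, being the sum of the strictly increasing $u_1$ and the strictly increasing $-u_2$. If $\bar v\ge\hat z_2$, then $\Delta$ is strictly increasing on all of $(0,\hat z_2)$ and $\Delta(v)\to+\infty$ as $v\uparrow\hat z_2$ (because $u_1$ blows up while $u_2$ stays bounded), so $\Delta$ has a unique zero $z_2^*\in(0,\hat z_2)$. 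If $\bar v<\hat z_2$, then $\Delta(\bar v^-)=u_1(\bar v)>0$, so $\Delta$ has a unique zero $z_2^*\in(0,\bar v)$, and on $[\bar v,\hat z_2)$ one has $u_2\le 0<u_1$, i.e.\ $\Delta>0$, so no further zero exists. In either case there is exactly one $z_2^*\in(0,\hat z_2)$ with $\Delta(z_2^*)=0$; setting $z_1^*:=u_1(z_2^*)=u_2(z_2^*)>0$ gives the unique solution of \eqref{system-boundaries-2} in $(0,\infty)\times(0,\hat z_2)$. Finally, since $z_2^*\in(0,\bar v)$ in all cases, monotonicity yields $z_1^*=u_1(z_2^*)>u_1(0^+)=-a_2/(a_1+\frac{\rho}{\alpha_5(\rho+\lambda_2)})$ and $z_1^*=u_2(z_2^*)<u_2(0^+)=-(\frac{\rho}{\rho+\lambda_2}+a_4)/a_3$, which is the asserted two‑sided estimate.

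The step I expect to be most delicate is the monotonicity of $u_1$ and $u_2$: it rests on careful sign bookkeeping for the numerator/denominator pairs together with the auxiliary inequalities $g'<0$ — which is exactly where Assumption \ref{assm-1}.1 ($\alpha_5\le 1$) is used — and $h'>0$, and it also requires tracking the pole of $u_1$ at $\hat z_2$ and the possible sign change of the numerator of $u_2$ inside $(0,\hat z_2)$, since these are what split the intermediate‑value argument into its two cases.
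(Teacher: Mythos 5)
Your proof is correct and follows the paper's skeleton closely: you perform the same reduction to $u=M_1(v)$, $M_1-M_2=0$ (your $u_1,u_2$ are exactly the paper's $M_1,M_2$ from \eqref{eq:M1M2}), the same analysis showing $u_1$ is positive, strictly increasing and blows up at $\hat z_2$ (with $\alpha_5\le1$ entering through $g'<0$), the same use of Assumption \ref{assm-1}.4 for $\Delta(0^+)<0$, and the same monotonicity comparison $M_1(0)<z_1^*<M_2(0)$ for the two-sided bound. The one genuine deviation is how you handle the monotonicity of $u_2$: the paper proves that under Assumption \ref{assm-1}.3 the numerator of $M_2$ is negative on all of $[0,\hat z_2]$ (via $\alpha_5\hat z_2\le1$ and the function $r$), so that $M_2$ is strictly decreasing there and $M_1-M_2$ is monotone on the whole interval. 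You instead introduce the point $\bar v$ where that numerator changes sign, show $u_2$ is decreasing only on $(0,\bar v)$ and nonpositive on $[\bar v,\infty)$, and then close the argument by a case split ($\bar v\gtrless\hat z_2$) noting that $\Delta>0$ wherever $u_2\le0<u_1$. The upshot is that your argument does not actually invoke Assumption \ref{assm-1}.3 (you mention it only to locate $\hat z_2\le1/\alpha_5$, a bound you never use, and $\hat z_2$ already exists by Assumption \ref{assm-1}.2 alone), so you obtain the existence, uniqueness and bounds under slightly weaker hypotheses than the paper's proof. This costs a short case distinction; the paper's route buys a cleaner ``$\Delta$ is strictly increasing on all of $[0,\hat z_2)$'' statement at the price of the extra parameter restriction, and that same bound $\alpha_5\hat z_2\le1$ is reused later in the verification arguments, so Assumption \ref{assm-1}.3 is not redundant for the paper as a whole.
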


\begin{proof}
\emph{Step 1.} Note that the function $r(v):=\frac{\rho}{\rho+\lambda_2}[\frac{1}{\alpha_5}\sinh(\alpha_5 v)-v\cosh(\alpha_5 v)] - a_2$, $v\geq 0$, is strictly decreasing, and therefore strictly negative for any $v\geq 0$ since $r(0) = - a_2 < 0$ (cf.\ Lemma \ref{lemm:valuesai} in Appendix \ref{app}).
\medskip

\emph{Step 2.} Here we prove that the equation $h(v)=0$ with $h(v):=a_1+\frac{\rho}{\rho+\lambda_2}\cosh(\alpha_5 v)$, $v \geq 0$, admits a unique solution $\hat{z}_2 > 0$. For this it suffices to notice that $v\mapsto h(v)$ is strictly increasing with $\lim_{v\rightarrow \infty}h(v) = +\infty$, and that $h(0)=a_1+\frac{\rho}{\rho+\lambda_2} = - \frac{\rho + \frac{1}{2}\sigma_1^2 \alpha_3\alpha_4}{\lambda_1} < 0$. The last inequality in the previous formula follows by using \eqref{a1final} of Appendix \ref{app}.
\medskip

\emph{Step 3.} By \emph{Step 2} for any $v \in [0,\hat{z}_2)$ we can rewrite \eqref{system-boundaries-2} in the equivalent form 
$$u = M_1(v), \qquad M_1(v) - M_2(v) =0,$$
with
\begin{align}
\label{eq:M1M2}
\left\{
\begin{array}{ll}
\displaystyle M_1(v):=\frac{\frac{\rho}{\rho+\lambda_2}[\frac{1}{\alpha_5}\sinh(\alpha_5v)-v\cosh(\alpha_5 v)]-a_2}{a_1+\frac{\rho}{\rho+\lambda_2}\cosh(\alpha_5 v)}\\[+4pt]
\\ [+2pt]
\displaystyle M_2(v):=\frac{\frac{\rho}{\rho+\lambda_2}[v\alpha_5\sinh(\alpha_5v)-\cosh(\alpha_5v)]-a_4}{a_3-\frac{\rho\alpha_5}{\rho+\lambda_2}\sinh(\alpha_5v)},
\end{array}
\right.
\end{align}
where we have also used the fact that $a_3-\frac{\rho\alpha_5}{\rho+\lambda_2}\sinh(\alpha_5v) \neq 0$ on $[0,\infty)$ being $a_3<0$ (see again Lemma \ref{lemm:valuesai} in Appendix \ref{app}).

The numerator of $M_1$ in \eqref{eq:M1M2} is strictly negative on $v\geq 0$ by \emph{Step 1}. Using this fact, and noticing that $a_1+\frac{\rho}{\rho+\lambda_2}\cosh(\alpha_5 v) <0$ on $[0, \hat{z}_2)$, by direct calculations one can observe that $v \mapsto M_1(v)$ strictly increases on $[0, \hat{z}_2)$, and it is such that $\lim_{z \uparrow \hat{z}_2}M_1(v) = + \infty$.

Also, one can check by employing \eqref{Phis-1} and \eqref{Phis-2} of Lemma \ref{lemm:valuesai}, and the definitions of $\alpha_3$ and $\alpha_4$, that $M_1(0)-M_2(0)=\frac{1}{a_3}\Big(\frac{\rho}{\rho+\lambda_2}+a_4\Big)-\frac{a_2}{a_1+\frac{\rho}{\rho+\lambda_2}} < 0$ if and only if $\sigma^2_1 < \sigma^2_2$. We now claim (and prove later) that $v\mapsto M_2(v)$ strictly decreases in $[0,\hat{z}_2]$, so that $v\mapsto M_1(v)-M_2(v)$ strictly increases on $[0,\hat{z}_2)$ and diverges to $+\infty$ as $z$ approaches $\hat{z}_2$. Combining all these facts we conclude that there exists a unique $z^*_2 \in (0, \hat{z}_2)$ solving $M_1(v) - M_2(v) =0$. Hence, $z^*_1 = M_1(z^*_2)$ (or, equivalently, $z^*_1=M_2(z^*_2)$), and $z^*_1>0$ because $M_1(v) \geq M_1(0) > 0$ on $[0, \hat{z}_2)$. 

Moreover, since $M_1(\cdot)$ is strictly increasing, $M_2(\cdot)$ is strictly decreasing on $[0,\hat{z}_2)$, and $z_2^* < \hat{z}_2$, one has $M_1(0) < z^*_1 < M_2(0)$; i.e.,
\begin{equation}
\label{w1convex-2-a}
0< -\frac{a_2}{a_1+\frac{\rho}{\rho+\lambda_2}} < z^*_1 < -\frac{\frac{\rho}{\rho+\lambda_2}+a_4}{a_3}.
\end{equation}
\medskip

\emph{Step 4.} To complete the proof we need to show that $v\mapsto M_2(v)$ is strictly decreasing in $[0,\hat{z}_2]$. By direct calculations one can see that the latter monotonicity property holds if $$-\frac{\rho}{\rho+\lambda_2}\cosh(\alpha_5v) + a_3 v <0$$ on $[0,\hat{z}_2]$. But this is true since $a_3 < 0$.
\end{proof}


Since by Proposition \ref{prop:existence} there exists a unique couple $(z^*_1,z^*_2)$ solving \eqref{system-boundaries-2} in $(0,\infty) \times (0, \hat{z}_2)$ if and only if $\sigma_1^2 < \sigma_2^2$, the latter condition is taken as a standing assumption throughout the rest of this section.

\begin{corollary}
\label{cor:boundary-system}
There exists a unique couple $(x_1^*(y), x_2^*(y)) \in (\theta(y), + \infty) \times (\theta(y), + \infty)$ solving \eqref{system-boundaries}. Moreover, it is such that $x_2^*(y) > x_1^*(y)$. 
\end{corollary}
\begin{proof}
By Proposition \ref{prop:existence} there exists a unique couple $(z^*_1,z^*_2)$ solving \eqref{system-boundaries-2} in $(0,\infty) \times (0, \hat{z}_2)$. Since $z^*_1=x_1^*(y)-\theta(y)$ and $z^*_2=x_2^*(y)-x_1^*(y)$, one has $x_1^*(y) = z^*_1 + \theta(y) > \theta(y)$ and $x_2^*(y)= z^*_2 + x_1^*(y) > x_1^*(y) > \theta(y)$. 
\end{proof}



Theorem \ref{candidate-w} below proves that $(w(x,1;y), w(x,2;y),x^*_1(y), x^*_2(y))$ solve free-boundary problem \eqref{FBP-1-a}-\eqref{FBP-1-c}. Its proof is quite long and technical, and for this reason it is postponed to Appendix \ref{someproofs}.

\begin{theorem}
\label{candidate-w}
\textbf{[The Candidate Value Function]} Let $(x_1^*(y), x_2^*(y))$ with $x_2^*(y) > x_1^*(y)$ be the unique solution to \eqref{system-boundaries} in $(\theta(y), + \infty) \times (\theta(y), + \infty)$. Define $A_3(y)$ and $A_4(y)$ as in \eqref{A3-A4}, $B_3(y):=\frac{\Phi_1(\alpha_3)}{\lambda_1}A_3(y)$ and $B_4(y):=\frac{\Phi_1(\alpha_4)}{\lambda_1}A_4(y)$, and $B_5(y)$ and $B_6(y)$ as in \eqref{B5-B6}. Then the functions
\begin{align}
\label{eq:candidatevalue1}
w(x,1;y):=
\left\{
\begin{array}{ll}
A_3(y)e^{\alpha_3 x}+A_4(y)e^{\alpha_4 x}, & \quad x \leq x^*_1(y)\\[+5pt]
x - \theta(y), & \quad x \geq x^*_1(y),
\end{array}
\right.
\end{align}
and 
\begin{align}
\label{eq:candidatevalue2}
w(x,2;y):=
\left\{
\begin{array}{ll}
B_3(y)e^{\alpha_3 x}+B_4(y)e^{\alpha_4 x}, & \quad x \leq x^*_1(y)\\[+5pt]
B_5(y) e^{\alpha_5 x} + B_6(y) e^{-\alpha_5 x}+\lambda_2\left(\frac{x-\theta(y)}{\rho+\lambda_2}\right), & \quad x^*_1(y) \leq x \leq x^*_2(y)\\[+5pt]
x - \theta(y), & \quad x \geq x^*_2(y),
\end{array}
\right.
\end{align}
are such that $w(\cdot,i;y) \in C^1(\mathbb{R})$ with $w_{xx}(\cdot,i;y) \in L^{\infty}_{loc}(\mathbb{R})$ for any $i=1,2$, and $|w(x,i;y)|\leq \kappa_i(y)(1 + |x|)$ for some $\kappa_i(y) > 0$. Moreover, $(w(x,1;y), w(x,2;y),x^*_1(y), x^*_2(y))$ solve free-boundary problem \eqref{FBP-1-a}-\eqref{FBP-1-c}.
\end{theorem}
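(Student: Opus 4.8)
The plan is to verify the statement in two stages, corresponding to the two sentences of the theorem. For the first stage — that $w(\cdot,i;y) \in C^1(\mathbb{R})$ with $w_{xx}(\cdot,i;y) \in L^\infty_{loc}(\mathbb{R})$ and linear growth — I would argue piece by piece. On each of the intervals $(-\infty, x^*_1(y))$, $(x^*_1(y), x^*_2(y))$, $(x^*_2(y), \infty)$ the functions $w(\cdot,1;y)$ and $w(\cdot,2;y)$ are given by explicit smooth (real-analytic) expressions, so only the gluing points $x^*_1(y)$ and $x^*_2(y)$ need attention. Continuity and $C^1$-matching at these points is precisely what the six equations of the nonlinear system \eqref{system} encode, together with the defining relations \eqref{A3-A4}, \eqref{B5-B6} and the choice $B_3(y) = \frac{\Phi_1(\alpha_3)}{\lambda_1}A_3(y)$, $B_4(y) = \frac{\Phi_1(\alpha_4)}{\lambda_1}A_4(y)$: since $(x^*_1(y),x^*_2(y))$ solves \eqref{system-boundaries} by hypothesis (Corollary \ref{cor:boundary-system}), these matching conditions hold and $w(\cdot,i;y)\in C^1(\mathbb{R})$. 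The second derivative is continuous away from the two boundary points and has one-sided limits there, hence $w_{xx}(\cdot,i;y)\in L^\infty_{loc}(\mathbb{R})$. Linear growth is immediate: on $[x^*_1(y),\infty)$ (resp.\ $[x^*_2(y),\infty)$) the function equals $x-\hat c(y)$, on the bounded middle interval it is continuous, and on $(-\infty,x^*_1(y))$ it is a combination of $e^{\alpha_3 x}$, $e^{\alpha_4 x}$ with $\alpha_3,\alpha_4>0$, which is bounded as $x\to-\infty$; so $|w(x,i;y)|\le \kappa_i(1+|x|)$.

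For the second stage I must check every line of the free boundary problem \eqref{FBP-1}. The equalities — the ODEs on the three regions, the identities $w(x,1;y)=x-\hat c(y)$ on $[x^*_1(y),\infty)$, $w(x,2;y)=x-\hat c(y)$ on $[x^*_2(y),\infty)$, and the smooth-fit / continuity conditions at $x^*_1(y)$ and $x^*_2(y)$ — all hold by construction: the ansätze \eqref{eq:jointcont}, \eqref{eq:cont-stop}, \eqref{eq:stopstop} were chosen as the general solutions of the corresponding ODEs, and the constants were pinned down precisely so that \eqref{system} holds. The only genuinely nontrivial content of \eqref{FBP-1} is therefore the pair of inequalities: (I) $(\mathcal G - \rho)w(x,i;y)\le 0$ for a.e.\ $x$ and $i=1,2$, and (II) $w(x,i;y)\ge x-\hat c(y)$ for all $x$ and $i=1,2$. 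Inequality (I) is automatically satisfied with equality on the continuation pieces; it must be checked on the stopping pieces, i.e.\ for $i=1$ on $x>x^*_1(y)$ and for $i=2$ on $x>x^*_2(y)$, where $w$ equals $x-\hat c(y)$. There $(\mathcal G-\rho)w(x,1;y) = -\rho(x-\hat c(y)) + \lambda_1(w(x,2;y)-(x-\hat c(y)))$, and one must verify this is $\le 0$; on $x>x^*_2(y)$ both regimes are in the stopping region and it reduces to $-\rho(x-\hat c(y))\le 0$, true since $x>x^*_2(y)>\hat c(y)$. The harder sub-range is $x^*_1(y) < x < x^*_2(y)$, where for $i=1$ one needs $-\rho(x-\hat c(y)) + \lambda_1(w(x,2;y) - (x-\hat c(y)))\le 0$, i.e.\ an upper bound on the gap $w(x,2;y)-(x-\hat c(y))$; this is where Assumption \ref{assm-2} enters. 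Inequality (II) must be checked for $i=1$ on $x<x^*_1(y)$ (the expression $A_3(y)e^{\alpha_3 x}+A_4(y)e^{\alpha_4 x}$ versus $x-\hat c(y)$) and for $i=2$ on $x<x^*_2(y)$.

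Concretely, for (II) I would exploit convexity/monotonicity: at $x=x^*_1(y)$ the function $w(\cdot,1;y)$ and the line $x-\hat c(y)$ agree in value and in first derivative (slope $1$), so it suffices to show $w(\cdot,1;y)$ is convex, or at least lies above its tangent, on $(-\infty, x^*_1(y))$; this follows from sign information on $A_3(y), A_4(y)$ (available from \eqref{A3-A4} and the bounds on $z^*_1$ in Proposition \ref{prop:existence}/Corollary \ref{cor:boundary-system}) together with $\alpha_3,\alpha_4>0$. Similarly for $w(\cdot,2;y)$: on $(x^*_1(y),x^*_2(y))$ it matches the line with slope $1$ at $x^*_2(y)$, and on $(-\infty,x^*_1(y))$ one uses $C^1$-pasting at $x^*_1(y)$ and the sign of $B_3(y),B_4(y)$. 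For (I), the key estimate on $x^*_1(y)<x<x^*_2(y)$ for $i=1$: since $w(x,2;y)-(x-\hat c(y))\ge 0$ there and vanishes at $x=x^*_2(y)$, I would bound it using the explicit form \eqref{eq:candidatevalue2} and the relation $\alpha_5=\sqrt{2(\rho+\lambda_2)/\sigma_2^2}$, reducing the required inequality to $\lambda_1$ times this gap not exceeding $\rho(x-\hat c(y))$; Assumption \ref{assm-2}, $\alpha_5\le (\lambda_2\wedge\rho)/\lambda_2$, is exactly calibrated to make a $\cosh$/$\sinh$ estimate close. I expect this last verification of the variational inequality (I) on the middle interval for regime $1$ to be the main obstacle — it is a genuine analytic inequality in the transcendental functions $\cosh,\sinh$ rather than a consequence of construction, and it is the only place where the strengthened Assumption \ref{assm-2} (as opposed to Assumption \ref{assm-1}) is used; controlling the gap $w(x,2;y)-(x-\hat c(y))$ uniformly in $y\in[0,1]$, using the bounds from Proposition \ref{prop:existence} and Lemma \ref{lemm:valuesai}, will be the technical heart of the argument.
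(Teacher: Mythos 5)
Your first stage — regularity, linear growth, and the ``by construction'' observation that only the two inequalities of \eqref{FBP-1} require work — coincides with the paper's Step~1. Your strategy for $w(\cdot,1;y)\ge x-\hat c(y)$ on $(-\infty,x^*_1(y))$ via convexity is also the paper's Step~2: one shows $w_{xx}(\cdot,1;y)\ge 0$ there using the bounds on $z^*_1$ from Proposition~\ref{prop:existence}, which in particular force $A_3(y)>0$, and then smooth-fit at $x^*_1(y)$ does the rest.

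Where your plan diverges from the paper, and where it has a real gap, is in the treatment of $w(\cdot,2;y)$. You propose to read off convexity or monotonicity of $w(\cdot,2;y)$ on $(-\infty,x^*_1(y))$ from ``the sign of $B_3(y),B_4(y)$''. This does not work cleanly: $B_j=\Phi_1(\alpha_j)A_j/\lambda_1$ with $\Phi_1(\alpha_3)>0>\Phi_1(\alpha_4)$, so the two coefficients do not inherit a single sign pattern from $A_3,A_4$, and the naive convexity argument that works for regime $1$ breaks down for regime $2$. The paper handles this (Step~4) by a different mechanism entirely: set $\widehat w(x,i;y)=w(x,i;y)-(x-\hat c(y))$, differentiate the coupled ODE once, and apply It\^o/Dynkin on the exit time $\tau_1$ of $(X,\varepsilon)$ from $\{x<x^*_1(y)\}$ — using recurrence of the regime-switching diffusion — to propagate the boundary inequalities $\widehat w_x(x^*_1(y),i;y)\le 0$ into $\widehat w_x(x,2;y)\le 0$ on $(-\infty,x^*_1(y))$, and then repeats the same trick on $(x^*_1(y),x^*_2(y))$. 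Nothing in your sketch anticipates this probabilistic comparison step, and an attempt to replace it with a direct sign/convexity argument would not close.

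You also misidentify where Assumption~\ref{assm-2} is used. You place it in the PDE inequality $(\mathcal G-\rho)w(x,1;y)\le 0$ on the transient interval, as a $\cosh/\sinh$ estimate on the gap $w(x,2;y)-(x-\hat c(y))$. In the paper, Assumption~\ref{assm-2} enters earlier (Step~3), to prove the two boundary estimates $w(x^*_1(y),2;y)\ge x^*_1(y)-\hat c(y)$ and $w_x(x^*_1(y),2;y)\le 1$: the inequality~\eqref{1} that produces the lower bound on $x^*_1(y)-\hat c(y)$ is shown to be equivalent to $\alpha_5\le\rho/\lambda_2$. Those boundary estimates then feed the probabilistic Step~4. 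The PDE inequality you worry about is handled afterwards (Step~5) by a soft comparison: from the ODE on $(-\infty,x^*_1(y))$ and convexity of $w(\cdot,1;y)$ one deduces $w(x,2;y)\le\frac{\rho+\lambda_1}{\lambda_1}w(x,1;y)$ there; taking the limit $x\uparrow x^*_1(y)$ and combining with the monotonicity of $w(\cdot,2;y)-(x-\hat c(y))$ from Step~4 extends the bound across $(x^*_1(y),x^*_2(y))$ with no explicit transcendental estimate needed. So the ``technical heart'' is not where you expect it, and your plan as stated is missing the probabilistic propagation argument that the paper relies on.
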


We now verify the actual optimality of the candidate value function of Theorem \ref{candidate-w}. The proof of this result is contained in Appendix \ref{someproofs}.

\begin{theorem}
\label{thm:verifying}
\textbf{[The Verification Theorem]} Let $\mathcal{C}=\{(x,1): x < x_1^*(y)\} \cup \{(x,2): x < x_2^*(y)\}$. Then, for $w$ as in Theorem \ref{candidate-w} and for $u$ as in \eqref{def-u}, one has that $w = u$ on $\mathbb{R}\times \{1,2\}$ and
\beq
\label{tau-optimal}
\tau^*:=\inf\{t\geq 0 : (X_{t},\varepsilon_{t})\not \in \mathcal{C}\},\quad \mathbb{P}_{(x,i)}-a.s.,
\eeq
is an optimal stopping time.
\end{theorem}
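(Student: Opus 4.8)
The plan is to run a standard verification argument for optimal stopping, using the candidate $w$ from Theorem \ref{candidate-w} as a supersolution and then establishing equality along the candidate optimal rule $\tau^*$. First I would fix $(x,i) \in \RR \times \{1,2\}$ and let $\tau$ be an arbitrary $\PP_{(x,i)}$-a.s.\ finite $\mathbb{F}$-stopping time. Since $w(\cdot,i;y) \in C^1(\RR)$ with $w_{xx}(\cdot,i;y) \in L^\infty_{loc}(\RR)$, a generalized It\^o formula (applicable thanks to the regularity of $w$ and the fact that $(X,\varepsilon)$ is a regular jump-diffusion) gives, for the localizing sequence $\beta_n := \inf\{t \ge 0 : |X_t| = n\}$,
\beq
\label{eq:verif-ito}
\EE_{(x,i)}\big[e^{-\rho(\tau\wedge\beta_n)} w(X_{\tau\wedge\beta_n}, \varepsilon_{\tau\wedge\beta_n};y)\big] = w(x,i;y) + \EE_{(x,i)}\Big[\int_0^{\tau\wedge\beta_n} e^{-\rho s}(\mathcal{G}-\rho)w(X_s,\varepsilon_s;y)\,ds\Big],
\eeq
where the stochastic integral term vanishes in expectation because $w_x$ is bounded on $[-n,n]$ and $\sigma_{\varepsilon_\cdot}$ is bounded, so the local martingale is a true martingale up to $\beta_n$. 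By the last two lines of the free boundary problem \eqref{FBP-1} (the variational inequalities), $(\mathcal{G}-\rho)w \le 0$ a.e., so the integral term is nonpositive, and $w \ge x - \hat c(y)$ everywhere; hence $w(x,i;y) \ge \EE_{(x,i)}[e^{-\rho(\tau\wedge\beta_n)}(X_{\tau\wedge\beta_n} - \hat c(y))]$.

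Next I would pass to the limit $n \uparrow \infty$. Since $\beta_n \uparrow \infty$ $\PP_{(x,i)}$-a.s.\ (regularity of $(X,\varepsilon)$, recalled in Section \ref{sec:problem}), $\tau \wedge \beta_n \to \tau$ a.s. The linear-growth bounds $|w(x,i;y)| \le \kappa_i(1+|x|)$ from Theorem \ref{candidate-w}, combined with the fact that $\{e^{-\rho t}(1+|X_t|)\}_{t\ge0}$ is dominated in $L^1$ uniformly in $t$ (as in the proof of Proposition \ref{preliminary:OS}, using It\^o's isometry for $\int_0^t \sigma_{\varepsilon_u}dW_u$), give the uniform integrability needed to pass the limit through the expectation on both sides, yielding $w(x,i;y) \ge \EE_{(x,i)}[e^{-\rho\tau}(X_\tau - \hat c(y))]$. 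Taking the supremum over all such $\tau$ gives $w(x,i;y) \ge u(x,i;y)$.

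For the reverse inequality I would run the same computation with the specific rule $\tau^*$ of \eqref{tau-optimal}. On $\mathcal{C}$ one has $(\mathcal{G}-\rho)w = 0$ by the PDE parts of \eqref{FBP-1}, so \eqref{eq:verif-ito} with $\tau = \tau^*$ becomes an equality $w(x,i;y) = \EE_{(x,i)}[e^{-\rho(\tau^*\wedge\beta_n)}w(X_{\tau^*\wedge\beta_n},\varepsilon_{\tau^*\wedge\beta_n};y)]$; letting $n\uparrow\infty$ with the same uniform-integrability estimates gives $w(x,i;y) = \EE_{(x,i)}[e^{-\rho\tau^*}w(X_{\tau^*},\varepsilon_{\tau^*};y)\mathbf{1}_{\{\tau^*<\infty\}}]$. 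Here one must argue $\tau^* < \infty$ $\PP_{(x,i)}$-a.s.: this follows because $X$ is a time-changed Brownian motion (with $\sigma_i$ bounded below by $\sigma_1\wedge\sigma_2>0$), hence recurrent, so it a.s.\ reaches the level $x_2^*(y) \vee x_1^*(y) < \infty$ and thus exits the bounded-below set $\mathcal{C}$. On $\{\tau^* < \infty\}$ we have $(X_{\tau^*},\varepsilon_{\tau^*}) \in \mathcal{S}$ by right-continuity of $(X,\varepsilon)$ together with the closedness of $\mathcal{S}$, whence $w(X_{\tau^*},\varepsilon_{\tau^*};y) = X_{\tau^*} - \hat c(y)$. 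Therefore $w(x,i;y) = \EE_{(x,i)}[e^{-\rho\tau^*}(X_{\tau^*}-\hat c(y))] \le u(x,i;y)$, and combining with the previous paragraph we conclude $w \equiv u$ and that $\tau^*$ attains the supremum, i.e.\ it is optimal.

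The main obstacle is the justification of the limit exchanges: applying the generalized It\^o formula to $w$ across the free boundaries $x_i^*(y)$ (legitimate since $w\in C^1$ with locally bounded second derivative, so the local-time terms vanish), verifying that the stopped stochastic integrals are genuine martingales rather than merely local ones, and — most delicately — proving the uniform integrability of $\{e^{-\rho(\tau\wedge\beta_n)}(X_{\tau\wedge\beta_n}-\hat c(y))\}_n$ so that the inequality survives the passage $n\uparrow\infty$ for an arbitrary (possibly unbounded) stopping time $\tau$. The linear-growth control on $w$ reduces this to an a priori moment bound on $\sup_n e^{-\rho(\tau\wedge\beta_n)}|X_{\tau\wedge\beta_n}|$, which is handled exactly as in the estimate \eqref{boundOS-2} of Proposition \ref{preliminary:OS}.
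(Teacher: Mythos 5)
Your proposal follows essentially the same structure as the paper's own proof: a localized It\^o (It\^o--Tanaka) expansion of $e^{-\rho t}w$, the supersolution inequalities from \eqref{FBP-1} to get $w\ge u$, uniform integrability of the stopped terms (via the linear growth of $w$ and the $L^2$-boundedness of $\int_0^{\cdot}e^{-\rho s}\sigma_{\varepsilon_s}\,dW_s$) to pass to the limit, and then equality along $\tau^*$ because $(\mathcal{G}-\rho)w=0$ on $\mathcal{C}$. The only minor difference is cosmetic: you justify $\tau^*<\infty$ a.s.\ directly via a Dambis--Dubins--Schwarz time-change (noting $\langle X\rangle_t\ge(\sigma_1^2\wedge\sigma_2^2)t\to\infty$ and one-dimensional Brownian recurrence), whereas the paper invokes the recurrence result of Theorem 4.4 in \cite{ZhuYin}; both are valid.
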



\subsection{Case (B):\, $x^*_1(y)= x^*_2(y)$}
\label{caseB}

In this section we study the case in which the two boundaries $x^*_1(y)$ and $x^*_2(y)$ coincide and are equal to some $x^*(y)$ to be found. We will find that the value function is regime-independent as well, and equals the value function that one would obtain in a model without regime switching.

We rewrite \eqref{HJB-OS-u} in the form of a free-boundary problem to find $(w(x,1;y),w(x,2;y), x^*(y))$, with $w(\cdot,i;y)\in C^1(\RR)$ and $w_{xx}(\cdot,i;y)\in L^{\infty}_{loc}(\RR)$ for any $i=1,2$, solving
\begin{align}
\label{FBP-equal}
\left\{
\begin{array}{ll}
\tfrac{1}{2}\sigma_i^2 w_{xx}(x,i;y)-\rho w(x,i;y) + \lambda_i(w(x,3-i;y)-w(x,i;y)) =0 & \text{for $x<x^*(y)$ and $i=1,2$}\\[+4pt]
w(x,i;y) = x-\theta(y) & \text{for $x \geq x^*(y)$}\\[+4pt]
\tfrac{1}{2}\sigma_i^2 w_{xx}(x,i;y)-\rho w(x,i;y) + \lambda_i(w(x,3-i;y)-w(x,i;y)) \leq 0 & \text{for a.e.\ $x \in \mathbb{R}$ and $i=1,2$}\\[+4pt]
w(x,i;y) \geq x -\theta(y), & \text{for $x \in \mathbb{R}$ and $i=1,2$}.\\[+4pt]
\end{array}
\right.
\end{align}


Recall \eqref{def:Phiequal} and that $\alpha_1 < \alpha_2 <0 <\alpha_3 < \alpha_4$ denote the solutions to the fourth-order equation $\Phi_1(\alpha)\Phi_2(\alpha) -\lambda_1\lambda_2 =0$ (cf.\ Lemma \ref{4thordereq} in Appendix \ref{app}). Then the general solution to the system of two second-order ODEs appearing in the first line of \eqref{FBP-equal} is given for any $x<x^*(y)$ by
\begin{equation}
\label{valuecontin}
\left\{
\begin{array}{ll}
w(x,1;y)=\widetilde{A}_1(y)e^{\alpha_1x}+\widetilde{A}_2(y)e^{\alpha_2x}+\widetilde{A}_3(y)e^{\alpha_3x}+\widetilde{A}_4(y)e^{\alpha_4x}\\[+4pt]
w(x,2;y)=\widetilde{B}_1(y)e^{\alpha_1x}+\widetilde{B}_2(y)e^{\alpha_2x}+\widetilde{B}_3(y)e^{\alpha_3x}+\widetilde{B}_4(y)e^{\alpha_4x},\\
\end{array}
\right.
\end{equation}
with
\beq
\label{Bjs}
\widetilde{B}_j(y)=\frac{\Phi_1(\alpha_j)}{\lambda_1}\widetilde{A}_j(y)=\frac{\lambda_2}{\Phi_2(\alpha_j)}\widetilde{A}_j(y),\quad j=1,2,3,4.
\eeq
Notice that from the expressions of $\alpha_3$ and $\alpha_4$ (see the proof of Lemma \ref{4thordereq} in Appendix \ref{app}) one has $\Phi_1(\alpha_3)>0$ and $\Phi_1(\alpha_4)<0$.
Since for $x \to -\infty$ the value function diverges at most with linear growth (cf.\ Proposition \ref{preliminary:OS}) we set $\widetilde{A}_1(y)=\widetilde{A}_2(y) = 0 = \widetilde{B}_1(y)=\widetilde{B}_2(y)$.

For $x \in [x^*(y),+\infty)$ we have from \eqref{FBP-equal}
\begin{equation}
\label{valuestopping}
w(x,1;y)=x-\theta(y) = w(x,2;y).
\end{equation}

It now only remains to find $\widetilde{A}_3(y)$, $\widetilde{A}_4(y)$ and $x^*(y)$, since $\widetilde{B}_3(y)$ and $\widetilde{B}_4(y)$ are given in terms of $\widetilde{A}_3(y)$ and $\widetilde{A}_4(y)$ through \eqref{Bjs}.
To do so, we impose that $w(\cdot, i;y)$, $i=1,2$, is continuous across $x^*(y)$ together with its first derivative (i.e.\ continuous-fit and smooth-fit conditions), and we obtain the system
\begin{equation}
\label{system-C1}
\left\{
\begin{array}{ll}
\widetilde{A}_3(y) e^{\alpha_3 x^{*}(y)}+\widetilde{A}_4(y)e^{\alpha_4 x^*(y)}=x^*(y)-\theta(y) \\ \\
\alpha_3 \widetilde{A}_3(y) e^{\alpha_3 x^{*}(y)}+\alpha_4 \widetilde{A}_4(y) e^{\alpha_4 x^*(y)}= 1\\ \\
\widetilde{B}_3(y) e^{\alpha_3 x^{*}(y)}+\widetilde{B}_4(y) e^{\alpha_4 x^*(y)}=x^*(y)-\theta(y) \\ \\
\alpha_3 \widetilde{B}_3(y) e^{\alpha_3 x^{*}(y)}+\alpha_4 \widetilde{B}_4(y) e^{\alpha_4 x^*(y)}=1.
\end{array}
\right.
\end{equation}

Solving the first two equations of \eqref{system-C1} for $\widetilde{A}_3(y)$ and $\widetilde{A}_4(y)$, one has
\beq
\label{A3A4equal}
\widetilde{A}_3(y)=\Big[\frac{\alpha_4(x^*(y)-\theta(y))-1}{(\alpha_4-\alpha_3)}\Big]e^{-\alpha_3 x^*(y)},\qquad \widetilde{A}_4(y)=\Big[\frac{1-\alpha_3(x^*(y)-\theta(y))}{(\alpha_4-\alpha_3)}\Big]e^{-\alpha_4 x^*(y)}.
\eeq
On the other hand, recalling \eqref{Bjs} and plugging $\widetilde{A}_3(y)$ and $\widetilde{A}_4(y)$ from \eqref{A3A4equal} into the third equation of \eqref{system-C1}, some simple algebra leads to
\beq
\label{xstar}
x^*(y)=\frac{\frac{1}{2}\sigma_1^2(\alpha_3+\alpha_4)}{\rho+\frac{1}{2}\sigma_1^2\alpha_3\alpha_4}+\theta(y),
\eeq
where \eqref{def:Phiequal} has also been used.

Similarly, inserting $\widetilde{A}_3(y)$ and $\widetilde{A}_4(y)$ from \eqref{A3A4equal} into the fourth equation of \eqref{system-C1} and using \eqref{def:Phiequal} one obtains
\beq
\label{xstar2}
x^*(y)=\frac{\frac{1}{2}\sigma_1^2(\alpha_3^2+\alpha_4^2+\alpha_3\alpha_4) - \rho}
{\frac{1}{2}\sigma_1^2\alpha_3\alpha_4(\alpha_3+\alpha_4)}+\theta(y).
\eeq
Equations \eqref{xstar} and \eqref{xstar2} then imply that system \eqref{system-C1} admits a solution (which is then unique) if and only if
\beq
\label{consistency}
\frac{\frac{1}{2}\sigma_1^2(\alpha_3+\alpha_4)}{\rho+\frac{1}{2}\sigma_1^2\alpha_3\alpha_4}
=\frac{\frac{1}{2}\sigma_1^2(\alpha^2_3+\alpha^2_4+\alpha_3\alpha_4) - \rho}
{\frac{1}{2}\sigma_1^2\alpha_3\alpha_4(\alpha_3+\alpha_4)}.
\eeq

Using that $(\alpha_3\alpha_4)^2 = 4[(\rho + \lambda_1)(\rho + \lambda_2) - \lambda_1\lambda_2]/\sigma_1^2\sigma_2^2$, and that $\alpha^2_3 + \alpha^2_4 = 2\sigma_1^2(\rho + \lambda_2) + 2\sigma_2^2(\rho + \lambda_1)/\sigma_1^2\sigma_2^2$ by Vieta's formulas, one can show that \eqref{consistency} is equivalent to $\sigma^2_1 = \sigma^2_2=:\sigma^2$. In such a case, it is not hard to check by direct calculations that $\alpha^2_3=2\rho/\sigma^2$ and $\alpha^2_4=2(\rho+\lambda_1+\lambda_2)/\sigma^2$. Then employing \eqref{Bjs} this in turn gives 
\beq
\label{Bjs-bis}
\widetilde{B}_3(y) = \widetilde{A}_3(y)=\frac{\sigma}{\sqrt{2\rho}}e^{-\frac{\sqrt{2\rho}}{\sigma}x^*(y)} \qquad \text{and} \qquad \widetilde{B}_4(y) = -\frac{\lambda_2}{\lambda_1}\widetilde{A}_4(y) =0.
\eeq
Moreover,
\beq
\label{bd-equal}
x^*(y) = \frac{\sigma}{\sqrt{2\rho}} + \theta(y) > \theta(y).
\eeq
Combining all the previous results, we find that for any $i=1,2$ the candidate value function is
\begin{align}
\label{eq:candidatevalue1-equal}
w(x,i;y):=
\left\{
\begin{array}{ll}
\frac{\sigma}{\sqrt{2\rho}}e^{\frac{\sqrt{2\rho}}{\sigma}(x-x^*(y))}, & \quad x \leq x^*(y),\\[+5pt]
x - \theta(y), & \quad x \geq x^*(y).
\end{array}
\right.
\end{align}

It is easily verified that $(x^*,w)$ as in \eqref{bd-equal} and \eqref{eq:candidatevalue1-equal} equal the free boundary and the value function that we would obtain in a model without regime-switching. Also, by direct calculations one can show that \eqref{bd-equal} and \eqref{eq:candidatevalue1-equal} solve \eqref{FBP-equal}. In particular, $(x^*,w)$ solve the first two lines in \eqref{FBP-equal} by construction, and they fulfill the third equation in \eqref{FBP-equal} because $x^*(y)>\theta(y)$. On the other hand, the fourth equation in \eqref{FBP-equal} follows by the convexity of $w(\cdot,i;y)$ and the fact that $w_x(x^*(y),i;y)=1$ by construction. Then by a standard verification theorem (which is left to the reader in the interest of length) one obtains the next result.
\begin{theorem}
\label{candidate-w-equal}
Assume $\sigma_1=\sigma_2$, let $x^*(y)$ be given by \eqref{bd-equal}, and $w$ as in \eqref{eq:candidatevalue1-equal}. Then the value function of \eqref{def-u} is such that $u\equiv w$.
Moreover, letting $\mathcal{C}=\{(x,i) \in \mathbb{R} \times \{1,2\}: x < x^*(y)\}$, the stopping time
\beq
\label{tau-optimal-equal}
\tau^*:=\inf\{t\geq 0: (X_{t},\varepsilon_{t})\not \in \mathcal{C}\},\quad \mathbb{P}_{(x,i)}-a.s.,
\eeq
is optimal.
\end{theorem}


\section{The Optimal Extraction Policy}
\label{sec:solSSC}

In this section we provide the solution to the finite-fuel singular stochastic control problem \eqref{value} in terms of the solution to the optimal stopping problem with regime switching \eqref{def-u}. In particular, we consider separately the two cases (I) $y \mapsto f(y)$ strictly convex on $[0,1]$, and (II) $y \mapsto f(y)$ concave on $[0,1]$ (cf.\ Assumption \ref{Asscost}). It turns out that the optimal extraction rule is qualitatively different across these two cases.

\subsection{Case (I): $y \mapsto f(y)$ strictly convex on $[0,1]$}
\label{subsec:fconvex}

Assume that $y \mapsto f(y)$ fulfills condition (I) of Assumption \ref{Asscost}. For any $y \in [0,1]$, let $\theta(y)$ in \eqref{def-u} be such that $$\theta(y):= c - \frac{f'(y)}{\rho},$$
and notice that with such a choice of $\theta$ all the results of Section \ref{sec:OS} remains valid for $y \in [0,1]$.

By Corollary \ref{cor:boundary-system} we know that $x_1^*(y) = z^*_1 + \theta(y)$ and $x_2^*(y)= z^*_2 + x_1^*(y)$ (see also \eqref{bd-equal} in the case $x^*_1(y)=x^*_2(y)=x^*(y)$). Because $y \mapsto f(y)$ is continuously differentiable and strictly convex on $[0,1]$, it follows that for any $i=1,2$, $y \mapsto x^*_i(y)$ is continuous and strictly decreasing on $[0,1]$, and it has an inverse with respect to $y$. For $i=1,2$, we then define
\begin{align}\label{def-bstar}
b^*_i(x):=
\left\{
\begin{array}{ll}
1, & x\le x^*_i(1)\\[+2pt]
 (x^*_i)^{-1}(x), & x\in(x^*_i(1),x^*_i(0))\\[+2pt]
0, & x\ge x^*_i(0),
\end{array}
\right.
\end{align}
and we observe that $b^*_i: \mathbb{R} \to [0,1]$ is continuous and decreasing (notice that also the case in which $x^*_1(y) = x^*_2(y)$ - i.e.\ case (B) of Section \ref{caseB} - can be accommodated into \eqref{def-bstar}. Indeed, in such case we simply have $b^*_1=b^*_2$).

We now provide a candidate value function for problem \eqref{value}. To this end, for $u$ as in Theorems \ref{thm:verifying} or \ref{candidate-w-equal}, we introduce the function
\beq
\label{def-U}
F(x,y,i) := \int_0^y u(x, i;z)dz - \frac{f(y)}{\rho}.
\eeq

\begin{proposition}
\label{listprops}
The function $F$ of \eqref{def-U} is such that $F(\cdot, \cdot, i) \in C^{2,1}(\mathbb{R}\times[0,1])$ for any $i=1,2$. Moreover, for $i=1,2$ there exist constants $C_i>0$ and $\kappa_i>0$ such that 
\begin{align}
\label{boundsF}
\big|F(x,y,i)\big| + \big|F_y(x,y,i)\big|\le C_i(1+|x|),\quad \big|F_x(x,y,i)\big| + \big|F_{xx}(x,y,i)\big|\le \kappa_i,
\end{align}
for $(x,y)\in \mathbb{R}\times [0,1]$.
\end{proposition}
\begin{proof}
It is easy to verify from \eqref{eq:candidatevalue1} and \eqref{eq:candidatevalue2}, and from \eqref{eq:candidatevalue1-equal} (upon recalling also Theorems \ref{thm:verifying} and \ref{candidate-w-equal}) that $u$ is of the form $u(x,i;y)=\zeta_i(y)G_i(x)+\eta_i(y)H_i(x)$ for some continuous functions $\zeta_i$, $\eta_i$, $G_i$ and $H_i$. It thus follows that $(x,y)\mapsto F(x,y,i)$  and $(x,y) \mapsto F_y(x,y,i)$ are continuous on $\mathbb{R}\times[0,1]$.
Also, from \eqref{eq:candidatevalue1} and \eqref{eq:candidatevalue2}, and from \eqref{eq:candidatevalue1-equal}, one can see that for any $x$ in a bounded set $\mathcal{K}\subset\mathbb{R}$ and for any $i=1,2$ the derivatives $|u_x|$ and $|u_{xx}|$ are at least bounded by a function $F_{\mathcal{K}}(y)\in L^1(0,1)$. It follows that to determine $F_x$ and $F_{xx}$ one can invoke the dominate convergence theorem and evaluate derivatives inside the integral in \eqref{def-U} so to obtain
\begin{align}
\label{Ux}
F_x(x,y,i) = \int_{0}^{b^*_1(x) \wedge y} u_x(x,i;z)dz + \int_{b^*_1(x) \wedge y}^{b^*_2(x) \wedge y}u_x(x,i;z)dz + \int_{b^*_2(x) \wedge y}^{y}u_x(x,i;z)dz
\end{align}
and
\begin{align}
\label{Uxx}
F_{xx}(x,y,i) = \int_{0}^{b^*_1(x) \wedge y} u_{xx}(x,i;z)dz + \int_{b^*_1(x) \wedge y}^{b^*_2(x) \wedge y}u_{xx}(x,i;z)dz,
\end{align}
where the second integrals on the right hand side of \eqref{Ux} and \eqref{Uxx} equal zero in case $b^*_1=b^*_2$.
Therefore $F(\cdot,\cdot,i) \in C^{2,1}(\mathbb{R}\times[0,1])$ for $i=1,2$ by \eqref{eq:candidatevalue1} and \eqref{eq:candidatevalue2}, \eqref{eq:candidatevalue1-equal}, Theorems \ref{thm:verifying} and \ref{candidate-w-equal}, and continuity of $b^*_i(\cdot)$ (cf.\ \eqref{def-bstar}).
Finally, bounds \eqref{boundsF} follow from \eqref{eq:candidatevalue1} and \eqref{eq:candidatevalue2}, \eqref{eq:candidatevalue1-equal}, \eqref{def-U}, \eqref{Ux} and \eqref{Uxx}.
\end{proof}

The next result shows that $F$ solves the HJB equation \eqref{HJB}.

\begin{proposition}
\label{prop6}
For all $(x,y,i) \in \mathbb{R} \times (0,1] \times \{1,2\}$, $F$ is a classical solution to \eqref{HJB}. Moreover, it satisfies the boundary condition $F(x,0,i)=0$ for $(x,i) \in \mathbb{R} \times \{1,2\}$.
\end{proposition}
\begin{proof}
First of all we observe that for any $(x,y,i) \in \mathcal{O}$ one has by \eqref{def-U} that 
\beq
\label{observation1}
F_y(x,y,i) = u(x,i;y) - \frac{f'(y)}{\rho} \geq x-c,
\eeq 
where the last inequality follows from the fact that $u(x,i;y) \geq x-\theta(y) = x - c + \frac{f'(y)}{\rho}$. In particular, for any $i=1,2$ one has equality in \eqref{observation1} on $\{(x,y) \in \mathbb{R}\times [0,1]: x \geq x^*_i(y)\}$. 

For any fixed $i=1,2$, take $y\in[0,1]$ and $x\in\mathbb{R}$ such that $F_y(x,y,i)>x-c$, i.e.~$y<b^*_i(x)$, and notice that thanks to Proposition \ref{listprops} one can write
$$(\mathcal{G}- \rho)F(x,y,1)= \int_0^y (\mathcal{G}- \rho)u(x,1;z)dz + f(y) =  f(y),$$
and
$$(\mathcal{G}- \rho)F(x,y,2)= \int_0^{y\wedge b^*_1(x)} (\mathcal{G}- \rho)u(x,2;z)dz + \int_{y \wedge b^*_1(x)}^{y\wedge b^*_2(x)} (\mathcal{G}- \rho)u(x,2;z)dz + f(y) =  f(y).$$
The last equalities in the two equations above follow from the fact that $u$ solves free-boundary problem \eqref{FBP-1-a}-\eqref{FBP-1-c} (cf.\ Theorems \ref{candidate-w} and \ref{thm:verifying}; see also Theorem \ref{candidate-w-equal} in the case $x^*_1(y)=x^*_2(y)=x^*(y)$). 

On the other hand, for arbitrary $(x,y,i)\in \mathcal{O}$ we notice that (cf.\ \eqref{listprops})
\begin{eqnarray*}
&&(\mathcal{G}- \rho)F(x,y,i)= \int_{0}^{b^*_1(x) \wedge y} (\mathcal{G}- \rho)u(x,i;z) dz + \int_{b^*_1(x) \wedge y}^{b^*_2(x) \wedge y}(\mathcal{G}- \rho)u(x,i;z)dz \nonumber \\
&& \hspace{0.5cm} + \int_{b^*_2(x) \wedge y}^{y}(\mathcal{G}- \rho)u(x,i;z) dz + f(y) \leq f(y),
\end{eqnarray*}
since, again, $u$ solves free-boundary problem \eqref{FBP-1-a}-\eqref{FBP-1-c}. Hence $F$ solves \eqref{HJB}. Moreover, recalling that $f(0)=0$, it is straightforward to see from \eqref{def-U} that $F(x,0,i) = 0$ for any $(x,i) \in \mathbb{R} \times \{1,2\}$.
\end{proof}

Satisfying \eqref{HJB} and the boundary condition $F(x,0,i)=0$ for $(x,i) \in \mathbb{R} \times \{1,2\}$, $F$ is clearly a candidate value function for problem \eqref{value}. We now introduce a candidate optimal control process. Let $(x,y,i) \in \mathcal{O}$, recall $b^*_i$ of \eqref{def-bstar} and consider the process
\beq
\label{candidateoptimalcontrol}
\nu_t^*=\Big[y - \inf_{0 \leq s < t}b^*_{\varepsilon_s}\big(X_s\big)\Big]^+, \qquad t>0, \quad
\nu_0^*= 0,
\eeq
where $[\,\cdot\,]^+$ denotes the positive part.

\begin{proposition}
\label{admissiblenustar}
The process $\nu^*$ of \eqref{candidateoptimalcontrol} is an admissible control.
\end{proposition}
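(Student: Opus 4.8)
The plan is to verify directly that $\nu^*$ defined in \eqref{candidateoptimalcontrol} satisfies all the defining requirements of the admissible set $\mathcal{A}_y$ in \eqref{admissiblecontrols}, namely: it is nonnegative, nondecreasing, left-continuous, $\mathbb{F}$-adapted, with $\nu^*_0=0$ and $y-\nu^*_t\geq 0$ for all $t\geq0$, $\mathbb{P}$-a.s. Most of these follow quite easily once one understands the structure of $t\mapsto \inf_{0\le s<t}b^*_{\varepsilon_s}(X_s)$.

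First I would record the elementary facts. Nonnegativity of $\nu^*_t$ is built into the definition via the positive part, and $\nu^*_0=0$ is imposed by convention. Since $b^*_i$ takes values in $[0,1]$ (cf.\ \eqref{def-bstar}), the running infimum $m_t:=\inf_{0\le s<t}b^*_{\varepsilon_s}(X_s)$ lies in $[0,1]$, hence $\nu^*_t=[y-m_t]^+\le y\le 1$, and in fact $y-\nu^*_t=y-[y-m_t]^+=\min\{y,m_t\}\ge 0$, which gives the finite-fuel constraint. For monotonicity: $t\mapsto m_t$ is nonincreasing because the infimum is taken over the growing family of intervals $[0,t)$; therefore $t\mapsto [y-m_t]^+$ is nondecreasing, so $\nu^*$ is nondecreasing. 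Left-continuity of $\nu^*$ follows from left-continuity of $t\mapsto m_t$: for an infimum over the half-open interval $[0,t)$ one has $\lim_{s\uparrow t}m_s=\inf_{0\le u<t}b^*_{\varepsilon_u}(X_u)=m_t$ (any point $u<t$ eventually belongs to $[0,s)$ for $s$ close enough to $t$, and conversely $[0,s)\subset[0,t)$), and composing with the continuous map $r\mapsto[y-r]^+$ preserves left-continuity.

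The one point deserving genuine care is measurability/adaptedness, and this is what I expect to be the main (though still modest) obstacle. I would argue as follows. For each fixed $s$, the map $\omega\mapsto b^*_{\varepsilon_s(\omega)}(X_s(\omega))$ is $\mathcal{F}_s$-measurable, being the composition of the continuous deterministic functions $b^*_1,b^*_2$ (continuous by the remark after \eqref{def-bstar}) with the $\mathcal{F}_s$-measurable pair $(X_s,\varepsilon_s)$. The process $s\mapsto b^*_{\varepsilon_s}(X_s)$ is moreover progressively measurable since $X$ has continuous paths, $\varepsilon$ is right-continuous with left limits, and $(x,i)\mapsto b^*_i(x)$ is continuous in $x$ for each $i$; hence $s\mapsto (X_s,\varepsilon_s)$ is progressively measurable and so is the composition. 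Because the paths $s\mapsto b^*_{\varepsilon_s}(X_s)$ are right-continuous with left limits (inheriting this from $\varepsilon$ and the continuity of $X$ and $b^*_i$), the running infimum over the \emph{open-ended} interval can be realized along a countable dense set together with the left limits, so $m_t=\inf_{s\in\mathbb{Q}\cap[0,t)}b^*_{\varepsilon_s}(X_s)\wedge \big(\text{contributions of jump left-limits before }t\big)$ is $\mathcal{F}_t$-measurable; equivalently, $m_t$ is $\mathcal{F}_{t^-}\subset\mathcal{F}_t$-measurable. Thus $\nu^*_t=[y-m_t]^+$ is $\mathcal{F}_t$-measurable for every $t$, i.e.\ $\nu^*$ is adapted. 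Combining this with the structural properties from the previous paragraph, $\nu^*\in\mathcal{A}_y$, completing the proof.
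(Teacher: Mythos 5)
Your proof is correct and takes essentially the same route as the paper: a direct verification of each defining property of $\mathcal{A}_y$, with the measurability handled via progressive measurability of $(X,\varepsilon)$ composed with the continuous map $b^*$. One small point in your favour: you correctly observe that left-continuity of $t\mapsto m_t$ is automatic from the half-open interval $[0,t)$ alone (an increasing family of sets gives a left-continuous running infimum), whereas the paper attributes it somewhat loosely to the RCLL paths of $(X,\varepsilon)$ together with continuity of $b^*$.
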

\begin{proof}
Recall \eqref{admissiblecontrols}. For any given and fixed $\omega\in\Omega$, $t\mapsto \nu^*_t(\omega)$ is clearly nondecreasing and such that $Y^{\nu^*}_t(\omega) \geq 0$, for any $t\geq 0$, since $b^*_i(x) \in [0,1]$ for any $x\in \mathbb{R}$. 
Moreover, since $(X, \varepsilon)$ is right-continuous with left-limits (cf.\ Lemma 3.6 in \cite{ZhuYin}) and $(x,i)\mapsto b^*_i(x)$ is continuous, $t \mapsto \nu^*_t(\omega)$ is left-continuous. Finally, $\mathbb{F}$-progressive measurability of $(X, \varepsilon)$ and measurability of $b^*$ imply that $\nu^*$ is $\mathbb{F}$-progressively measurable by \cite{DM}, Theorem IV.33, whence $\mathbb{F}$-adapted.
\end{proof}


Process $\nu^*$ is the minimal effort needed to have $Y^{\nu^*}_t \leq b^*_{\varepsilon_t}(X_t)$ at any time $t$. In particular it is a standard result (see, e.g., Proposition 2.7 in \cite{DeAFeMo15} and references therein for a proof in a similar setting) that $\nu^*$ of \eqref{candidateoptimalcontrol} solves the Skorokhod reflection problem
\begin{enumerate}
\item $\displaystyle Y^*_t \leq b^*_{\varepsilon_t}(X_t)$, $\mathbb{P}_{(x,y,i)}$-almost surely, for each $t > 0$;
\item $\displaystyle \int_0^T\, \mathds{1}_{\{Y^*_t < b^*_{\varepsilon_t}(X_t)\}}d\nu_t^*=0$ $\mathbb{P}_{(x,y,i)}$-almost surely, for all $T \geq 0$, 
\end{enumerate}
where $Y^*:=Y^{\nu^*}$. An illustration of the (candidate) optimal policy $\nu^*$ is provided in Figure \ref{fig:1}.

\begin{figure}[!ht]
\centering
\includegraphics[scale=0.5]{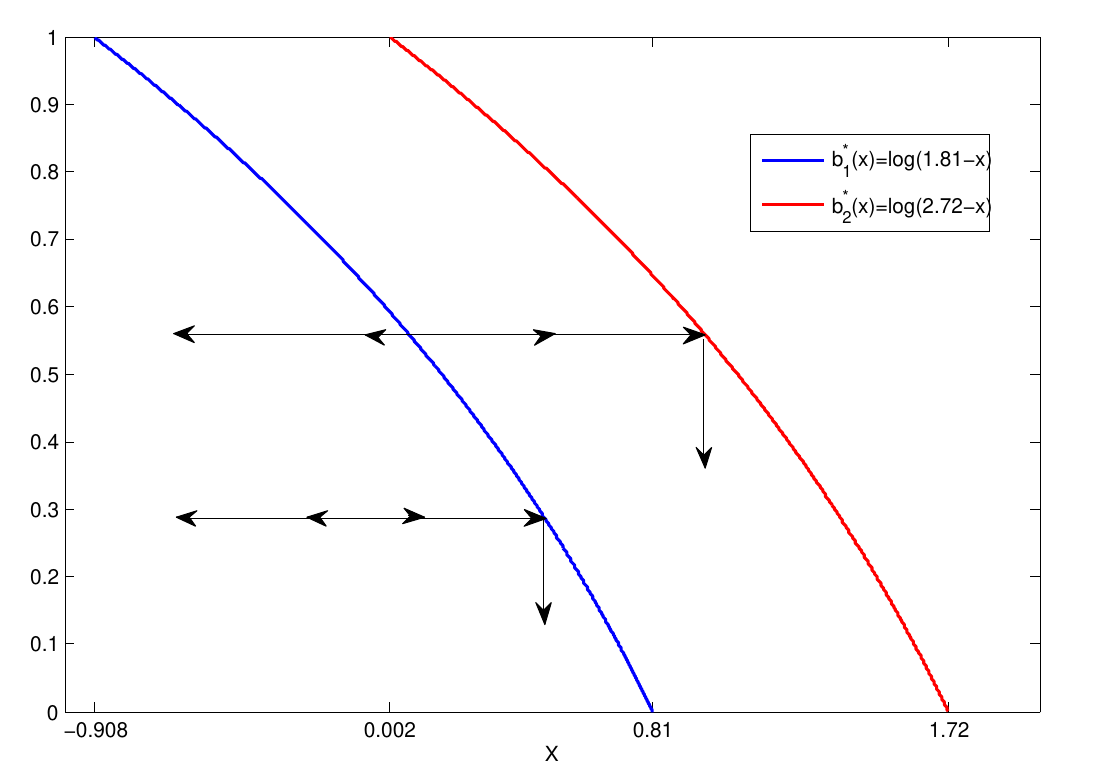}
\caption{\small 
Adopting the terminology of \cite{Guoetal}, the boundaries $b^*_i$, $i=1,2$, split the state space into the \textsl{inaction region} ($y<b^*_1(x)$), \textsl{transient region} ($b^*_1(x)<y<b^*_2(x)$) and \textsl{action region} ($y > b^*_2(x)$). When the initial state is $(x,y,i) \in \mathcal{O}$ with $y<b^*_i(x)$ one observes a Skorokhod reflection of $(X,Y^{*},\varepsilon)$ at $b^*_i$ in the vertical direction up to when all the fuel is spent. If the system is reflected at the upper boundary, at a time of regime switch $\nu^*$ prescribes an immediate jump of $Y^{*}$ from the upper to the lower boundary (whenever they are different). This plot was obtained solving with Matlab the nonlinear system \eqref{system-boundaries-2} when $f(y)=\frac{1}{3}(e^y-1)$ and with $\sigma_1=0.38$, $\sigma_2=1.9$, $\lambda_1=1.7$, $\lambda_2=0.44$, $\rho=1/3$ and $c=0.5$.}
\label{fig:1}
\end{figure}

\begin{theorem}\label{theorem8}
\textbf{[The Verification Theorem]} The control $\nu^*$ of \eqref{candidateoptimalcontrol} is optimal for problem \eqref{value}, and $F$ of \eqref{def-U} is such that $F \equiv V$.
\end{theorem}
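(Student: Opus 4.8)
The plan is to run a classical verification argument for the singular stochastic control problem \eqref{value}, using as candidate value function the map $U$ of \eqref{def-U}, which by Propositions \ref{listprops} and \ref{prop6} is a $C^{2,1}$ solution of the HJB equation \eqref{HJB} with the correct boundary condition $U(x,0,i)=0$. The verification splits into two halves: first show $U(x,y,i)\ge \mathcal{J}_{x,y,i}(\nu)$ for every admissible $\nu\in\mathcal{A}_y$, and then show that equality is attained along the candidate $\nu^*$ of \eqref{candidateoptimalcontrol}; together these give $U\equiv V$ and the optimality of $\nu^*$.

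\smallskip

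For the inequality $U\ge V$, I would fix $(x,y,i)\in\mathcal{O}$ and an arbitrary $\nu\in\mathcal{A}_y$, introduce the localizing sequence of stopping times $\beta_n:=\inf\{t\ge 0:|X_t|\ge n\}\wedge n$ (recall from the problem formulation that $\beta_n\uparrow\infty$ $\mathbb{P}_{(x,i)}$-a.s.), and apply a version of It\^o's formula valid for the $C^{2,1}$ function $U$ and the controlled degenerate semimartingale $(X,Y^\nu,\varepsilon)$ — here one must separate the continuous martingale part driven by $W$, the pure-jump part of $\varepsilon$ (whose compensated increments involve the generator term $\lambda_i(U(\cdot,3-i)-U(\cdot,i))$ already built into $\mathcal{G}$), the absolutely continuous part coming from $-\rho U\,dt$, and the singular part $-U_y\,d\nu$. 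Decomposing $d\nu_t=d\nu_t^c+\Delta\nu_t$ into continuous and jump components and using the mean value theorem on the jumps, the drift/singular terms are controlled from above using the two inequalities encoded in \eqref{HJB}: $(\mathcal{G}-\rho)U-f(y)\le 0$ handles the $dt$ part and $U_y\ge x-c$ handles the $-U_y\,d\nu$ part, so that $e^{-\rho(t\wedge\beta_n)}U(X_{t\wedge\beta_n},Y^\nu_{t\wedge\beta_n},\varepsilon_{t\wedge\beta_n})$ plus the running reward up to $t\wedge\beta_n$ is a supermartingale-type expression bounded above by $U(x,y,i)$. Taking expectations kills the martingale terms, and then I would send $n\to\infty$ and $t\to\infty$: the boundary term $e^{-\rho(t\wedge\beta_n)}U$ vanishes in expectation using the linear bound $|U(x,y,i)|\le C(1+|x|)$ from \eqref{boundsF} together with the uniform integrability established (essentially) in the proof of Proposition \ref{preliminary:OS}, and monotone/dominated convergence handles the reward integrals, yielding $U(x,y,i)\ge\mathcal{J}_{x,y,i}(\nu)$; sup over $\nu$ gives $U\ge V$.

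\smallskip

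For the reverse inequality I would repeat the same It\^o expansion with $\nu=\nu^*$ and argue that all the inequalities above become equalities. This is where the structure of $\nu^*$ as the Skorokhod reflection at $b^*_{\varepsilon}(X)$ is essential: by property 1 of the SRP the optimally controlled state $(X_t,Y^*_t,\varepsilon_t)$ stays in the region $\{y\le b^*_i(x)\}$, which is precisely where $(\mathcal{G}-\rho)U-f=0$ (since there $U_y(x,y,i)=v(x,i;y)>x-c$ corresponds to the continuation region of the auxiliary stopping problem, where the variational inequality \eqref{HJB-OS} holds with equality by Theorems \ref{thm:verifying} and \ref{thm:verifying-equal}), so the $dt$-term contributes an equality; and by property 2 of the SRP the measure $d\nu^*$ is supported on $\{Y^*_t=b^*_{\varepsilon_t}(X_t)\}$, where $U_y(x,y,i)=x-c$ by continuity of $v$ up to the stopping boundary, so the $-U_y\,d\nu^*$ term also contributes an equality — including at the jump times of $\varepsilon$, where $\nu^*$ performs a lump-sum extraction moving $Y^*$ from the old boundary down to the new one and $U_y$ equals $x-c$ along the whole jump interval. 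After the same limiting procedure (the vanishing of the boundary term and integrability again via \eqref{boundsF} and admissibility of $\nu^*$ from Proposition \ref{admissiblenustar}) one obtains $U(x,y,i)=\mathcal{J}_{x,y,i}(\nu^*)\le V(x,y,i)$, closing the loop.

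\smallskip

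The main obstacle I anticipate is the careful bookkeeping in the generalized It\^o formula for the regime-switching singular problem — in particular making rigorous the decomposition of the $d\nu$ integral into continuous and jump parts, correctly accounting for the fact that $\nu^*$ may jump exactly at the $\varepsilon$-jump times (so that one integrates $U_y$ evaluated along a nondegenerate segment in $y$ rather than at a single point), and justifying the uniform integrability needed to drop the localization and pass to the infinite-horizon limit. The coupling through $\lambda_1,\lambda_2$ is harmless here because it is absorbed into $\mathcal{G}$, but one should be slightly careful that the martingale coming from the compensated Markov-chain jumps is genuinely a martingale (true since $U(\cdot,i)$ has linear growth and the chain has bounded rates), so that it drops out upon taking expectations. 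Everything else is routine given the results already proved in Sections \ref{sec:OS} and \ref{sec:solSSC}.
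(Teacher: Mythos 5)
Your proposal is correct and follows essentially the same route as the paper: a two-step classical verification with localization by exit times from balls, It\^o's formula on the $C^{2,1}$ candidate $U$ with explicit separation of the continuous and jump parts of $d\nu$, the two HJB inequalities to bound the $dt$- and $d\nu$-terms, the linear-growth estimate \eqref{boundsF} plus $L^2$-bounds on the stochastic integral to kill the boundary term, and the two Skorokhod-reflection properties (together with the identity $(\mathcal{G}-\rho)U-f=\int_0^y[(\mathcal{G}-\rho)v(\cdot;z)-f'(z)]\,dz=0$ on $\{y\le b^*_i(x)\}$ and $U_y=v=x-c$ on $\{y=b^*_i(x)\}$) to turn the inequalities into equalities along $\nu^*$. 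The only cosmetic differences from the paper are that the paper uses the fundamental theorem of calculus rather than the mean value theorem to rewrite the jump increments as $-\int_0^{|\Delta Y_s|}U_y\,dz$, takes the double limit $R\to\infty$ then $T\to\infty$ rather than a single diagonal sequence, and in Step 2 also stops at $\vartheta:=\inf\{t\ge0:\nu^*_t=y\}$ (fuel exhaustion) before letting $R\to\infty$.
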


\begin{proof}

Since $F$ is a classical solution to the HJB equation due to Proposition \ref{prop6}, one has $F \geq V$ on $\mathcal{O}$ by Theorem \ref{1stverification}. We now show that one actually has $F = V$ on $\mathcal{O}$, and that $\nu^*$ of \eqref{candidateoptimalcontrol} is optimal for problem \eqref{value}.

If $y=0$ then $F(x,0,i) = 0 =V(x,0,i)$. Then take $(x,i) \in \mathbb{R} \times \{1,2\}$, $y\in(0,1]$, set $Y^*:=Y^{\nu^*}$ with $\nu^*$ as in \eqref{admissiblenustar}, and define $\vartheta:=\inf\big\{t\ge 0\,:\,\nu^*_t = y\big\}$ and $\tau_R:=\inf\{t\geq 0: X_t \notin (-R,R)\}$ $\mathbb{P}_{(x,i)}$-a.s., for some $R>0$. Also, let $0 \leq \eta_1 < \eta_2 < ... < \eta_{N} \leq \tau_R \wedge \vartheta$ be the random times of jumps of $\varepsilon$ in the interval $[0,\tau_R \wedge \vartheta)$ (clearly, the number $N$ of those jumps is random as well). Given the regularity of $F$, we can apply It\^o-Meyer's formula for semimartingale (\cite{Meyer}, pp.\ 278-301) to the process $(e^{-\rho t}F(X_{t}, Y^*_{t}, \varepsilon_{t}))_{t\geq 0}$ on each of the intervals $[0, \eta_1)$, $(\eta_1,\eta_2)$,...,$(\eta_N,\tau_R \wedge T)$. Piecing together all the terms we obtain
\begin{align}
\label{Ito-F-1}
F(x,y,i) =& \mathbb{E}_{(x,y,i)}\bigg[e^{-\rho\,(\tau_R\wedge\vartheta)}F(X_{\tau_R\wedge\vartheta}, Y^*_{\tau_R\wedge\vartheta}, \varepsilon_{\tau_R\wedge\vartheta})-\int_0^{\tau_R\wedge\vartheta}e^{-\rho s}(\mathcal{G}-\rho)
F(X_s,Y^*_s,\varepsilon_s)ds\bigg] \nonumber \\
&  + \mathbb{E}_{(x,y,i)}\bigg[\int_0^{\tau_R\wedge\vartheta}e^{-\rho s}F_y(X_s,Y^*_s, \varepsilon_s)d\nu_s^{*,cont}\bigg]\\
& - \mathbb{E}_{(x,y,i)}\left[\sum_{0\le s < \tau_R\wedge\vartheta}e^{-\rho s}
\left(F(X_s,Y^*_{s+}, \varepsilon_s)-F(X_s,Y^*_s,\varepsilon_s)\right)\right]. \nonumber 
\end{align}
Here $\nu^{*,cont}$ denotes the continuous part of $\nu^{*}$.

Recall now \eqref{jump}, that $(\mathcal{G}-\rho)F(x,y,i)=-f(y)$ for $y < b^*_i(x)$ and $F_y(x,y,i) = x - c$ for $y \geq b^*_i(x)$. Furthermore, note that $\nu^*$ solves the Skorokhod reflection problem, and therefore $\{t:\,d\nu^*_t(\omega)>0\} \subseteq \{t:\,Y^*_t(\omega) \geq b^*_{\varepsilon_t(\omega)}(X_t(\omega))\}$ for any $\omega \in \Omega$. Then by using all these facts we obtain from \eqref{Ito-F-1}
\begin{align}
\label{verif07}
F(x,y,i) =& \mathbb{E}_{(x,y,i)}\bigg[e^{-\rho\,(\tau_R\wedge\vartheta)}F(X_{\tau_R\wedge\vartheta}, Y^*_{\tau_R\wedge\vartheta}, \varepsilon_{\tau_R\wedge\vartheta}) - \int_0^{\tau_R\wedge\vartheta}e^{-\rho s}f(Y^*_s)ds  \\
&  +\int_0^{\tau_R\wedge\vartheta}e^{-\rho s}(X_s-c) d\nu^*_s\bigg].\nonumber
\end{align}
As $R\to\infty$, $\tau_R\to\infty$, and clearly $\tau_R\wedge\vartheta\to \vartheta$, $\mathbb{P}_{(x,y,i)}$-a.s. Moreover, we can use the linear growth property of $F$ (cf.~\eqref{boundsF}) and Lemma \ref{lemma:new} in Appendix \ref{app} to apply the dominated convergence theorem and have 
$$\lim_{R \uparrow \infty}\mathbb{E}_{(x,y,i)}\left[e^{-\rho\,(\tau_R\wedge\vartheta)}F(X_{\tau_R\wedge\vartheta}, Y^*_{\tau_R\wedge\vartheta}, \varepsilon_{\tau_R\wedge\vartheta})\right] = \mathbb{E}_{(x,y,i)}\left[e^{-\rho\vartheta}F(X_{\vartheta}, Y^*_{\vartheta}, \varepsilon_{\vartheta})\right] = 0.$$
Finally, we also notice that since $d\,\nu^*_s\equiv0$ and $f(Y^*_s)\equiv0$ for $s>\vartheta$ the integrals in \eqref{verif07} may be extended beyond $\vartheta$ up to $+\infty$ to get
\begin{align}
\label{verif08}
F(x,y,i) =& \mathbb{E}_{(x,y,i)}\bigg[\int_0^{\infty}e^{-\rho s}(X_s-c) d\nu^*_s -\int_0^{\infty}e^{-\rho s}
f(Y^*_s)ds\bigg]=\mathcal{J}_{x,y,i}(\nu^*).
\end{align}
Then $F \equiv V$ and $\nu^*$ is optimal.
\end{proof}

\subsection{Case (II): $y \mapsto f(y)$ concave on $[0,1]$}
\label{subsec:fconcave}

Assume now that $y \mapsto f(y)$ satisfies condition (II) in Assumption \ref{Asscost}, and for $y \in (0,1]$ take $\theta(y)$ in \eqref{def-u} such that $$\theta(y):= c - \frac{1}{\rho}\frac{f(y)}{y}.$$

Recall now $u$ of \eqref{def-u}, and for any $(x,y,i)\in \mathcal{O}$ define the function
\beq
\label{def-W-concave}
W(x,y,i):=y u(x,i;y) - \frac{1}{\rho}f(y).
\eeq
The next result shows that $W$ identifies with a suitable solution to the HJB equation \eqref{HJB}.

\begin{proposition}
\label{prop:Wconcave}
One has that $W(x,0,i)=0$ for all $(x,i) \in \mathbb{R} \times \{1,2\}$, and there exists $K>0$ such that $|W(x,y,i)| \leq K(1 + |x|)$ on $\mathcal{O}$. Moreover, for any $i=1,2$ $W(\cdot, \cdot, i) \in C^0(\mathbb{R} \times [0,1]) \cap C^{1,1}(\mathbb{R} \times (0,1])$ with $W_{xx}(\cdot,\cdot,i) \in L^{\infty}_{loc}(\mathbb{R} \times (0,1])$, and it satisfies the HJB equation \eqref{HJB} in the a.e.\ sense.
\end{proposition}
\begin{proof}
We provide a proof only for $W(x,y,1)$ in the case $x^*_1(y)< x^*_2(y)$, since similar arguments can be employed to deal with all the other cases. 
\medskip

\emph{Step 1.} By Proposition \ref{preliminary:OS} (see in particular the last line in \eqref{boundOS-2}) we can write
\beq
\label{stimaW}
|W(x,y,1)| \leq y |u(x,1;y)| + \frac{1}{\rho}f(y) \leq y\big[2|\theta(y)| + \kappa(1 + |x|)\big] \leq y\big[2c + \kappa(1 + |x|)\big] + \frac{3}{\rho}f(y),
\eeq
for some $\kappa>0$. Taking limit as $y \downarrow 0$, and recalling that $f(0)=0$, we obtain $W(x,0,i)=0$ for all $(x,i) \in \mathbb{R} \times \{1,2\}$. Also, from \eqref{stimaW} we see that the monotonicity of $f(\,\cdot\,)$ and the fact that $y \leq 1$ imply that there exists $K>0$ such that $|W(x,y,i)| \leq K(1 + |x|)$ on $\mathcal{O}$.
\medskip

\emph{Step 2.} As for the claimed regularity of $W(\cdot,\cdot,1)$, one has from \eqref{def-W-concave} that $W \in C^{0,0}(\mathbb{R}\times [0,1])$. Also, from \eqref{eq:candidatevalue1-equal} and Theorem \ref{thm:verifying} it follows that $W_x(\cdot,\cdot,1)$ is uniformly continuous on open sets of the form $(-R,R) \times (\delta,1)$ for $\delta>0$ and arbitrary $R>0$. Hence $W_x(\cdot,\cdot,1)$ has a continuous extension to $\mathbb{R} \times (0,1]$ that we denote again by $W_x(\cdot,\cdot,1)$. Moreover, $W_{xx}(\cdot, \cdot, 1) \in L^{\infty}_{loc}(\mathbb{R} \times (0,1])$. 

We now prove that $W_y(\cdot,\cdot,1) \in C^{0}(\mathbb{R} \times (0,1])$. A direct differentiation of \eqref{def-W-concave}, and the use of \eqref{eq:candidatevalue1-equal} yield for any $y \in [\delta,1]$, $\delta>0$ arbitrary,
\begin{align}
\label{Wy}
&& W_y(x,y,1) = u(x,1;y) + y u_y(x,1;y) - \frac{1}{\rho}f'(y)\nonumber \\
&& = 
\left\{
\begin{array}{ll}
A_3(y)e^{\alpha_3 x}[1 - \alpha_3 y \theta'(y)] + A_4(y)e^{\alpha_4 x}[1 - \alpha_4 y \theta'(y)] - \frac{1}{\rho}f'(y) & \text{for $x<x^*_1(y)$}\\[+4pt]
x-c & \text{for $x>x^*_1(y)$}.\\[+4pt]
\end{array}
\right.
\end{align}
By using \eqref{A3-A4} and exploiting the continuity of $x^*_1(\,\cdot\,)$ (due to continuity of $\theta(\,\cdot\,)$), it can be checked that $y\mapsto W_y(x,y,1)$ is continuous on $[\delta,1]$ for any $x \in \mathbb{R}$. Also, one has that $x\mapsto W_y(x,y,1)$ is continuous on $\mathbb{R}$ uniformly with respect to $y\in [\delta,1]$. In particular, by using once more the expressions for $A_3(y)$ and $A_4(y)$ (cf.\ \eqref{A3-A4}), one has $\lim_{\zeta \downarrow 0}W_y(x^*_1(y)-\zeta,y,1) = x^*_1(y) - c$, uniformly with respect to $y\in [\delta,1]$. Hence $W_y(\cdot,\cdot,1)$ is continuous on $\mathbb{R} \times (0,1]$ by arbitrariness of $\delta>0$.
\medskip

\emph{Step 3.} We here show that $W_y(x,y,1) \geq x - c$ for any $(x,y) \in \mathbb{R} \times (0,1]$. Since this is clearly true on $x>x^*_1(y)$ (cf.\ \eqref{Wy}), we consider only $x<x^*_1(y)$. We show that $W_{yx}(x,y,1) \leq 1$ on $\{(x,y)\in \mathbb{R}\times (0,1]: x < x^*_1(y)\}$, as this fact together with $W_y(x^*_1(y)-,y,1)= x^*_1(y) - c$ implies that $W_y(x,y,1) \geq x - c$ on that set. By differentiating $W_y(x,y,1)$ with respect to $x$ on $\{(x,y)\in \mathbb{R}\times (0,1]: x < x^*_1(y)\}$ one finds that
$$W_{yx}(x,y,1) -1 = u_x(x,1;y) - 1 + y u_{yx}(x,1;y).$$
Theorem \ref{thm:verifying} together with \emph{Step 2} of the proof of Theorem \ref{candidate-w} imply that $u_x(x,1;y) - 1 \leq 0$ for any $x < x^*_1(y)$, $y \in (0,1]$. Moreover, recalling that $x^*_1(y) = z^*_1 + \theta(y)$ (cf.\ Corollary \ref{cor:boundary-system}), it follows from \eqref{eq:candidatevalue1-equal} that $y u_{yx}(x,1;y) = - y \theta'(y) u_{xx}(x,1;y)$ for any $x < x^*_1(y)$ and $y \in (0,1]$. However, by Theorem \ref{thm:verifying} and \emph{Step 2} of the proof of Theorem \ref{candidate-w} we have $u_{xx}(x,1;y) \geq 0$ for $x < x^*_1(y)$, whereas 
\begin{equation}
\label{ythetaprime}
- y \theta'(y) = \frac{1}{\rho}\left[\frac{f'(y)y - f(y)}{y}\right] \leq 0,
\end{equation}
by the assumed concavity of $f$. Hence $W_{yx}(x,y,1) -1 \leq 0$ on $\{(x,y)\in \mathbb{R}\times (0,1]: x < x^*_1(y)\}$, and therefore $W_y(x,y,1) \geq x - c$ on that set.
\medskip

\emph{Step 4.} By Theorems \ref{candidate-w} and \ref{thm:verifying} one has that $(u(x,1;y), u(x,2;y),x^*_1(y), x^*_2(y))$ solve free-boundary problem \eqref{FBP-1-a}-\eqref{FBP-1-c}, and in particular $(\mathcal{G}-\rho)u(x,1;y) \leq 0$ for a.e.\ $x \in \mathbb{R}$ and all $y \in (0,1]$, and with equality for $x < x^*_1(y)$. It thus follows from \eqref{def-W-concave} that $(\mathcal{G}-\rho)W(x,1;y) \leq f(y)$ for a.e.\ $x \in \mathbb{R}$ and for any $y \in (0,1]$, with equality for $x < x^*_1(y)$.
\medskip

Combining the results of the previous steps, the proof is completed.

\end{proof}

Recall that the stopping time
\beq
\label{optimaltimecase3}
\tau^*=\inf\big\{t\ge 0\,:\,X_t \geq x^*_{\varepsilon_t}(y)\big\}, \quad \mathbb{P}_{(x,i)}-a.s.
\eeq
is optimal for \eqref{def-u}, and for any $y\in (0,1]$ define the admissible extraction rule
\begin{align}
\label{op-contr01}
\nu^{\star}_t:=\left\{
\begin{array}{ll}
0, & t\le \tau^*,\\
y, & t>\tau^*.
\end{array}
\right.
\end{align}

\noindent This policy prescribes to instantaneously deplete the reserve at time $\tau^*$. 

\begin{theorem}
\label{thm-opt-c}
The admissible control $\nu^{\star}$ of \eqref{op-contr01} is optimal for problem \eqref{value} and $W\equiv V$.
\end{theorem}
\begin{proof}
Since $W$ solves the HJB equation in the a.e.\ sense due to Proposition \ref{prop6}, one has $W \geq V$ on $\mathcal{O}$ by Theorem \ref{1stverification}. We now show that one actually has $W = V$ on $\mathcal{O}$, and that $\nu^{\star}$ of \eqref{op-contr01} is optimal for problem \eqref{value}.

Let $(x,y,i) \in \mathbb{R} \times (0,1] \times \{1,2\}$, and set $Y^{\star}_t:=Y^{y,\nu^{\star}}_t=y -\nu^{\star}_t$, with $\nu^{\star}$ as in \eqref{op-contr01}. Given the regualrity of $W$, we can apply It\^o-Meyer's formula for semimartingales (cf.\ \cite{Meyer}, pp.\ 278-301) following the approximation argument discussed at the beginning of the proof of Theorem \ref{1stverification}, and then we find that
\begin{align}
\label{Ito}
W(x,y,i) =& \mathbb{E}_{(x,y,i)}\bigg[e^{-\rho\tau^*}W(X_{\tau^*},Y^{\star}_{\tau^*}, \varepsilon_{\tau^*})-\int_0^{\tau^*}e^{-\rho s}f(Y^{\star}_s)ds\bigg] \nonumber \\
&  + \mathbb{E}_{(x,y,i)}\bigg[\int_0^{\tau^*}e^{-\rho s}W_y(X_s,Y^{\star}_s, \varepsilon_s)d\nu_s^{\star,cont}\bigg] \\
& - \mathbb{E}_{(x,y,i)}\bigg[\sum_{0\le s < \tau^*}e^{-\rho s}
\Big(W(X_s,Y^{\star}_{s+}, \varepsilon_s)-W(X_s,Y^{\star}_s,\varepsilon_s)\Big)\bigg] \nonumber \\
=& \mathbb{E}_{(x,y,i)}\bigg[e^{-\rho\tau^*}W(X_{\tau^*},Y^{\star}_{\tau^*}, \varepsilon_{\tau^*})-\int_0^{\tau^*}e^{-\rho s}f(Y^{\star}_s)ds\bigg]. \nonumber
\end{align}
Here $\nu^{\star,cont}$ denotes the continuous part of $\nu^{\star}$. Moreover, we have used that $(\mathcal{G}-\rho)W(X_s,Y^{\star}_s, \varepsilon_s)=f(Y^{\star}_s)$ for any $s \leq \tau^*$, and that the terms in the second and third line of \eqref{Ito} equal zero because $(X_s,Y^{\star}_s,\varepsilon_s)=(X_s,y,\varepsilon_s)$ for $s\le\tau^*$.

On the other hand, \eqref{op-contr01} and the optimality of $\tau^*$ for problem \eqref{def-u} imply that
\begin{align}\label{opt-C01}
\mathbb{E}_{(x,y,i)}&\left[e^{-\rho\tau^*}W(X_{\tau^*},Y^{\star}_{\tau^*}, \varepsilon_{\tau^*})\right]=\mathbb{E}_{(x,y,i)}\left[e^{-\rho\tau^*}W(X_{\tau^*},y, \varepsilon_{\tau^*})\right]\nonumber\\
=& \mathbb{E}_{(x,y,i)}\Big[e^{-\rho\tau^*}\Big(yu(X_{\tau^*},y, \varepsilon_{\tau^*}) - \frac{1}{\rho}f(y)\Big)\Big] = \mathbb{E}_{(x,y,i)}\Big[e^{-\rho\tau^*}\Big(yX_{\tau^*} - y\theta(y)- \frac{1}{\rho}f(y)\Big)\Big] \nonumber\\
=&\mathbb{E}_{(x,y,i)}\left[e^{-\rho\tau^*}(X_{\tau^*} -c )y\right]=\mathbb{E}_{(x,y,i)}\left[\int^\infty_0{e^{-\rho s}(X_s - c) d\nu^{\star}_s}\right].
\end{align}
Also, 
\begin{align}
\label{opt-C02}
\mathbb{E}_{(x,y,i)}\bigg[\int^{\tau^*}_0{e^{-\rho s} f(Y^{\star}_s)ds}\bigg]=\mathbb{E}_{(x,y,i)}\bigg[\int^{\infty}_0 {e^{-\rho s} f(Y^{\star}_s)ds}\bigg],
\end{align}
since $f(Y^{\star}_s) = f(0)$ for any $s>\tau^*$, and $f(0)=0$ by assumption. 

Now, using \eqref{opt-C01} and \eqref{opt-C02} in the last line of \eqref{Ito} gives $W(x,y,i) = \mathcal{J}_{x,y,i}(\nu^{\star}) \leq V(x,y,i)$. Hence, $W = V$ and $\nu^{\star}$ is optimal.
\end{proof}

\begin{remark}
\label{linearcase}
It is worth noticing that the results of this subsection also hold in the case of a running cost function of the form $f(y)=\alpha y$, for some $\alpha \geq 0$. In particular, in such a case $\theta(y)=c - \frac{\alpha}{\rho}$ and does not depend on $y$, so that also the value function $u$ of the auxiliary optimal stopping problem is $y$-independent. It thus follows that $W$ of \eqref{def-W-concave} reads as $W(x,y,i)=yu(x,i) - \frac{\alpha}{\rho}$, and it is immediate to see that it satisfies the HJB equation \eqref{HJB} in the a.e.\ sense.

In fact, when $f(y)=\alpha y$, $\alpha \geq 0$, the optimality of the policy ``instantaneously deplete the reserve as soon as the spot price is sufficiently high" could be expected by noticing that simple algebra and an integration by parts allow to rewrite functional \eqref{functional} as
$$\mathcal{J}_{(x,y,i)}(\nu)=-\frac{\alpha y}{\rho} + \mathbb{E}_{(x,y,i)}\bigg[\int_0^{\infty} e^{-\rho t} \Big(X_t - c + \frac{\alpha}{\rho}\Big)d\nu_t\bigg],\quad (x,y,i) \in \mathcal{O},\,\,\nu\in \mathcal{A}_y,$$
which is linear with respect to the control variable. 
\end{remark}

\begin{remark}
\label{shorttermpayoff}
Although $V(x,0,i)=0$ for $(x,i) \in \mathbb{R}\times (0,1)$, if $\lim_{y \downarrow 0} f'(y) = + \infty$ (Inada condition) one has $V(x,y,i)<0$ for $y$ small enough and for all $x \geq x^*_i(y)$ and $i=1,2$. To see this first of all notice that $x_i^*(y) = const. + \theta(y)$ (see the proof of Corollary \ref{cor:boundary-system}) and the Inada condition yield by de l'H\^opital rule that $\lim_{y \downarrow 0} x^*_i(y) = - \infty$. This is particular implies that for $y$ small enough and for all $x \geq x^*_i(y)$ and $i=1,2$ one has $V(x,y,i) = y\big(x^*_{i}(y) - c \big) < 0$. 
\end{remark}


\section{A Comparison to the No-Regime-Switching Case}
\label{comparison}

It is quite immediate to solve our optimal extraction problem when there is no regime switching. In particular, in this case it can be checked that for any $(0,1]$ the optimal extraction boundary is 

\begin{align}
\label{xcancello}
x^{\scriptstyle{\#}}(y):= \frac{\sigma}{\sqrt{2\rho}} + c + \theta(y) = 
\left\{
\begin{array}{ll}
\frac{\sigma}{\sqrt{2\rho}} + c - \frac{1}{\rho}f'(y) & \text{if $f$ satisfies (I) of Assumption \ref{Asscost},}\\[+4pt]
\frac{\sigma}{\sqrt{2\rho}} + c - \frac{1}{\rho}\frac{f(y)}{y} & \text{if $f$ satisfies (II) of Assumption \ref{Asscost}.}\\[+4pt]
\end{array}
\right.
\end{align}
Consequently, if $f$ satisfies (I) of Assumption \ref{Asscost}, and in particular it is strictly convex on $[0,1]$, the optimal extraction rule reads as
\beq
\label{extraction1reg}
\nu^{\scriptstyle{\#}}_t:=\Big[y - \inf_{0 \leq s < t}b^{\scriptstyle{\#}}\big(X_s\big)\Big]^+, \qquad t>0, \quad
\nu^{\scriptstyle{\#}}_0= 0,
\eeq
where $b^{\scriptstyle{\#}}(\cdot)$ denotes the inverse of $x^{\scriptstyle{\#}}(\cdot)$.
On the other hand, if $f$ satisfies (II) of Assumption \ref{Asscost}, and therefore it is concave on $[0,1]$, it is optimal to extract according to the following policy
\begin{align}
\label{op-contr-noregime-concave}
\nu^{\scriptstyle{\#}}_t:=\left\{
\begin{array}{ll}
0, & t\le \tau^{\scriptstyle{\#}},\\
y, & t>\tau^{\scriptstyle{\#}},
\end{array}
\right.
\end{align}
with $\tau^{\scriptstyle{\#}}:=\inf\big\{t\ge 0\,:\,X_t \geq x^{\scriptstyle{\#}}(y)\big\}$. 

A first observation that is worth making is that $x^{\scriptstyle{\#}} = x^*$, with $x^*$ as in \eqref{bd-equal}. To understand this, recall that in Section \ref{caseB} we have obtained that the two regime-dependent boundaries $x^*_i$, $i=1,2$, coincide and are given by \eqref{bd-equal} if and only if $\sigma_1 = \sigma_2$. In such case the price process does not jump and it therefore behaves as if we had not regime switching. It is then reasonable to obtain for such setting the same optimal selling price that we would obtain in absence of regime shifts.

Although qualitatively similar to \eqref{extraction1reg}, the optimal extraction rule \eqref{candidateoptimalcontrol} shows an important feature which is not present in the single regime case. Indeed, $\nu^*$ of \eqref{candidateoptimalcontrol} jumps at the moments of regime shifts from state $2$ to state $1$, thus implying a lump-sum extraction at those instants. This fact is not observed in \eqref{extraction1reg} where a jump can happen only at initial time. We also refer to the detailed discussion in \cite{Guoetal}.

It is also interesting to see how the presence of regime shifts is reflected into the optimal extraction boundaries. We study this in case (I) (i.e.\ for a strictly convex running cost function), and our findings are illustrated in Figure \ref{fig:2}.
\begin{figure}[!ht]
\centering
\includegraphics[scale=0.5]{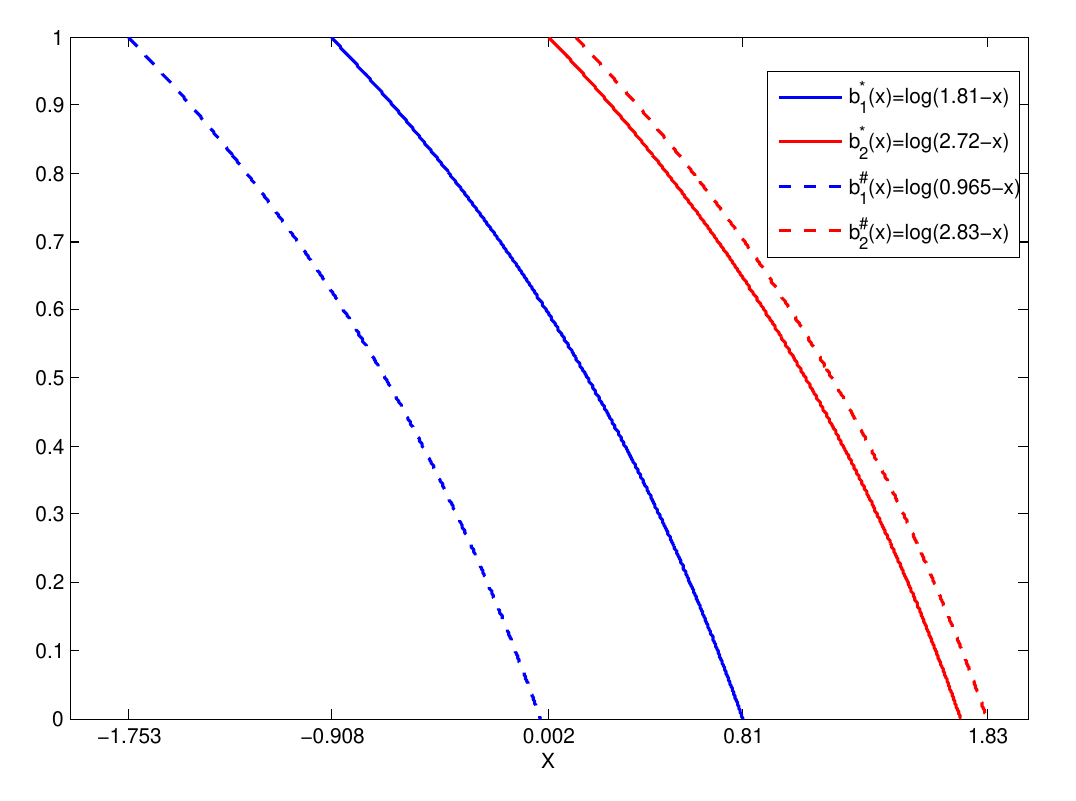}
\caption{\small The dashed curve $b^{\scriptstyle{\#}}_i(x)$, $i=1,2$, is the optimal extraction boundary \eqref{xcancello} of the single regime case when the volatility is $\sigma_i$. The solid curves are the optimal extraction boundaries $(b^*_1,b^*_2)$ when there is regime switching in the spot price process. To generate this plot with Matlab we have taken $f(y)=\frac{1}{3}(e^y-1)$ and with $\sigma_1=0.38$, $\sigma_2=1.9$, $\lambda_1=1.7$, $\lambda_2=0.44$, $\rho=1/3$ and $c=0.5$.}
\label{fig:2}
\end{figure}
There we take the strictly convex running cost $f(y)=\frac{1}{3}(e^y-1)$, and we plot the optimal boundaries in the case of regime switching, $b^*_i$, $i=1,2$ (solid curves), and in the case of a single regime, $b^{\scriptstyle{\#}}_i$ with volatility $\sigma_i$ (dashed curves), $i=1,2$. Taking $\sigma_1 < \sigma_2$ we observe, that under macroeconomic cycles, the value at which the reserve level should be kept is higher than the one at which it would be kept if the volatility were always $\sigma_1$. On the other hand, the value at which the reserve level should be maintained when business cycles are present, is lower than the one at which it would be kept if the volatility were always $\sigma_2$. To some extent, this fact can be thought of as an \emph{average effect} of the regime switching. For example, if the market volatility assumes at any time the highest value possible (i.e.\ it is always equal to $\sigma_2$), then the company would be more reluctant to extract and sell the commodity in the spot market relative to the case in which the volatility could jump to the lower value $\sigma_1$. A symmetric argument applies to explain $b^{\scriptstyle{\#}}_1 < b^{*}_i$, $i=1,2$.

\bigskip

\textbf{Acknowledgments.} We thank two anonymous Referees and an anonymous Associate Editor for their pertinent comments which helped a lot to improve previous versions of this paper. The first named author thanks Maria B.\ Chiarolla for having introduced him to the literature on optimal extraction problems under regime switching.


\appendix
\section{Some Proofs from Section \ref{sec:OS}}
\label{someproofs}
\renewcommand{\theequation}{A-\arabic{equation}}

\noindent\emph{Proof of Proposition \ref{preliminary:OS}}
\vspace{0.15cm}
The first claim immediately follows by taking the admissible $\tau=0$. 
As for the second property, let $\tau$ be an $\mathbb{F}$-stopping time and notice that by an integration by parts we can write 
\beq
\label{boundOS-1}
e^{-\rho \tau}(X_{\tau}-\theta(y)) = (x- \theta(y)) - \int_0^{\tau} \rho e^{-\rho s} \big(X_s-\theta(y)\big) ds + \int_0^{\tau} e^{-\rho s} \sigma_{\varepsilon_s}dW_s.
\eeq
Denoting $M_t:=\int_0^{t} e^{-\rho s} \sigma_{\varepsilon_s}dW_s$, $t\geq 0$, and recalling the boundedness of $\sigma_{\varepsilon_{\cdot}}$, $M$ is uniformly bounded in $L^2(\Omega,\mathbb{P}_{(x,i)})$, and therefore $\mathbb{P}_{(x,i)}$-uniformly integrable. Hence, taking expectations in \eqref{boundOS-1}, applying the optional stopping theorem (see Theorem 3.2 in \cite{RevuzYor}), and then taking absolute values we obtain
\begin{align}
\label{boundOS-2}
& \Big|\mathbb{E}_{(x,i)}\Big[e^{-\rho \tau}(X_{\tau}-\theta(y))\Big]\Big| \leq |x| + |\theta(y)| + \mathbb{E}_{(x,i)}\bigg[\int_0^{\infty} \rho e^{-\rho s} \big|X_s-\theta(y)\big| ds\bigg] \nonumber \\
& \leq 2(|x| + |\theta(y)|) + \int_0^{\infty} \rho e^{-\rho s}\mathbb{E}_{(x,i)}\bigg[\Big|\int_0^s \sigma_{\varepsilon_u}dW_u\Big|^2\bigg]^{\frac{1}{2}} ds \\
& \leq 2(|x| + |\theta(y)|) + (\sigma_1^2 \vee \sigma_2^2)^{\frac{1}{2}}\int_0^{\infty} \rho \sqrt{s} e^{-\rho s} ds \nonumber \leq K(y)(1 + |x|), 
\nonumber
\end{align}
for some $K(y)>0$. Equation \eqref{dyn:X}, Tonelli's Theorem and H\"older's inequality imply the second step above, whereas the third step is guaranteed by It\^o's isometry. The second claim of the proposition then easily follows from \eqref{boundOS-2}.
\ep

\vspace{0.25cm}

\noindent\emph{Proof of Theorem \ref{candidate-w}}
\vspace{0.15cm}

\emph{Step 1.}\, The fact that $w(\cdot, i;y) \in C^1(\mathbb{R})$ for $i=1,2$ follows by construction. It is also easy to verify from \eqref{eq:candidatevalue1} and \eqref{eq:candidatevalue2} that $w(\cdot, i;y)$, $i=1,2$, grows at most linearly and that $w_{xx}(\cdot,i;y)$ is bounded on any compact subset of $\mathbb{R}$.
\vspace{0.25cm}

We now show that $(w(x,1;y), w(x,2;y), x^*_1(y), x^*_2(y))$ solve free-boundary problem \eqref{FBP-1-a}-\eqref{FBP-1-c}.
Since $(w(x,1;y), w(x,2;y), x^*_1(y), x^*_2(y))$ satisfy \eqref{FBP-1-a} and \eqref{FBP-1-b} by construction, then it suffices to prove that also \eqref{FBP-1-c} is fulfilled. This part of the proof requires several estimates and it is organized in the next steps. In particular, \emph{Step 2}, \emph{Step 3} and \emph{Step 4} below are devoted to show that $w(x,i;y) \geq x -\theta(y)$ for $x \in \mathbb{R}$ and $i=1,2$. On the other hand, \emph{Step 5} shows that $\tfrac{1}{2}\sigma_i^2 w_{xx}(x,i;y)-\rho w(x,i;y) + \lambda_i(w(x,3-i;y)-w(x,i;y)) \leq 0$ for a.e.\ $x \in \mathbb{R}$ and for any $i=1,2$. 
\vspace{0.25cm}

\emph{Step 2.}\, Here we show that $w(x,1;y) \geq x - \theta(y)$ for any $x \in \mathbb{R}$. This is clearly true with equality by \eqref{eq:candidatevalue1} for any $x\geq x^*_1(y)$. To prove the claim when $x < x^*_1(y)$ we show that $w(\cdot,1;y)$ is convex therein. Indeed such property, together with the fact that $w_x(x^*_1(y),1;y) - 1 =0$, implies that $w_x(x,1;y) - 1 \leq 0$ for any $x < x^*_1(y)$. Hence, $w(x,1;y) \geq x - \theta(y)$ for $x < x^*_1(y)$ since also $w(x^*_1(y),1;y) - (x^*_1(y)-\theta(y)) =0$.

To complete, we thus need to show that $w(\cdot,1;y)$ is convex on $x < x^*_1(y)$. That is accomplished in the following. For any $x<x^*_1(y)$ we have from \eqref{eq:candidatevalue1}
\beq
\label{w1convex-1}
w_{xx}(x,1;y)(\alpha_4-\alpha_3)=\alpha_3^2(\alpha_4(x_1^*(y)-\theta(y))-1)e^{\alpha_3(x-x_1^*(y))}
+\alpha_4^2(1-\alpha_3(x_1^*(y)-\theta(y)))e^{\alpha_4(x-x_1^*(y))},
\eeq
and we want to prove that $w_{xx}(x,1;y)\geq 0$. To this end notice that some algebra gives
\begin{equation}
\label{w1convex-3}
\alpha_3^2(\alpha_4(x_1^*(y)-\theta(y))-1) +\alpha_4^2(1-\alpha_3(x_1^*(y)-\theta(y)))=(\alpha_4-\alpha_3)\Big[\alpha_4+\alpha_3-\alpha_3\alpha_4(x_1^*(y)-\theta(y))\Big],
\end{equation}
and also
\beq
\label{ex-ass-1}
\displaystyle  - \frac{1}{a_3}\Big(\frac{\rho}{\rho+\lambda_2}+ a_4\Big) - \frac{1}{\alpha_3} \leq \frac{1}{\alpha_4}. 
\eeq
Then recall that $x_1^*(y)-\theta(y) = z_1^*$, use the upper bound for $z^*_1$ given in \eqref{w1convex-2-a} and \eqref{ex-ass-1} into \eqref{w1convex-3}, to obtain $(\alpha_4-\alpha_3)[\alpha_4+\alpha_3-\alpha_3\alpha_4(x_1^*(y)-\theta(y))] \geq 0$.
By \eqref{w1convex-3} the latter implies that
$$\alpha_4^2(1-\alpha_3(x_1^*(y)-\theta(y))) \geq - \alpha_3^2(\alpha_4(x_1^*(y)-\theta(y))-1),$$
which substituted back into \eqref{w1convex-1} yields
\beq
\label{w1convex-4}
w_{xx}(x,1;y)(\alpha_4-\alpha_3) \geq \alpha_3^2(\alpha_4(x_1^*(y)-\theta(y))-1)\Big[e^{\alpha_3(x-x_1^*(y))}
-e^{\alpha_4(x-x_1^*(y))}\Big].
\eeq
But now the right hand-side of \eqref{w1convex-4} is nonnegative due to \eqref{w1convex-2-a}, \eqref{ex-ass-1}, and the fact that $\alpha_3 < \alpha_4$ but $x<x^*_1(y)$. Hence $w_{xx}(x,1;y)\geq 0$ for any $x < x^*_1(y)$, and therefore $w(\cdot,1;y)$ is convex on that region.
\vspace{0.25cm}

\emph{Step 3.}\, In this step we prove that $w(x^*_1(y),2;y) \geq x^*_1(y)-\theta(y)$ and $w_{x}(x^*_1(y),2;y) \leq 1$. These estimates will be needed in the next step to show that $w(x,2;y) \geq x-\theta(y)$ for any $x\in \mathbb{R}$. 

From \eqref{eq:candidatevalue2} and using that $B_3(y)=\frac{\Phi_1(\alpha_3)}{\lambda_1}A_3(y)$, $B_4(y)=\frac{\Phi_1(\alpha_4)}{\lambda_1}A_4(y)$, with $A_3(y)$ and $A_4(y)$ as in \eqref{A3-A4}, one easily finds
\beq
\label{w2x1}
w(x^*_1(y),2;y)=\frac{\Phi_1(\alpha_3)[\alpha_4(x_1^*(y)-\theta(y))-1]}{\lambda_1(\alpha_4-\alpha_3)}
+\frac{\Phi_1(\alpha_4)[1-\alpha_3(x_1^*(y)-\theta(y))]}{\lambda_1(\alpha_4-\alpha_3)}\nonumber
\eeq 
and
\beq
\label{w2xx1}
w_x(x^*_1(y),2;y)=\frac{\alpha_3\Phi_1(\alpha_3)[\alpha_4(x_1^*(y)-\theta(y))-1]}{\lambda_1(\alpha_4-\alpha_3)}
+\frac{\alpha_4\Phi_1(\alpha_4)[1-\alpha_3(x_1^*(y)-\theta(y))]}{\lambda_1(\alpha_4-\alpha_3)}.\nonumber
\eeq
Recalling that $\Phi_i(z)= -\frac{1}{2}\sigma_i^2 z^2 + \rho + \lambda_i$, $i=1,2$, a simple calculation yields
\beq
\label{w2x1-bis}
w(x^*_1(y),2;y)=\frac{-\frac{1}{2}\sigma_1^2(\alpha_3+\alpha_4)+(x_1^*(y)-\theta(y))
(\frac{1}{2}\sigma_1^2\alpha_3\alpha_4+\rho+\lambda_1)}{\lambda_1}
\eeq
and
\beq
\label{w2xx1-bis}
w_x(x^*_1(y),2;y)=\frac{\alpha_4\Phi(\alpha_4)-\alpha_3\Phi(\alpha_3)}{\lambda_1(\alpha_4-\alpha_3)}
+\frac{\alpha_3\alpha_4\sigma_1^2(x_1^*(y)-\theta(y))(\alpha_4+\alpha_3)}{2\lambda_1}.
\eeq
It is now matter of algebraic manipulations to show that 
\beq
\label{1}
\frac{\sigma_1^2(\alpha_3+\alpha_4)}{\sigma_1^2\alpha_3\alpha_4+2\rho}
= -\frac{a_2}{a_1+\frac{\rho}{\rho+\lambda_2}},
\eeq
and that
\beq
\label{2}
\displaystyle -\frac{\frac{\rho}{\rho+\lambda_2}+a_4}{a_3}
= \frac{2\lambda_1}{\alpha_3\alpha_4\sigma_1^2(\alpha_4+\alpha_3)}\Big[1+\frac{\alpha_3\Phi_1(\alpha_3)-\alpha_4\Phi_1(\alpha_4)}{\lambda_1(\alpha_4 -\alpha_3)}\Big].
\eeq
Then recalling that $x^*_1(y)-\theta(y)=z^*_1$, by \eqref{w1convex-2-a}, \eqref{1} and \eqref{2} we obtain
\begin{equation}
\label{ess-2}
\frac{\sigma_1^2(\alpha_3+\alpha_4)}{\sigma_1^2\alpha_3\alpha_4+2\rho}
\leq  x_1^*(y)-\theta(y)
\leq \frac{2\lambda_1}{\alpha_3\alpha_4\sigma_1^2(\alpha_4+\alpha_3)}\left[1+\frac{\alpha_3\Phi(\alpha_3)-\alpha_4\Phi(\alpha_4)}{\lambda_1(\alpha_4 -\alpha_3)}\right].
\end{equation}
By using the inequality on the left hand-side of \eqref{ess-2} in \eqref{w2x1-bis}, and the inequality on the right hand-side of \eqref{ess-2} in \eqref{w2xx1-bis}, we find $w(x^*_1(y),2;y) \geq x^*_1(y)-\theta(y)$ and $w_x(x^*_1(y),2;y) \leq 1$, respectively.
\vspace{0.25cm}


\emph{Step 4.}\, We now show that $w(x,2;y)\geq x-\theta(y)$ for $x < x^*_2(y)$ (and therefore for any $x\in \mathbb{R}$ due to the second of \eqref{FBP-1-b}). 

On $x \in (-\infty, x^*_1(y)) \cup (x^*_1(y), x^*_2(y))$ one has from \eqref{FBP-1-a}
\beq
\label{check-1}
\frac{1}{2}\sigma_2^2 w_{xx}(x,2;y) + \lambda_2(w(x,1;y) - w(x,2;y)) - \rho w(x,2;y) =0.
\eeq
Setting $\widehat{w}(x,i;y):=w(x,i;y)-(x-\theta(y))$, $i=1,2$, it follows that on $(-\infty, x^*_1(y)) \cup (x^*_1(y), x^*_2(y))$
\beq
\label{check-2}
\frac{1}{2}\sigma_2^2\widehat{w}_{xx}(x,2;y) + \lambda_2(\widehat{w}(x,1;y) - \widehat{w}(x,2;y)) - \rho \widehat{w}(x,2;y) = \rho(x-\theta(y)).
\eeq
We now show that $\widehat{w}(x,2;y) \geq 0$ separately in the two cases: \emph{(i)} $x \in (-\infty, x^*_1(y))$; and \emph{(ii)} $x \in (x^*_1(y), x^*_2(y))$. 
\vspace{0.12cm}

\emph{(i)}\ For $x \in (-\infty, x^*_1(y))$ we can differentiate \eqref{check-2} once more with respect to $x$ so to obtain
$$
\frac{1}{2}\sigma_2^2\widehat{w}_{xxx}(x,2;y) + \lambda_2(\widehat{w}_x(x,1;y) - \widehat{w}_x(x,2;y)) - \rho \widehat{w}_x(x,2;y) = \rho.
$$
Setting $\tau_1:=\inf\{t\geq 0: (X,\varepsilon) \notin \mathcal{D}_1\}$ $\mathbb{P}_{(x,i)}$-a.s., where $\mathcal{D}_1:=\{(x,i) \in \mathbb{R} \times \{1,2\}: x < x^*_1(y)\}$, an application of It\^o's formula (possibly with a standard localization argument) leads to
\begin{eqnarray}
\label{check-3}
\hspace{-0.5cm} \widehat{w}_{x}(x,2;y)& \hspace{-0.25cm} =  \hspace{-0.25cm} & \mathbb{E}_{(x,i)}\bigg[e^{-\rho \tau_1}\widehat{w}_x(X_{\tau_1}, \varepsilon_{\tau_1};y) - \int_0^{\tau_1} e^{-\rho s}\rho ds \bigg] \leq \mathbb{E}_{(x,i)}\Big[e^{-\rho \tau_1}\widehat{w}_x(X_{\tau_1}, \varepsilon_{\tau_1};y)\Big] \nonumber \\
& \hspace{-0.25cm} =  \hspace{-0.25cm} & \mathbb{E}_{(x,i)}\Big[e^{-\rho \tau_1}\widehat{w}_x(X_{\tau_1}, \varepsilon_{\tau_1};y)\mathds{1}_{\{\varepsilon_{\tau_1} = 1\}}\Big] + \mathbb{E}_{(x,i)}\Big[e^{-\rho \tau_1}\widehat{w}_x(X_{\tau_1}, \varepsilon_{\tau_1};y)\mathds{1}_{\{\varepsilon_{\tau_1} = 2\}}\Big],
\end{eqnarray}
for any $x < x^*_1(y)$. 

Recall now that $\widehat{w}_{x}(x^*_1(y),1;y) = {w}_{x}(x^*_1(y),1;y) - 1 =0$, and that by \emph{Step 3} $\widehat{w}_{x}(x^*_1(y),2;y) = {w}_{x}(x^*_1(y),2;y) - 1\leq 0$. Then the fact that $\tau_1 < +\infty$ $\mathbb{P}_{(x,i)}$-a.s.\ (by the recurrence property of $(X,\varepsilon)$; see (i) of Theorem 4.4 of \cite{ZhuYin} with $k>0$, $\alpha \in (0,1)$, $c_1=c_2$ therein) allows to conclude from \eqref{check-3} that $\widehat{w}_{x}(x,2;y) \leq 0$ for any $x < x^*_1(y)$. In turn this implies $w(x,2;y) \geq x-\theta(y)$ for any $x < x^*_1(y)$ since $w(x^*_1(y),2;y) \geq x^*_1(y)-\theta(y)$ again by the results of \emph{Step 3}.
\vspace{0.12cm}

\emph{(ii)}\ Take now $x \in (x^*_1(y), x^*_2(y))$ and define $\tau_{1,2}:=\inf\{t\geq 0: (X,\varepsilon) \notin \mathcal{D}_{1,2}\}$ $\mathbb{P}_{(x,i)}$-a.s., where $\mathcal{D}_{1,2}:=\{(x,i) \in \mathbb{R} \times \{1,2\}: x^*_1(y) < x < x^*_2(y)\}$.  By arguments similar to those employed in \emph{(i)}, but now using that $\widehat{w}_{x}(x^*_2(y),2;y) = 0$ and $\widehat{w}_{x}(x^*_1(y),2;y) \leq 0$ (cf.\ \emph{Step 3}), and that $\widehat{w}_x(x^*_2(y),1;y) = 0 = \widehat{w}_x(x^*_1(y),1;y)$ by construction, we obtain $\widehat{w}_x(x,2;y) \leq 0$ for any $x \in (x^*_1(y), x^*_2(y))$.
Hence $\widehat{w}(x,2;y) \geq 0$ for any $x \in (x^*_1(y), x^*_2(y))$ since $\widehat{w}(x^*_2(y),2;y) = 0$. 
\vspace{0.12cm}

By combining \emph{(i)} and \emph{(ii)} we have thus proved that ${w}(x,2;y) \geq x - \theta(y)$ for any $x \in (-\infty, x^*_1(y)) \cup (x^*_1(y), x^*_2(y))$. However, we already know by \emph{Step 3} that ${w}(x^*_1(y),2;y) \geq x^*_1(y) - \theta(y)$, and therefore we can conclude that $w(x,2;y)\geq x-\theta(y)$ for any $x < x^*_2(y)$.
\vspace{0.25cm}

Steps \emph{2}, \emph{3} and \emph{4} above show that $w(x,i;y) \geq x -\theta(y)$ for $x \in \mathbb{R}$ and $i=1,2$. We now turn to prove that one also has $\tfrac{1}{2}\sigma_i^2 w_{xx}(x,i;y)-\rho w(x,i;y) + \lambda_i(w(x,3-i;y)-w(x,i;y)) \leq 0$ for a.e.\ $x \in \mathbb{R}$ and $i=1,2$.
\vspace{0.25cm}

\emph{Step 5.}\,\emph{(i)}\ We start by showing that 
\beq
\label{ineq-1}
\tfrac{1}{2}\sigma_2^2 w_{xx}(x,2;y)-\rho w(x,2;y) + \lambda_2(w(x,1;y)-w(x,2;y)) \leq 0
\eeq
for a.e.\ $x \in \mathbb{R}$. This is true with equality for any $x < x^*_2(y)$ by construction. For $x > x^*_2(y)$ we have $w(x,1;y)=x-\theta(y)=w(x,2;y)$, so that \eqref{ineq-1} reads $- \rho (x-\theta(y)) \leq 0$. But now the latter inequality holds since $\rho > 0$ and $x^*_2(y) > \theta(y)$ by Corollary \ref{cor:boundary-system}.
\vspace{0.12cm}

\emph{(ii)}\ We now check that one also has
\beq
\label{ineq-2}
\tfrac{1}{2}\sigma_1^2 w_{xx}(x,1;y)-\rho w(x,1;y) + \lambda_1(w(x,2;y)-w(x,1;y)) \leq 0
\eeq
for a.e.\ $x \in \mathbb{R}$. Again, it suffices to show that the previous is true for $x > x^*_1(y)$, as it is verified with equality by construction on $(-\infty,x^*_1(y))$. 

If $x > x^*_2(y)$ then $w(x,2;y)= x - \theta(y)=w(x,1;y)$ and \eqref{ineq-2} holds since $\rho > 0$ and $x^*_2(y) > \theta(y)$ by Corollary \ref{cor:boundary-system}. 

To complete the proof we consider the case $x\in (x_1^*(y), x^*_2(y))$. On such an interval we have again $w(x,1;y)=x-\theta(y)$, and therefore \eqref{ineq-2} is verified on $(x_1^*(y), x^*_2(y))$ if 
\beq
\label{ineq-3}
w(x,2;y) \leq \frac{\rho + \lambda_1}{\lambda_1} w(x,1;y).
\eeq
In \emph{Step 4} we have shown that $w_x(x,2;y) - 1 \leq 0$ for any $x\in (x_1^*(y), x^*_2(y))$, from which one has 
\begin{eqnarray*}
& w(x,2;y) - w(x,1;y) = w(x,2;y) - (x - \theta(y)) \leq w(x_1^*(y),2;y) - (x_1^*(y) - \theta(y)) \nonumber \\
& = w(x_1^*(y),2;y) - w(x_1^*(y),1;y),
\end{eqnarray*}
where the fact that $w(x,1;y) = x - \theta(y)$ for any $x \geq x_1^*(y)$ has been used. Therefore on $(x_1^*(y), x^*_2(y))$
\beq
\label{ineq-4}
w(x,2;y)\leq w(x_1^*(y),2;y)-w(x_1^*(y),1;y)+ w(x,1;y),
\eeq

However, by convexity of $w(\cdot,1;y)$ proved in \emph{Step 2} one has 
$$-\rho w(x,1;y) + \lambda_1(w(x,2;y)-w(x,1;y)) \leq \tfrac{1}{2}\sigma_1^2 w_{xx}(x,1;y)-\rho w(x,1;y) + \lambda_1(w(x,2;y)-w(x,1;y)) = 0$$ 
for any $x < x^*_1(y)$, and this yields 
\beq
\label{eq:ineq-5}
w(x,2;y) \leq \frac{\rho+\lambda_1}{\lambda_1} w(x,1;y),\quad x < x^*_1(y).
\eeq
Then, taking limits as $x \uparrow x^*_1(y)$ we get from \eqref{eq:ineq-5} and continuity of $w(\cdot,i;y)$
\beq
\label{eq:ineq-5-bis}
w(x_1^*(y),2;y) \leq \frac{\rho+\lambda_1}{\lambda_1} w(x_1^*(y),1;y),
\eeq
and we conclude from \eqref{ineq-4} and \eqref{eq:ineq-5-bis} that for any $x \in (x_1^*(y), x^*_2(y))$
$$
w(x,2;y)\leq \frac{\rho+\lambda_1}{\lambda_1} w(x^*_1(y),1;y)-w(x^*_1(y),1;y) + w(x,1;y)\leq \frac{\rho+\lambda_1}{\lambda_1}w(x,1;y),
$$
where the fact that $w(x^*_1(y),1;y) = x^*_1(y)-\theta(y) \leq (x-\theta(y)) =w(x,1;y)$ for any $x >  x^*_1(y)$ implies the last step. Hence \eqref{ineq-3} holds on $(x_1^*(y), x^*_2(y))$, and therefore also \eqref{ineq-2} is satisfied on that interval. This completes the proof.
\ep

\vspace{0.25cm}

\noindent\emph{Proof of Theorem \ref{thm:verifying}}
\vspace{0.15cm}

\emph{Step 1.} Fix $(x,i) \in \mathbb{R} \times \{1,2\}$, let $\tau$ be an arbitrary $\mathbb{P}_{(x,i)}$-a.s.\ finite stopping time, and set $\tau_R:=\inf\{t\geq 0: X_t \notin (-R,R)\}$ $\mathbb{P}_{(x,i)}$-a.s.\ for $R>0$. Then, let $0 \leq \eta_1 < \eta_2 < ... < \eta_{N)} \leq \tau \wedge \tau_R$ be the random times of jumps of $\varepsilon$ in the interval $[0,\tau \wedge \tau_R)$ (clearly, the number $N$ of these jumps is random as well)  and, given the regularity of $w(\cdot,i;y)$ for any $i=1,2$ (cf.\ Theorem \ref{candidate-w}), apply It\^o-Tanaka's formula (see, e.g., \cite{RevuzYor}, Chapter VI, Proposition 1.5, Corollary 1.6 and following Remarks) between consecutive jumps of $\varepsilon$ from time $0$ up to time $\tau \wedge \tau_R$. Piecing together all the terms as in the proof of Lemma 3 at p.\ 104 of \cite{Sk} (see also Lemma 2.4 and its idea of proof in \cite{YinXi}) we find
\begin{eqnarray}
\label{verifico-1}
w(x, i;y) & \hspace{-0.25cm} = \hspace{-0.25cm} & \mathbb{E}_{(x,i)}\bigg[e^{-\rho (\tau\wedge \tau_R)} w(X_{\tau\wedge \tau_R}, \varepsilon_{\tau\wedge \tau_R};y) - \int_0^{\tau\wedge \tau_R}e^{-\rho s}(\mathcal{G}-\rho)w(X_s, \varepsilon_s; y)ds\bigg] \nonumber \\
& \hspace{-0.25cm} \geq \hspace{-0.25cm} & \mathbb{E}_{(x,i)}\Big[e^{-\rho(\tau\wedge \tau_R)} w(X_{\tau\wedge \tau_R},\varepsilon_{\tau\wedge \tau_R}; y)\Big] \geq \mathbb{E}_{(x,i)}\Big[e^{-\rho (\tau\wedge \tau_R)}(X_{\tau \wedge \tau_R}-\theta(y))\Big].
\end{eqnarray}
In \eqref{verifico-1} we have used that $w$ solves free-boundary problem \eqref{FBP-1-a}-\eqref{FBP-1-c} (cf.\ Theorem \ref{candidate-w}), and the fact that the stochastic integral over the interval $[0,\tau\wedge \tau_R)$ vanishes under expectation since $w_x$ is bounded for $(x,i,y) \in [-R,R] \times \{1,2\} \times [0,1]$. 

But now $\{e^{-\rho (\tau\wedge \tau_R)} X_{\tau \wedge \tau_R},\, R>0\}$ is a $\mathbb{P}_{(x,i)}$-uniformly integrable family by Lemma \ref{lemma:new} in Appendix \ref{app}, hence observing that if $R\uparrow \infty$ one has $\tau \wedge \tau_R \uparrow \tau$ a.s.\ by regularity of $(X,\varepsilon)$ (cf.\ \cite{ZhuYin}, Section 3.1), we can take limits as $R\uparrow \infty$ in \eqref{verifico-1}, invoke Vitali's convergence theorem and obtain 
$$w(x, i;y) \geq \mathbb{E}_{(x,i)}\Big[e^{-\rho \tau}(X_{\tau}-\theta(y))\Big].$$
Since $\tau$ was arbitrary, $ w(x, i ;y)\geq \sup_{\tau \geq 0}\mathbb{E}_{(x,i)}[e^{-\rho \tau}(X_{\tau}-\theta(y))\Big] = u(x,i;y)$.
\vspace{+8pt}

\emph{Step 2.} To prove the reverse inequality, i.e.\ $w(x,i;y) \leq u(x,i;y)$, take $\tau=\tau^*$, in the previous arguments and notice that one has $(\mathcal{G}-\rho)w(x, i; y) =0$ on $\mathcal{C}$. Then taking limits as $R\uparrow \infty$ one finds
\begin{eqnarray}
\label{taustar0}
w(x,i;y) &\hspace{-0.25cm} = \hspace{-0.25cm}& \mathbb{E}_{(x,i)}\Big[e^{-\rho \tau^*}w(X_{\tau^*},\varepsilon_{\tau^*}; y)\Big] = \mathbb{E}_{(x,i)}\Big[e^{-\rho \tau^*}(X_{\tau^*}-\theta(y))\Big],
\end{eqnarray}
where the last equality follows from the fact that $\tau^* < +\infty$ $\mathbb{P}_{(x,i)}$-a.s.\ by recurrence of $(X, \varepsilon)$ (cf.\ Theorem 4.4 in \cite{ZhuYin}). Therefore $w(x,i;y) \leq u(x,i;y)$, whence $w(x,i;y) = u(x,i;y)$ and optimality of $\tau^*$.
\ep

\section{Some Auxiliary Results}
\label{app}
\renewcommand{\theequation}{B-\arabic{equation}}

\begin{lemma}
\label{4thordereq}
For $i=1,2$ and $\alpha \in \mathbb{R}$, let $\Phi_i(\alpha):=-\frac{1}{2}\sigma_i^2\alpha^2+\rho+\lambda_i$. Then there exist unique $\alpha_1 < \alpha_2 <0 <\alpha_3 <\alpha_4$ satisfying the fourth-order equation 
\beq
\label{4th}
\Phi_1(\alpha)\Phi_2(\alpha) - \lambda_1\lambda_2=0.
\eeq
\end{lemma}
\begin{proof}
We provide here a proof of this claim in our setting for the sake of completeness (see also \cite{Guo01}, Remark 2.1, and \cite{CadSot}, Lemma 3.1 for related results). 
Using the definition of $\Phi_i$, $i=1,2$, equation \eqref{4th} reads
$$\frac{1}{4}\sigma_1^2\sigma_2^2\alpha^4 - \Big[\frac{1}{2}\sigma_1^2(\rho + \lambda_2) + \frac{1}{2}\sigma_2^2(\rho + \lambda_1)\Big]\alpha^2 + (\rho + \lambda_1)(\rho + \lambda_2) - \lambda_1\lambda_2=0,$$
and letting
$$a_o:=\frac{1}{4}\sigma_1^2\sigma_2^2, \quad b_o:=\frac{1}{2}\sigma_1^2(\rho + \lambda_2) + \frac{1}{2}\sigma_2^2(\rho + \lambda_1), \quad c_o:=(\rho + \lambda_1)(\rho + \lambda_2) - \lambda_1\lambda_2,$$
one can check that
$$b_o^2 - 4 a_o c_o = \Big[\frac{\sigma_1^2(\rho + \lambda_2) - \sigma_2^2(\rho + \lambda_1)}{2}\Big]^2 + \lambda_1\lambda_2\sigma_1^2\sigma_2^2 > 0.$$
Hence there exists two solutions $\beta_1$ and $\beta_2$ to the second-order equation $a_o \beta^2 - b_o\beta + c_o =0$, and they are such that $0 < \beta_2 < \beta_1$ since $a_o c_o > 0$. It thus follows that
$$-\alpha_1: = \sqrt{\beta_1} =:\alpha_4, \qquad -\alpha_2: = \sqrt{\beta_2} =:\alpha_3$$
solve \eqref{4th} and satisfy $\alpha_1 < \alpha_2 <0 <\alpha_3 <\alpha_4$. 

\end{proof}

\begin{lemma}
\label{lemm:valuesai}
Let $a_i$, $i=1,2,3,4$, be defined as in \eqref{a1-a4}. Then one has $a_1 < 0$, $a_2>0$, $a_3 < 0$ and $a_4>0$.
\end{lemma}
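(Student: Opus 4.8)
The plan is to reduce all four sign claims to two facts about the characteristic roots, namely $\Phi_1(\alpha_3)>0$ and $\Phi_1(\alpha_4)<0$, together with the elementary data already at hand: $0<\alpha_3<\alpha_4$ and $\rho,\lambda_1,\lambda_2,\sigma_1>0$.

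First I would locate $\alpha_3$ and $\alpha_4$ relative to the positive zero $\bar\alpha_1:=\sqrt{2(\rho+\lambda_1)/\sigma_1^2}$ of the downward parabola $\alpha\mapsto\Phi_1(\alpha)$. Set $g(\alpha):=\Phi_1(\alpha)\Phi_2(\alpha)-\lambda_1\lambda_2$, a fourth-order polynomial with positive leading coefficient $\tfrac14\sigma_1^2\sigma_2^2$ and, by hypothesis, four real roots $\alpha_1<\alpha_2<0<\alpha_3<\alpha_4$; hence the sign of $g$ alternates across consecutive roots, being positive for large $|\alpha|$. Since $g(0)=(\rho+\lambda_1)(\rho+\lambda_2)-\lambda_1\lambda_2=\rho^2+\rho(\lambda_1+\lambda_2)>0$ and $0\in(\alpha_2,\alpha_3)$, this forces $g<0$ precisely on $(\alpha_1,\alpha_2)\cup(\alpha_3,\alpha_4)$. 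Now $g(\bar\alpha_1)=\Phi_1(\bar\alpha_1)\Phi_2(\bar\alpha_1)-\lambda_1\lambda_2=-\lambda_1\lambda_2<0$ while $\bar\alpha_1>0$, so necessarily $\bar\alpha_1\in(\alpha_3,\alpha_4)$, i.e.\ $\alpha_3<\bar\alpha_1<\alpha_4$. Because $\Phi_1$ is strictly positive on $(-\bar\alpha_1,\bar\alpha_1)$ and strictly negative outside $[-\bar\alpha_1,\bar\alpha_1]$, I conclude $\Phi_1(\alpha_3)>0$ and $\Phi_1(\alpha_4)<0$. I expect this to be the only part of the argument requiring more than arithmetic; it can alternatively be quoted from \cite{Guo01}, Remark 2.1, or \cite{CadSot}, Lemma 3.1.

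Given these signs the remaining work is routine. For $a_2$: the numerator $\Phi_1(\alpha_3)-\Phi_1(\alpha_4)$ is a positive number minus a negative one, hence positive, and the denominator $\lambda_1(\alpha_4-\alpha_3)$ is positive, so $a_2>0$. For $a_3$: one recognizes $a_3=-\alpha_3\alpha_4\,a_2$, and since $\alpha_3\alpha_4>0$ this gives $a_3<0$. For $a_4$: both summands $\alpha_3\Phi_1(\alpha_3)$ and $-\alpha_4\Phi_1(\alpha_4)$ are positive, so $\tfrac{\alpha_3\Phi_1(\alpha_3)-\alpha_4\Phi_1(\alpha_4)}{\lambda_1(\alpha_4-\alpha_3)}>0$, and adding the positive term $\tfrac{\lambda_2}{\rho+\lambda_2}$ yields $a_4>0$.

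Finally, for $a_1$ I would simplify its numerator using $\Phi_1(\alpha)=-\tfrac12\sigma_1^2\alpha^2+\rho+\lambda_1$: a short expansion gives the identity $\alpha_4\Phi_1(\alpha_3)-\alpha_3\Phi_1(\alpha_4)=(\alpha_4-\alpha_3)\bigl(\tfrac12\sigma_1^2\alpha_3\alpha_4+\rho+\lambda_1\bigr)$, so that $a_1=-\tfrac{1}{\lambda_1}\bigl(\tfrac12\sigma_1^2\alpha_3\alpha_4+\rho+\lambda_1\bigr)+\tfrac{\rho}{\rho+\lambda_2}$. Since $\tfrac12\sigma_1^2\alpha_3\alpha_4>0$, the first term is strictly below $-\tfrac{\rho+\lambda_1}{\lambda_1}<-1$, while $\tfrac{\rho}{\rho+\lambda_2}<1$; hence $a_1<0$. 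Collecting the four inequalities completes the argument.
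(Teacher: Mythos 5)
Your proof is correct, and it parallels the paper for $a_1$, $a_2$, $a_3$ but takes a genuinely different route for $a_4$. For $a_1$ you use the same algebraic identity $\alpha_4\Phi_1(\alpha_3)-\alpha_3\Phi_1(\alpha_4)=(\alpha_4-\alpha_3)\bigl(\tfrac12\sigma_1^2\alpha_3\alpha_4+\rho+\lambda_1\bigr)$ as the paper, and for $a_2$, $a_3$ both you and the paper reduce to the monotonicity of $\Phi_1$ on $(0,\infty)$ (though you phrase it via the stronger fact that $\Phi_1$ changes sign between $\alpha_3$ and $\alpha_4$). The real divergence is in $a_4$: the paper factors $\alpha_3\Phi_1(\alpha_3)-\alpha_4\Phi_1(\alpha_4)=(\alpha_4-\alpha_3)\bigl[\tfrac12\sigma_1^2(\alpha_3\alpha_4+\alpha_3^2+\alpha_4^2)-(\rho+\lambda_1)\bigr]$ and then invokes Vieta's formula for the quartic to obtain $\alpha_3^2+\alpha_4^2=\tfrac{2\sigma_1^2(\rho+\lambda_2)+2\sigma_2^2(\rho+\lambda_1)}{\sigma_1^2\sigma_2^2}$, from which the positivity drops out after simplification. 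You instead first prove the root-location facts $\Phi_1(\alpha_3)>0$ and $\Phi_1(\alpha_4)<0$ by a sign-alternation argument for $g(\alpha)=\Phi_1(\alpha)\Phi_2(\alpha)-\lambda_1\lambda_2$ evaluated at the positive zero of $\Phi_1$, and then $a_4>0$ is immediate since both summands in its numerator are positive. Your argument is arguably cleaner and avoids the Vieta computation; as a bonus, it also supplies a self-contained proof of the sign facts $\Phi_1(\alpha_3)>0$, $\Phi_1(\alpha_4)<0$, which the paper asserts elsewhere (e.g.\ in Step~2 of the proof of Theorem~\ref{candidate-w}, where it writes ``being $\Phi_1(\alpha_4)<0$ by direct check'') without spelling out the reasoning. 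The paper's approach, conversely, keeps the Lemma self-contained without needing any location information about $\alpha_3,\alpha_4$ relative to the zero of $\Phi_1$. Both are sound.
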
  
\begin{proof}
Noticing that $\Phi_i(\alpha)=-\frac{1}{2}\sigma_i^2 \alpha^2+\rho+\lambda_i$, $i=1,2$, is a strictly decreasing function of $\alpha$,  the fact that $\alpha_3 < \alpha_4$ imply $a_2>0$ and $a_3<0$.

As for $a_1$, recall that from \eqref{a1-a4} one has
\beq
\label{a1}
a_1=-\frac{\alpha_4\Phi_1(\alpha_3)-\alpha_3\Phi_1(\alpha_4)}{\lambda_1(\alpha_4-\alpha_3)}
+\frac{\lambda_2}{\rho+\lambda_2}.
\eeq
By using the explicit expression of $\Phi_i(\alpha)$, $i=1,2$, direct calculations lead to
\begin{equation}
\label{Phis-1}
\alpha_4\Phi_1(\alpha_3)-\alpha_3\Phi_1(\alpha_4)=
\Big(\frac{1}{2}\sigma_1^2\alpha_3\alpha_4+\rho+\lambda_1\Big)(\alpha_4-\alpha_3),
\end{equation}
which substituted into \eqref{a1} yields
\beq
\label{a1final}
a_1=-\frac{\frac{1}{2}\sigma_1^2\alpha_3\alpha_4+\rho+\lambda_1}{\lambda_1} + \frac{\lambda_2}{\rho+\lambda_2} < -\frac{\frac{1}{2}\sigma_1^2\alpha_3\alpha_4+\rho}{\lambda_1}<0.
\eeq

We conclude showing that $a_4>0$. It is matter of simple algebra to show that
\beq
\label{Phis-2}
\alpha_3\Phi_1(\alpha_3)-\alpha_4\Phi_1(\alpha_4)=
(\alpha_4-\alpha_3)\Big[\frac{1}{2}\sigma_1^2(\alpha_3\alpha_4+\alpha_3^2+\alpha_4^2)-(\rho + \lambda_1)\Big],
\eeq
which used in the expression for $a_4$ of \eqref{a1-a4} allows to write
\beq
\label{a4}
a_4=\frac{\frac{1}{2}\sigma_1^2(\alpha_3\alpha_4+\alpha_3^2+\alpha_4^2)-(\rho +\lambda_1)}{\lambda_1}
+\frac{\lambda_2}{\rho+\lambda_2}.
\eeq
Since $\alpha_3$ and $\alpha_4$ solve $\Phi_1(\alpha)\Phi_2(\alpha)=\lambda_1\lambda_2$, by Vieta's formulas we deduce that
\beq
\label{Vietaalpha34}
\alpha_3^2+\alpha_4^2=\frac{2\sigma_1^2(\rho+\lambda_2)+2\sigma_2^2
(\rho+\lambda_1)}{\sigma_1^2\sigma_2^2}.
\eeq
Noticing that $\alpha_3\alpha_4>0$, and using \eqref{Vietaalpha34} in \eqref{a4} we obtain
$$a_4>\frac{\frac{1}{2}\sigma_1^2(\alpha_3^2+\alpha_4^2)- (\rho +\lambda_1)}{\lambda_1}
>\frac{1}{\lambda_1}\Big[\frac{\sigma_1^2\sigma_2^2
(\rho+\lambda_1)}{\sigma_1^2\sigma_2^2}- (\rho + \lambda_1)\Big]=0,$$
thus completing the proof.
\end{proof}


\begin{lemma}
\label{lemma:new}
Fix $(x,i) \in \mathbb{R} \times \{1,2\}$, let $\tau$ be an arbitrary $\mathbb{P}_{(x,i)}$-a.s.\ finite stopping time, and for $R>0$ set $\tau_R:=\inf\{t\geq 0: X_t \notin (-R,R)\}$ $\mathbb{P}_{(x,i)}$-a.s. Then the family of random variables $\{e^{-\rho(\tau\wedge \tau_R)} X_{\tau\wedge \tau_R},\, R>0\}$ is $\mathbb{P}_{(x,i)}$-uniformly integrable. 
\end{lemma}
\begin{proof}
By an integration by parts we have due to \eqref{dyn:X}
$$e^{-\rho(\tau\wedge \tau_R)} X_{\tau\wedge \tau_R} = x - \int_0^{\tau\wedge \tau_R} \rho e^{-\rho s} X_s ds + \int_0^{\tau\wedge \tau_R} e^{-\rho s} \sigma_{\varepsilon_s} dW_s.$$
On the one hand, by H\"older's inequality and It\^o's isometry one has
\begin{eqnarray}
\label{boundUI-a}
\mathbb{E}_{(x,i)}\bigg[\int_0^{\infty} \rho e^{-\rho s} |X_s| ds\bigg] 
& \hspace{-0.25cm} \leq \hspace{-0.25cm} & |x| + \int_0^{\infty} \rho e^{-\rho s}\mathbb{E}_{(x,i)}\bigg[\Big|\int_0^s \sigma_{\varepsilon_u}dW_u\Big|^2\bigg]^{\frac{1}{2}} ds\\
& \hspace{-0.25cm} \leq \hspace{-0.25cm} & |x| + (\sigma_1^2 \vee \sigma_2^2)^{\frac{1}{2}}\int_0^{\infty} \rho \sqrt{s} e^{-\rho s} ds < \infty, \nonumber
\nonumber
\end{eqnarray}
for some $K>0$. Hence $\int_0^{\infty} \rho e^{-\rho s} |X_s| ds \in L^1(\Omega, \mathbb{P}_{(x,i)})$.
On the other hand, the continuous martingale $\{\int_0^{t} e^{-\rho s} \sigma_{\varepsilon_s} dW_s,\, t\geq 0\}$ is bounded in $L^2(\Omega, \mathbb{P}_{(x,i)})$ since $\mathbb{E}_{(x,i)}[|\int_0^{t} e^{-\rho s} \sigma_{\varepsilon_s} dW_s|^2] \leq (\sigma_1^2 \vee \sigma_2^2) \int_0^{\infty} e^{-2\rho s} ds$, and therefore (cf.\ \cite{RevuzYor}, Chapter IV, Proposition 1.23)
$$\mathbb{E}_{(x,i)}\bigg[\Big|\int_0^{\tau\wedge \tau_R} e^{-\rho s} \sigma_{\varepsilon_s} dW_s\Big|^2\bigg] = \mathbb{E}_{(x,i)}\bigg[\int_0^{\tau\wedge \tau_R} e^{-2\rho s} \sigma^2_{\varepsilon_s} ds \bigg] \leq (\sigma_1^2 \vee \sigma_2^2) \int_0^{\infty} e^{-2\rho s} ds, \quad R > 0.$$
Hence, the family $\{\big|\int_0^{\tau\wedge \tau_R} e^{-\rho s} \sigma_{\varepsilon_s} dW_s\big|,\, R>0\}$ is bounded in $L^2(\Omega, \mathbb{P}_{(x,i)})$ as well, thus uniformly integrable. This fact, together with \eqref{boundUI-a}, in turn imply uniform integrability of the family $\{e^{-\rho(\tau\wedge \tau_R)} X_{\tau\wedge \tau_R},\, R>0\}$.
\end{proof}


\begin{lemma}
\label{lemma:UI}
Let $(x,y,i)\in \mathcal{O}$ and denote by $\mathcal{T}$ the set of $\mathbb{F}$-stopping times. Then for any $\nu \in \mathcal{A}_y$, the families of random variables
$$\bigg\{\int_0^{\tau} e^{-\rho u}(X_u-c)d\nu_u,\,\,\,\tau\in \mathcal{T}\bigg\}\qquad \text{and} \qquad \bigg\{\int_0^{\tau} e^{-\rho u} f(Y^{\nu}_u) du,\,\,\,\tau\in \mathcal{T}\bigg\}$$
are $\mathbb{P}_{(x,y,i)}$-uniformly integrable.
\end{lemma}
\begin{proof}
We prove the uniform integrability of the first family of random variables by showing that it is uniformly bounded in $L^2(\Omega,\mathbb{P}_{(x,y,i)})$. Let $\tau$ be any given and fixed stopping time of $\mathbb{F}$, take any $\nu\in \mathcal{A}_y$, and notice that an integration by parts leads to
\beq
\label{UI-1}
\int_0^{\tau} e^{-\rho u}(X_u -c)d\nu_u = e^{-\rho \tau}(X_{\tau} -c)\nu_{\tau} +  \int_0^{\tau} \rho e^{-\rho u}(X_u -c)\nu_u du - \int_0^{\tau} e^{-\rho u} \nu_u\sigma_{\varepsilon_u}dW_u,
\eeq
where \eqref{dyn:X} has been employed. However we also have
\beq
\label{UI-1-BIS}
e^{-\rho \tau}(X_{\tau} -c)\nu_{\tau} = \nu_{\tau}\bigg[x - c e^{-\rho \tau} - \int_0^{\tau} \rho e^{-\rho u}X_u du + \int_0^{\tau} e^{-\rho u} \sigma_{\varepsilon_u}dW_u\bigg].
\eeq

Denoting by $K>0$ a suitable constant possibly depending on $x$ and $y$, but not on $\tau$, and that may change from line to line, we obtain from \eqref{UI-1} and \eqref{UI-1-BIS}
\begin{eqnarray}
\label{UI-2}
&& \Big|\int_0^{\tau} e^{-\rho u}(X_u -c)d\nu_u\Big|^2 \leq K\Big[ 1 + \int_0^{\infty} \rho e^{-\rho u}|X_u|^2 du + \Big|\int_0^{\tau} e^{-\rho u} \sigma_{\varepsilon_u}dW_u\Big|^2 \nonumber \\
&& \hspace{0.5cm} + \Big|\int_0^{\tau} e^{-\rho u} \nu_u\sigma_{\varepsilon_u}dW_u\Big|^2\Big]  \leq K \Big[ 1 + \int_0^{\infty} \rho e^{-\rho u}\Big|\int_0^u e^{-\rho s} \sigma_{\varepsilon_s}dW_s\Big|^2 du   \\
&& \hspace{0.5cm} + \Big|\int_0^{\tau} e^{-\rho u}\sigma_{\varepsilon_u}dW_u\Big|^2 + \Big|\int_0^{\tau} e^{-\rho u} \nu_u\sigma_{\varepsilon_u}dW_u\Big|^2\Big],\nonumber 
\end{eqnarray}
where the boundedness of $\nu \in \mathcal{A}_y$ has been exploited. In \eqref{UI-2} Jensen's inequality has been used in the first step for the integrals with respect to $\rho e^{-\rho u}du$, whereas the last step employs \eqref{dyn:X}. Taking expectations in \eqref{UI-2}, using It\^o's isometry, and noticing that $\sigma_{\varepsilon_t}^2 \leq \sigma_1^2 \vee \sigma_2^2$ a.s.\ and that any admissible control is bounded by one, we obtain 
\beq
\label{UI-3}
\mathbb{E}_{(x,y,i)}\bigg[\Big|\int_0^{\tau} e^{-\rho u}(X_u -c)d\nu_u\Big|^2\bigg] \leq K\Big[ 1 + (\sigma_1^2 \vee \sigma_2^2) \int_0^{\infty} \rho e^{-\rho u} (1+u)\,du\Big],
\eeq
which in turn proves the first claim.

Uniform integrability of the second family follows by noticing that for any $\mathbb{F}$-stopping time $\tau$ and any $\nu \in \mathcal{A}_y$ we have
$$0 \leq \int_0^{\tau} e^{-\rho u}f(Y^{\nu}_u)du  \leq \int_0^{\infty} e^{-\rho u}f(1)du \leq \frac{f(1)}{\rho},$$
where we have used the fact that $f(\cdot)$ is nonnegative and increasing, and that $Y^{\nu}_t \leq 1$ a.s.
\end{proof}


\end{document}